\newtheorem{theorem}{Theorem}[section]
\newtheorem{lemma}[theorem]{Lemma}
\newtheorem{proposition}[theorem]{Proposition}
\newtheorem{corollary}[theorem]{Corollary}
\theoremstyle{definition}
\newtheorem{definition}[theorem]{Definition}
\newtheorem{condition}[theorem]{Condition}
\newtheorem{question}[theorem]{Question}
\theoremstyle{remark}
\newtheorem{remark}[theorem]{Remark}
\numberwithin{equation}{section}
\newcommand{\abs}[1]{\lvert#1\rvert}
\newcommand{\norm}[1]{\lvert\lvert#1\rvert\rvert}
\begin{document}

\title{Displacement energy of Lagrangian 3-spheres}

\author{Yuhan Sun}
\address{Department of Mathematics, Rutgers University, 110 Frelinghuysen Rd, Piscataway NJ 08854.}
\address{Department of Mathematics, Stony Brook University, Stony Brook, New York, 11794.}

\email{sun.yuhan@rutgers.edu}

\begin{abstract}
We estimate the displacement energy of Lagrangian 3-spheres in a symplectic 6-manifold $X$, by estimating the displacement energy of a one-parameter family $L_{\lambda}$ of Lagrangian tori near the sphere. The proof establishes a new version of Lagrangian Floer theory with cylinder corrections, which is motivated by the change of open Gromov-Witten invariants under the conifold transition. We also make observations and computations on the classical Floer theory by using the symplectic sum formula and Welschinger invariants.
\end{abstract}

\date{\today}

\maketitle


\tableofcontents

\section{Introduction}
Let $X$ be a closed symplectic manifold and $L$ be a closed Lagrangian submanifold. A classical problem in symplectic topology cares about the dynamic of $L$ under Hamiltonian isotopies. In particular $L$ is called \textit{nondisplaceable} if it cannot be separated from itself by any Hamiltonian diffeomorphism. That is,
$$
L\cap \phi\left(L\right) \neq \emptyset, \quad \forall \phi\in Ham\left(X, \omega\right).
$$
Otherwise $L$ is called \textit{displaceable}. For a displaceable Lagrangian submanifold, there is a notion of displacement energy to characterize how much effort one needs to displace it away. Let $H_{t}$ be a time-dependent Hamiltonian function on $X$ for $t\in[0, 1]$ and $\phi_{t}$ be the corresponding Hamiltonian isotopy. The Hofer length of $H_{t}$ is defined as
$$
\norm{H_{t}}_{X}= \int_{0}^{1} (\max_{X} H_{t}- \min_{X} H_{t}) dt
$$
and the displacement energy of $L$ is defined as
$$
\mathcal{E}_{L}= \inf\lbrace \norm{H_{t}}_{X}\mid L\cap \phi_{1}\left(L\right)= \emptyset\rbrace.
$$
If $L$ is nondisplaceable then $\mathcal{E}_{L}$ is defined to be infinity.

By the work of Gromov \cite{G} and Chekanov \cite{Ch1,Ch2,O}, the displacement energy is closely related to the least energy of a holomorphic disk with boundary on $L$. Later this relation has been extended by Fukaya-Oh-Ohta-Ono to the torsion part \cite{FOOO, FOOO4} of the Lagrangian Floer cohomology of $L$, possibly equipped with bounding cochains and deformed by bulk cycles. Hence it gives us finer estimates on the displacement energy. In this note we establish a new version of Lagrangian Floer cohomology counting more general bordered Riemann surfaces and study its torsion part. As an application we obtain some new estimates of the displacement energy of Lagrangian 3-spheres in a symplectic 6-manifold.

More precisely, this new version of Lagrangian Floer theory not only counts holomorphic strips with Lagrangian boundary conditions, but also counts holomorphic strips with one interior hole, where the interior hole is mapped to another reference Lagrangian submanifold. Usually this reference Lagrangian is a chosen Lagrangian 3-sphere. This extra counting of holomorphic cylinders enables us to use certain \textit{chains} to deform the Floer theory, generalizing the notion of bulk cycles.

Counting holomorphic cylinders between two non-intersecting Lagrangian submanifolds provides us a map between some quantum invariants of these two Lagrangian submanifolds. For an incomplete list, see \cite{BC2} and \cite{K} for some geometric applications. In our current setting, this Floer theory is motivated by various works around the \textit{conifold transition}, a surgery that replaces a Lagrangian 3-sphere by a holomorphic $\mathbb{C}P^{1}$, introduced by Smith-Thomas-Yau \cite{STY}. How geometric invariants change under this transition is an important question in the fields of symplectic topology and enumerative geometry. Particularly, some closed Gromov-Witten invariants with point-wise constraints are not preserved under this transition, unless one also takes the open Gromov-Witten invariants on $S^{3}$ into account. From this point of view, to compare the Lagrangian Floer theory of a Lagrangian away from the holomorphic $\mathbb{C}P^{1}$ in the resolved side, it is natural to consider the contributions of bordered curves with disconnected boundaries on both of the sphere $S^{3}$ and the Lagrangian. So here we realize this idea in a simple version, where both holomorphic strips and holomorphic strips with one interior hole attached on $S^{3}$ are counted. Similar philosophy already started to play an important role in the mirror symmetry ground. This article can be regarded as an application, maybe the first one, to symplectic topology.

However, the above philosophy often expects that the data of all genera should be considered, otherwise what one obtained is not an invariant. Therefore our baby theory only works modulo some energy. Recently, the open Gromov-Witten theory in $T^{*}S^{3}$ with all genera has been successfully related to knot-theoretic invariants by Ekholm-Shende \cite{ES}. It would be interesting to try to apply the techniques therein to define a full genus Floer theory, starting with the monotone Lagrangian torus in $T^{*}S^{3}$. Hopefully there will be a correspondence between open Gromov-Witten invariants with coefficients in skein modules and bulk-deformed open Gromov-Witten invariants. Then one may move further to toric compactifications or other general cases, to see how Lagrangian Floer theory and even Fukaya category change under the conifold transition.

\subsection{Main results}
Let $X$ be a closed symplectic 6-manifold with a Lagrangian 3-sphere $S$. We say $S$ is integrally homologically trivial if the inclusion map $i:H_{3}(S;\mathbb{Z})\rightarrow H_{3}(X;\mathbb{Z})$ is a trivial map. For an oriented Lagrangian submanifold $L$ in $X$, we will study holomorphic disks with boundary on $L$ and holomorphic cylinders with one boundary on $L$ and the other on $S$. The following condition is designed to make various moduli spaces behave nicely.

\begin{condition}\label{condition}
There exists a compatible almost complex structure $J$ and two positive numbers $E_{+}\geq E_{S}$ such that
\begin{enumerate}
\item all non-constant $J$-holomorphic disks on $L$, with energy less than $E_{+}$, have positive Maslov indices;
\item all Maslov two $J$-holomorphic disks on $L$, with energy less than $E_{+}$, are regular;
\item all non-constant $J$-holomorphic spheres, with energy less than $E_{+}$, have positive first Chern numbers.
\item all $J$-holomorphic disks on $S$, with energy less than $E_{S}$, are constant (this includes the case of a $J$-holomorphic sphere with one point attached on $S$);
\item all $J$-holomorphic cylinders with one boundary on $L$ and the other on $S$, with energy less than $E_{S}$ have positive Maslov index.
\end{enumerate}
\end{condition}

This condition is an open condition except item $(2)$. In this article, we expect that the transversality results of holomorphic curves can be obtained by using a specific $J$ satisfying Condition \ref{condition} via virtual perturbation \cite{FOOO, FOOO6}. Or we fix an open neighborhood of a $J$ satisfying $(1), (3), (4), (5)$ then use families of almost complex structures in this neighborhood via classical means, also see Remark \ref{div} and Remark \ref{analytic}.

Now for a triple $(X, S, L)$ indicated before, moreover suppose that there is a 4-chain $K$ in $X$ such that $\partial K=S$ and $K\cap L=\emptyset$. Note that $L$ is an orientable 3-manifold hence spin. Then by the work of Fukaya-Oh-Ohta-Ono \cite{FOOO}, there is an $A_{\infty}$-structure $\lbrace \mathfrak{m}_{k}\rbrace$ on the singular cohomology of $L$, modulo $T^{E_{+}}$. The main point of this article is to use the chain $K$ as a bulk deformation to deform $\lbrace \mathfrak{m}_{k}\rbrace$.

\begin{theorem}\label{construction}
Let $X$ be a closed symplectic 6-manifold with an integrally homologically trivial Lagrangian 3-sphere $S$, and let $L$ be an oriented Lagrangian submanifold of $X$ satisfying Condition \ref{condition}. For a 4-chain $K$ such that $\partial K=S$ and $K\cap L=\emptyset$, let
$$
\mathfrak{b}=w\cdot K, \quad w\in\Lambda_{0}, \quad v(w)>0
$$
and $E:=\min\lbrace E_{S}+ v(w), 2v(w), E_{+}\rbrace$. Then there is an $A_{\infty}$-structure $\lbrace \mathfrak{m}^{cy, \mathfrak{b}}_{k}\rbrace$ on the singular cohomology of $L$, with coefficients in $\Lambda_{0}\big/ T^{E}\cdot\Lambda_{0}$.
\end{theorem}

Here $\Lambda_{0}$ is the universal Novikov ring with a valuation $v$, see Section 2 for our conventions. This $A_{\infty}$-structure $\lbrace \mathfrak{m}^{cy, \mathfrak{b}}_{k}\rbrace$ is a deformation of the $A_{\infty}$-structure $\lbrace \mathfrak{m}_{k}\rbrace$ in \cite{FOOO}, deformed by the chain $K$ with Novikov ring coefficients. In particular, we count not only holomorphic disks but also holomorphic cylinders with boundaries on $L$ and $S$.

An example is that when both $X$ and $L$ are monotone, then the conditions can be achieved by taking $E_{+}=+\infty$ and $E_{S}$ as the minimal area of a symplectic disk with boundary on $S$.

Similar to the case of the usual $A_{\infty}$-structure, we can also use $\lbrace \mathfrak{m}^{cy, \mathfrak{b}}_{k}\rbrace$ to construct a Lagrangian Floer cohomology of $L$, deformed by the chain $K$. In Theorem \ref{construction}, the Lagrangian submanifold $L$ can be any closed oriented 3-manifold with a spin structure. Next we focus on the case where $L$ is a Lagrangian torus close to $S$, to get some geometric applications. We start with the local geometry near a Lagrangian 3-sphere. Let $S^{3}$ be a 3-sphere and $(T^{*}S^{3}, \omega_{0})$ be the total space of its cotangent bundle equipped with the standard symplectic form. It is known that there is a one-parameter family of Lagrangian tori $\lbrace L_{\lambda}\rbrace_{\lambda\in (0, +\infty)}$ in $(T^{*}S^{3}, \omega_{0})$ such that
\begin{enumerate}
\item $L_{\lambda}$ is monotone with a monotonicity constant $\lambda$ and has minimal Maslov number two;
\item $L_{\lambda}$ has nonzero Floer cohomology with certain weak bounding cochains, hence it is nondisplaceable in $T^{*}S^{3}$;
\item for any neighborhood of the zero section $S^{3}$, $L_{\lambda}$ is contained in this neighborhood if $\lambda$ is small enough.
\end{enumerate}
We will review the explicit construction in Section 3 following \cite{CKO} and \cite{CPU}, where they computed the Gromov-Witten disk potential of $L_{\lambda}$. Moreover, by the local study there is a compatible almost complex structure $J_{0}$ on $(T^{*}S^{3}, \omega_{0})$ such that
\begin{enumerate}
\item $J_{0}$ is cylindrical outside a large compact set;
\item all $J_{0}$-holomorphic disks on $L_{\lambda}$ with Maslov index two are regular;
\item the images of all $J_{0}$-holomorphic disks on $L_{\lambda}$ with Maslov index two are outside a neighborhood $V_{\lambda}$ of the zero section.
\end{enumerate}
A compactly supported small generic perturbation of $J_{0}$ also satisfies above properties.

Then let $S$ be a Lagrangian 3-sphere in a symplectic 6-manifold $X$ and $U$ be a Weinstein neighborhood of $S$ which is symplectomorphic to some disk cotangent bundle $(D_{r}T^{*}S^{3}, \omega_{0})$. A subfamily of $\lbrace L_{\lambda}\rbrace_{\lambda\in (0, +\infty)}$ sits in $U=D_{r}T^{*}S^{3}$ and one can ask whether $L_{\lambda}$ is globally nondisplaceable in $X$. Note that if $L_{\lambda}$ is nondisplaceable for all small $\lambda$ then the Lagrangian sphere $S$ is also nondisplaceable. We will use this approach to obtain some estimates of the displacement energy of $S$ by estimating the displacement energy of $L_{\lambda}$ near it.

The above idea is motivated by concrete examples in \cite{CKO} and \cite{P}. Let $F_{3}$ be the manifold of full flags in $\mathbb{C}^{3}$. When $F_{3}$ is equipped with a monotone symplectic form, a Lagrangian 3-sphere in $F_{3}$ with vanishing Floer cohomology was found in \cite{NNU}. Later in \cite{CKO} this Lagrangian sphere is shown to be nondisplaceable by showing the local one-parameter family of Lagrangian tori $L_{\lambda}$ near it has nontrivial Floer cohomology. When $F_{3}$ is equipped with a non-monotone symplectic form then the ``same'' Lagrangian 3-sphere is proved to be displaceable, see \cite{P}.

Both \cite{CKO} and \cite{P} use explicit geometric properties of $F_{3}$. And here we try to study a general theory without knowing much about the ambient symplectic manifold $X$. One difficulty is that for a general ambient symplectic manifold, there is no ``canonical'' ambient 4-cycle to deform the Floer cohomology to be non-zero. Locally in $T^{*}S^{3}$ there are only 4-chains with boundary as the zero section. Directly using these chains to deform will cause that some boundary operators do not have zero square, since the 4-chain has a codimension one boundary. Our strategy is to consider the moduli space of holomorphic cylinders to cancel this possible boundary effect, such that those 4-chains can be used as deformations.

Assuming $(1)-(3)$ in Condition \ref{condition}, the one-pointed open Gromov-Witten invariant $n_{\beta}$ is defined, for any Maslov two disk class $\beta\in\pi_{2}(X, L)$ with energy less than $E_{+}$. We consider the sequence
$$
\lbrace \beta_{k}\mid n_{\beta}\neq 0, E(\beta_{k})\leq E(\beta_{k+1})\rbrace_{k=1}^{\infty}
$$
of disk classes with Maslov index two, enumerated by their symplectic energy, see Figure 1. From the local study we know that $L_{\lambda}$ bounds four $J$-holomorphic disks with Maslov index two inside $U$, with same energy $E_{1,\lambda}$. There maybe other Maslov two holomorphic disks of which the images are not contained in $U$. We call them outside disk contributions. If $L_{\lambda}$ is near $S$, the local disk contributions are the first four elements in the above sequence. This is why we need $\lambda_{0}$ small in Theorem \ref{dis}. Now let $E_{5,\lambda}=E(\beta_{5})$ be the least energy of outside disk contributions. Then when $L_{\lambda}$ is close to $S$ we have that $E_{5,\lambda}\gg E_{1,\lambda}$.

As we mentioned before, the $A_{\infty}$-structure $\lbrace \mathfrak{m}^{cy, \mathfrak{b}}_{k}\rbrace$ is a deformation of the usual $A_{\infty}$-structure, by using holomorphic cylinder counting. Let $\beta$ be a class of cylinders with one end on $L$, the other end on $S$, with Maslov index two. Consider the moduli space of holomorphic cylinders $\mathcal{M}^{cy}_{1}(\beta; J)$ representing $\beta$, with one boundary marked point on the side of $L$. By Condition \ref{condition}, there is no disk bubble or sphere bubble off the cylinders, if $\omega(\beta)<E_{S}, E_{+}$. The only possible boundary of $\mathcal{M}^{cy}_{1}(\beta; J)$ is where the circle boundary of a cylinder on $S$ shrinking to a point. And we will glue $\mathcal{M}^{cy}_{1}(\beta; J)$ with another moduli space of holomorphic disks with one interior marked point to cancel this boundary. However, note that there is an almost complex structure $J_{0}$ on $(T^{*}S^{3}, \omega_{0})$ such that the images of all $J_{0}$-holomorphic disks on $L_{\lambda}$ with Maslov index two are outside a neighborhood $V_{\lambda}$ of the zero section. In this case, the disk counting and cylinder counting are separated. Then we can count such holomorphic cylinders by defining the following mapping degree
$$
n_{\beta}^{cy}:=\deg (ev: \mathcal{M}^{cy}_{1}(\beta; J)\rightarrow L) \in \mathbb{Q}.
$$
The next condition gives some control on the local cylinder contributions.

\begin{condition}\label{condition+}
There exists a compatible almost complex structure $J$ satisfying Condition \ref{condition} and the following
\begin{enumerate}
\setcounter{enumi}{5}
\item the images of all $J$-holomorphic disks on $L_{\lambda}$ with $\mu(\beta)=2, \omega(\beta)<E_{5, \lambda}$ are outside a neighborhood $V_{\lambda}$ of $S$;
\item $0\neq 1+n_{\beta_{1}}^{cy}+n_{\beta_{2}}^{cy}-n_{\beta_{3}}^{cy}-n_{\beta_{4}}^{cy}$;
\item $0= n_{\beta}^{cy}$ for any class $\beta$ with $\mu(\beta)=2, \omega(\beta)<E_{5, \lambda}, \beta\neq \beta_{i}$.
\end{enumerate}
\end{condition}

Note that $(6)$ can be achieved by assuming $J=J_{0}$ on $U$, $(7)$ and $(8)$ may put non-trivial constraints. Nonetheless, we give an argument that after degenerating the complex structure near $S$, there is no holomorphic disk or cylinder touching $S$ with Maslov index two. Hence $n_{\beta}^{cy}$ is always zero and $(7), (8)$ are satisfied. However, this degenerate almost complex structure may not satisfy other assumptions in Condition \ref{condition}. Hence we view them as assumptions, rather than consequences, under which we can perform computations to obtain geometric applications from this deformed $A_{\infty}$-structure. On the other hand, the mapping degree of some disk class (or the one-pointed open Gromov-Witten invariant of that class) is known to be one. Then Condition \ref{condition+} helps to show that after adding cylinder contributions the leading order terms of the potential function have non-degenerate critical points. So we can use an implicit function theorem to perturb away higher energy terms.

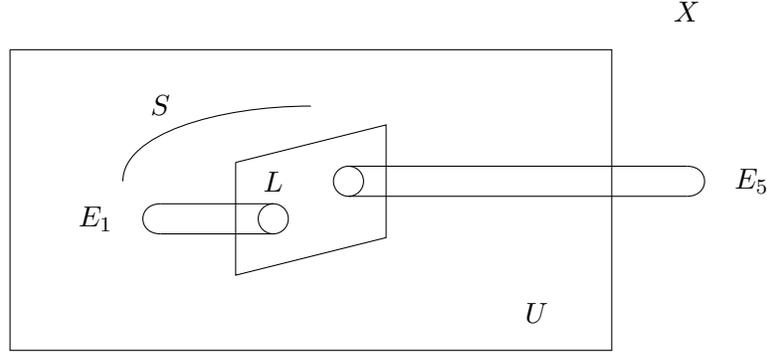
\begin{figure}
  \begin{tikzpicture}[xscale=1, yscale=0.5]
  \draw (-1,1)--(1,2)--(1,-1)--(-1,-2)--(-1,1);
  \draw (-4,4)--(4,4)--(4,-4)--(-4,-4)--(-4,4);
  \draw (-2.5,0.5) to [in=180,out=90] (0,2.5);
  \draw (0.5,0.5) ellipse (0.2 and 0.4);
  \draw (0.5,0.9)--(5,0.9);
  \draw (0.5,0.1)--(5,0.1);
  \draw (5,0.9) to [out=0,in=0] (5,0.1);
  \draw (-0.5,-0.5) ellipse (0.2 and 0.4);
  \draw (-0.5,-0.9)--(-2,-0.9);
  \draw (-0.5,-0.1)--(-2,-0.1);
  \draw (-2,-0.9) to [out=180,in=180] (-2,-0.1);
  \node [above] at (-2,2) {$S$};
  \node at (3,-3) {$U$};
  \node at (-0.5,0.5) {$L$};
  \node at (5,5) {$X$};
  \node [left] at (-2.5,-0.5) {$E_{1}$};
  \node [right] at (5.5,0.5) {$E_{5}$};
  \end{tikzpicture}
  \caption{Disk contributions from inside and outside.}
\end{figure}

\begin{theorem}\label{dis}
Let $X$ be a closed symplectic 6-manifold which contains an integrally homologically trivial Lagrangian 3-sphere $S$. Consider the Lagrangian embedding
$$
L_{\lambda}\hookrightarrow U=D_{r}T^{*}S^{3}\subset X
$$
for $\lambda\in (0, \lambda_{0})$ where $\lambda_{0}$ is a sufficiently small number. If $L_{\lambda}$ satisfies Condition \ref{condition+}, and it can be displaced by a Hamiltonian isotopy $\phi_{t}$ generated by $G_{t}$, then
$$
\norm{G_{t}}_{X} \geq E_{5,\lambda}
$$
and
$$
\norm{G_{t}}_{X} +2\norm{G_{t}}_{S} \geq \min\lbrace E_{S}+E_{5,\lambda}- E_{1,\lambda}, 2(E_{5,\lambda}- E_{1,\lambda}), E_{+}\rbrace.
$$
\end{theorem}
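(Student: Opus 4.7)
The proof exhibits two torsion classes in two different $\Lambda_0$-Floer cohomologies of $L_\lambda$: a classical one giving the first inequality, and a new cylinder-corrected one giving the second. In both cases the mechanism is the same Chekanov-FOOO principle: an element of the $\Lambda_0$-Floer cohomology of $L_\lambda$ whose torsion threshold is $\tau$ forces $\norm{G_t}_X \geq \tau$ for any displacing Hamiltonian.

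\textbf{First inequality.} By the explicit local computation of \cite{CKO, CPU}, the disk potential of $L_\lambda$ inside $U$ is a Laurent polynomial $W_{\text{loc}}$ whose four monomials have common Novikov valuation $E_{1,\lambda}$ and which admits a nondegenerate critical point $\alpha_{\text{loc}}$. Under Condition 1.1 the global potential $W_X$ in $X$ differs from $W_{\text{loc}}$ only by outside contributions of valuation $\geq E_{5,\lambda}$. The weak bounding cochain $b$ corresponding to $\alpha_{\text{loc}}$ is therefore a bounding cochain for $L_\lambda$ in $X$ modulo $T^{E_{5,\lambda}}$, and the unit class in the local Floer cohomology survives to a cycle in the filtered Floer complex $(CF(L_\lambda, b), \delta_{b})$ whose cohomology class is nonzero modulo Novikov valuation $E_{5,\lambda}$. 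Since $L_\lambda$ is assumed displaceable, this class must be torsion over $\Lambda_0$, and its torsion threshold is at least $E_{5,\lambda}$. Hence $\norm{G_t}_X \geq E_{5,\lambda}$.

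\textbf{Second inequality.} Because $[S]=0$ in $H_3(X)$, there exists a singular 4-chain $C$ with $\partial C = S$; by Weinstein neighborhood considerations $C$ may be taken to lie inside $U$. Bulk-deforming the Floer differential of $L_\lambda$ by $C$ naively fails to square to zero, since $\partial C = S$ produces an unpaired codimension-one stratum in the moduli of strips meeting $C$. The cylinder-corrected Floer theory of Section 4 fixes this by adding to the differential a count of holomorphic strips on $L_\lambda$ carrying one interior boundary circle mapped to $S$, i.e.\ strips with a cylinder neck asymptotic to the zero section. The key identity is that the boundary of this new moduli space exactly matches the $\partial C$-anomaly of the bulk deformation, restoring $d^2 = 0$ in a well-defined energy window. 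Running through the same torsion analysis as in Step 1 but with the cylinder-corrected differential yields a torsion class of threshold $2(E_{5,\lambda} - E_{1,\lambda})$: the doubling comes from the fact that each cylinder configuration effectively absorbs two outside disks (paying $2E_{5,\lambda}$), while the subtraction records the $2E_{1,\lambda}$ saved by routing through $S$. The Hofer-energy identity associated to this augmented complex controls the strip part by $\norm{G_t}_X$ and each of the two cylinder sides by $\norm{G_t}_S$, yielding $\norm{G_t}_X + 2\norm{G_t}_S \geq 2(E_{5,\lambda} - E_{1,\lambda})$.

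\textbf{Main obstacle.} The heart of the argument is verifying that the cylinder-corrected differential squares to zero, that is, classifying and pairing the codimension-one degenerations of strips-with-cylinder-neck: ordinary strip-strip breakings (handled by the classical argument) and neck-pinching of the cylinder onto $S$ (which must be matched with the $\partial C$-anomaly of the bulk deformation by $C$). This requires a compactness and gluing package for pearly configurations between two disjoint Lagrangians in the spirit of Biran-Cornea \cite{BC2} and Charest-Woodward \cite{CW}, adapted to cylinders into a Lagrangian 3-sphere. Condition 1.1 rules out sphere bubbles and Maslov $\leq 0$ disk bubbles, while the homological triviality of $S$ is essential to construct $C$ inside $U$. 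Once this algebraic structure is in place, both inequalities follow from the standard action-energy principle applied to the respective torsion classes.
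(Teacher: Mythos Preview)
Your argument for the first inequality is correct and matches the paper: one finds a weak bounding cochain $\rho_0$ at which the local potential has a critical point, observes that global corrections enter only at valuation $\geq E_{5,\lambda}$, and applies the FOOO torsion/displacement principle.

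For the second inequality, you have the right architecture (bulk-deform by a 4-chain $K$ with $\partial K = S$, cancel the $\partial K$-anomaly by counting strips with one interior hole on $S$, then run the displacement-energy argument), but two points are misidentified and one essential computation is missing.

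First, the chain $K$ cannot be taken inside $U$. Inside $U$ the preimage of a face of the moment polytope has \emph{two} boundary components: $S$ and a 3-sphere on $\partial U$. The homological triviality of $S$ in $X$ is used precisely to cap the outer boundary in $X\setminus U$ (Lemma 4.4), producing a global 4-chain with $\partial K = S$. This matters because the outside disk classes may intersect $K$ outside $U$.

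Second, your explanation of the factor $2$ in $2(E_{5,\lambda}-E_{1,\lambda})$ is wrong. No cylinder ``absorbs two outside disks.'' The mechanism is algebraic: one deforms by $\mathfrak{b}=w\cdot PD(K)$ with $v(w)\geq E:=E_{5,\lambda}-E_{1,\lambda}$, so the cylinder-corrected operator $\partial_K$ carries valuation $\geq E$ and $(\partial_K)^2\equiv 0\bmod T^{2E}$. Thus $d^2\equiv 0$ only modulo $T^{2E}$, and the whole theory---potential, critical points, torsion thresholds---is meaningful only up to that level. The paper then \emph{computes} the deformed potential $\mathfrak{PO}^{cy,\mathfrak{b}}$ (Theorem 6.1), sets $x=1$, and solves the critical point equations in $(w,y,z)$ by a Novikov implicit function theorem around the nondegenerate zero $(w,y,z)=(0,1,-1)$; the solution automatically has $v(w)\geq E$ because the higher-order terms begin at valuation $E_{5,\lambda}$. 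You have no analogue of this step, and without it there is no torsion class of threshold $2E$ to point to.

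Finally, the $2\norm{G_t}_S$ does not come from ``two cylinder sides.'' Each strip-with-hole has exactly one circle boundary on $S$, contributing at most $\norm{G_t}_S$ to the energy loss of a single continuation map (Lemma 5.8). The factor $2$ arises because the homotopy $\Phi_-\circ\Phi_+\sim \mathfrak{i}$ uses continuation in both directions, so the total shift is $E_++E_-=\norm{G_t}_X+2\norm{G_t}_S$ (Proposition 5.12).
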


Here $\norm{\cdot}_{X}$ is the Hofer norm and $\norm{\cdot}_{S}$ is a relative Hofer norm defined by
$$
\norm{G_{t}}_{S}= \int_{0}^{1} (\max_{S} G_{t}- \min_{S} G_{t}) dt.
$$
Theorem \ref{dis} is an explicit application of Theorem \ref{construction}, where a certain bulk chain $wK$ is used with $v(w)=E_{5,\lambda}- E_{1,\lambda}$. By definition we know that $\norm{G_{t}}_{X}\geq \norm{G_{t}}_{S}$. But for the above two inequalities we can not say which one is stronger, unless we know the behavior of $G_{t}$ on $S$. For example, the displaceable Lagrangian sphere $S$ in $F_{3}$ can be displaced by a group action. In particular the Hamiltonian function is constant on $S$, hence $\norm{G_{t}}_{S}=0$ and the second inequality is much stronger than the first one and almost optimal, see Section 6.3.

Note that our family of tori approaches the sphere infinitesimally, as a corollary we obtain an estimate of the displacement energy of our Lagrangian sphere $S$.

\begin{corollary}\label{sphere}
With the same notation in Theorem \ref{dis} and assuming that $E_{S}\geq E_{5,\lambda}, E_{+}\geq 2E_{5,\lambda}$, if $S$ can be displaced by a Hamiltonian isotopy $\phi_{t}$ generated by $G_{t}$ then
$$
\norm{G_{t}}_{X} \geq  \lim_{\lambda\rightarrow 0} E_{5,\lambda}, \quad \norm{G_{t}}_{X} +2\norm{G_{t}}_{S} \geq \lim_{\lambda\rightarrow 0} 2 (E_{5,\lambda}- E_{1,\lambda})= \lim_{\lambda\rightarrow 0} 2 E_{5,\lambda}.
$$
\end{corollary}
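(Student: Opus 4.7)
The plan is to reduce Corollary 1.3 to Theorem 1.2 by showing that a Hamiltonian displacing $S$ automatically displaces all sufficiently small members of the family $L_\lambda$, and then letting $\lambda\to 0$.

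First I would invoke compactness and continuity of $\phi_1$: since $\phi_1(S)\cap S=\emptyset$ and $S$ is compact, there is an open neighborhood $V$ of $S$ in $X$ with $\phi_1(V)\cap V=\emptyset$ (obtained, for instance, by picking disjoint neighborhoods of $S$ and $\phi_1(S)$ and intersecting the first with $\phi_1^{-1}$ of the second). By property (3) of the family $\{L_\lambda\}$ recalled in Section 3, there is a threshold $\lambda_1>0$ such that $L_\lambda\subset V\cap U$ for all $\lambda\in(0,\lambda_1)$; consequently $\phi_1(L_\lambda)\cap L_\lambda=\emptyset$, so the same Hamiltonian $G_t$ displaces $L_\lambda$. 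For such $\lambda$, Theorem 1.2 applies and yields
$$
\norm{G_t}_X \ge E_{5,\lambda}, \qquad \norm{G_t}_X + 2\norm{G_t}_S \ge 2(E_{5,\lambda}-E_{1,\lambda}).
$$

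Passing to the limit $\lambda\to 0$, the left-hand sides are fixed (they depend only on the given $G_t$), while on the right, by monotonicity of $L_\lambda$ with constant $\lambda$ the four local Maslov-two disks inside $U$ have symplectic area $E_{1,\lambda}$ proportional to $\lambda$, so $E_{1,\lambda}\to 0$ and both inequalities of the corollary drop out. The main point requiring care, rather than a serious geometric obstacle, is verifying that Condition 1.1 can genuinely be arranged for a sequence $\lambda_n\to 0$---this is implicit in the \emph{same notation} clause of the corollary---and that the limit $\lim_{\lambda\to 0} E_{5,\lambda}$ makes sense. The latter is controlled by the fact that any Maslov-two disk on $L_\lambda$ whose boundary escapes $U$ must carry a positive amount of symplectic area into $X\setminus U$ that does not degenerate as $\lambda\to 0$, so $E_{5,\lambda}$ is bounded below by a positive constant and a standard compactness-of-disk-classes argument gives the existence of the limit (or at worst one replaces it with a $\liminf$ and the proof goes through unchanged).
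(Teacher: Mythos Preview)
Your proposal is correct and follows essentially the same approach as the paper: find a neighborhood of $S$ displaced by $\phi_1$, observe that $L_\lambda$ lies inside for small $\lambda$ and is therefore displaced by the same $G_t$, apply Theorem 1.2, and let $\lambda\to 0$ using $E_{1,\lambda}\to 0$. The paper's proof is more terse (it simply asserts the displaced neighborhood exists and that $E_{5,\lambda}$ increases while $E_{1,\lambda}$ decreases), whereas you spell out the compactness argument and add a remark about the existence of the limit; but the underlying idea is identical.
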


As $\lambda$ tends to zero, the parameter $E_{1, \lambda}$ tends to zero and $E_{5,\lambda}- E_{1,\lambda}$ increases to $E_{5,\lambda=0}$. The energy $E_{5,\lambda=0}$ is roughly the least energy of a holomorphic disk with boundary on $S$. (This also explains why the assumption that $E_{S}\geq E_{5,\lambda}$ is somehow reasonable.) Then the Hofer norm of the Hamiltonian which displaces $S$ is roughly twice the least energy of a holomorphic disk, with a modification term given by the relative Hofer norm.

Next we remark about the applicable range of the above theorems. Condition \ref{condition} is a strong condition, where $(1)-(3)$ are usually assumed to define potential functions and Lagrangian Floer cohomology for a non-monotone Lagrangian torus, see Assumption 3.2 in \cite{Au}. And $(4),(5)$ are technical for us to avoid possible disk and cylinder bubbles on the Lagrangian sphere $S$, with small energy.

In addition to the monotone case, possible examples which satisfy Condition \ref{condition} come from the toric fiber of a symplectic Fano (almost) toric manifold. When $J$ is toric, $(1)-(3)$ are true for any $E_{+}>0$. But in practice we need to fix some number $E_{+}$ to keep Condition \ref{condition} an open condition (except $(2)$), by Gromov compactness theorem, see Lemma 6.4.7 and Lemma 6.4.8 in \cite{MS}. More specifically, let $X_{0}$ be a nodal toric Fano threefold and let $X$ be the smoothing of $X_{0}$. Each node gives us a Lagrangian $S^{3}$ and the local tori near the spheres become toric fibers. There is a full classification \cite{Ga} of 100 nodal toric Fano threefolds, 18 out of which are smooth. In theory one can compute explicitly all the disk potential functions of the toric fibers therein to find the torsion thresholds, by using the combinatorial data from their polytopes. But we do not try to do it here. We also assume that $S$ is homologically trivial over $\mathbb{Z}$. This condition is needed such that $S$ bounds a 4-chain in $X$ and some cylinder counting can be defined. The smoothings of nodal toric Fano threefolds still satisfy this condition. Note that rationally homologically non-trivial Lagrangian spheres are always nondisplaceable, by Theorem H \cite{FOOO}. Therefore our goal here is to use certain examples to convey the idea of Floer cohomology deformed by \textit{chains}, instead of proving results in very general cases.

Besides introducing this new version of Floer theory, we also carry out some computations of the classical Floer cohomology, which neither uses bounding cochains and bulk-deformations, nor assumes Condition \ref{condition}. Let $(X, \omega)$ be a closed symplectic 6-manifold such that
\begin{equation}\label{propo}
[c_{1}(TX)]=c \cdot[\omega], \quad c\in\mathbb{R}
\end{equation}
on the image of the Hurewicz map $\pi_{2}(X; \mathbb{Z})\rightarrow H_{2}(X; \mathbb{Z})$. We say $X$ is \textit{monotone} if $c>0$, it is \textit{Calabi-Yau} if $c=0$ and it is \textit{negatively monotone} if $c<0$. Note that $\pi_{1}(S^{3})=\pi_{2}(S^{3})=0$ implies that $\pi_{2}(X, S)\cong \pi_{2}(X)$. If (\ref{propo}) is satisfied then the two homomorphisms $c_{1}$ and $\omega$ on the relative homotopy group are also proportional to each other with the same constant $c$. In particular if $X$ is monotone then $S$ is automatically a monotone Lagrangian submanifold in the usual sense.

First, by a degeneration method \cite{FT, FTZ} from the symplectic cut and sum construction, we can determine the displaceability of $S$ and $L_{\lambda}$ when $X$ is Calabi-Yau and negatively monotone. Note that Theorem \ref{dis} uses cylinder counting to cancel the outside disk contributions to some extent, here we find that the outside disk contributions can be perturbed away. Combined with the Oakley-Usher's families \cite{OU} of monotone nondisplaceable Lagrangian submanifolds in $T^{*}S^{n}$, a general phenomenon holds in all higher dimensions.

\begin{theorem}\label{cy}
For any integer $n\geq 3$, let $(X^{2n}, S^{n}, \omega)$ be a Calabi-Yau or negatively monotone symplectic manifold with a Lagrangian sphere. Then there are continuum families of Lagrangian submanifolds
$$
L^{\lambda}_{k, m}\cong (S^{1}\times S^{k}\times S^{m})\big/\mathbb{Z}_{2}, \quad k,m \in\mathbb{Z}_{+}, k\leq m, k+m=n-1, \lambda\in (0, \lambda_{0}) \subset\mathbb{R}
$$
near the Lagrangian sphere $S$ and are nondisplaceable in $X$.
\end{theorem}

The nondisplaceability of a Lagrangian sphere in a Calabi-Yau manifold was proved in Theorem L \cite{FOOO}. And M.F.Tehrani \cite{FT} gave an alternative proof by the symplectic sum and cut method. Here we are using his approach to analyze the Lagrangian submanifolds near the sphere. For readers who are interested in the Lagrangian skeleta of a Calabi-Yau manifold, this theorem helps to show that if a symplectic manifold is the divisor complement of a Calabi-Yau manifold and contains a Lagrangian sphere, then its skeleta must intersect all those $L^{\lambda}_{k, m}$ near this sphere, see the work \cite{TVa} by Tonkonog-Varolgunes.

When the dimension $n=2$ the existence of a one-parameter family of nondisplaceable Lagrangian tori near a Lagrangian two-sphere has also been studied. First Fukaya-Oh-Ohta-Ono \cite{FOOO3} proved the existence when the ambient space is $S^{2}\times S^{2}$ and the Lagrangian sphere is the anti-diagonal. Then the general case, without assumption on $c_{1}$, is studied by the author in \cite{S}, by using the local geometry of $T^{*}S^{2}$ to control the global picture. Similar local-to-global philosophy also played an important role in \cite{CW}, \cite{TV} and \cite{Wu} for other local models, where many geometric applications are obtained.

Next we discuss the case when $X$ is monotone. Since $S^{3}$ is simply-connected, orientable and spin, the classical Floer cohomology $HF(S^{3}; \Lambda(\mathbb{Z}))$ is well-defined over the integers, which can be computed by using the pearl complex. Here $\Lambda(\mathbb{Z})$ is the Novikov ring with $\mathbb{Z}$ as the ground ring. The underlying complex is generated by critical points of a Morse function over the Novikov ring. If we use the height function to compute it, the only essential maps count pearly trajectories with Maslov index four, connecting the minimum and maximum point. For example, when $X$ has minimal Chern number $N\geq 3$, these maps are zero and we have that $HF(S^{3}; \Lambda(\mathbb{Z}))=H^{*}(S^{3}; \mathbb{Z})\otimes \Lambda(\mathbb{Z})$. When $X$ has minimal Chern number $N=2$, these maps are two-pointed open Gromov-Witten invariants of class $\beta$. When $X$ has minimal Chern number $N=1$, these maps count holomorphic disks connected by negative gradient flow lines. However, the usual two-pointed open Gromov-Witten invariant of class $\beta$ is not well-defined due to splittings of disks with Maslov index two.

On the other hand, Welschinger \cite{W} defined $F$-valued open counts of disks for a Lagrangian submanifold $L$ when $H_{1}(L; F)\rightarrow H_{1}(X; F)$ is injective, for a commutative ring $F$. Given a disk class $\beta$ and $r\geq 1$ boundary constraints, his invariant $n^{W}_{r,\beta}$ counts multi-disks weighted by linking numbers. We compare his invariants and the Floer differential and find they are equal to each other, up to sign.

\begin{theorem}\label{wel}
Let $S$ be a Lagrangian 3-sphere in a monotone symplectic 6-manifold $X$. Choose a generic triple $(f,\rho,J)$ of a Morse function $f$, a Riemannian metric $\rho$ and a compatible almost complex structure $J$. Assume that $f$ is the height function and $p$ is its minimum and $q$ is its maximum. Then given a disk class $\beta\in \pi_{2}(X, S)$ with Maslov index four, we have an equality
$$
\sharp \mathcal{P}_{prl}(p, q; \beta; f, \rho, J)= \pm n^{W}_{2,\beta}.
$$
Here the left-hand side of the equation is the signed count of pearly trajectories connecting $p$ to $q$, see Section 3.2.
\end{theorem}

Therefore we can define the following invariant
$$
n^{W}_{2}:=\sum_{\mu(\beta)=4} n^{W}_{2, \beta} \in \mathbb{Z}
$$
to determine the Floer cohomology. That is, $HF(S; \Lambda(\mathbb{Z}_{2}))\neq\lbrace 0\rbrace$ if $n^{W}_{2}$ is an even number. So we hope the Welschinger invariants help us to compute the Floer homology in certain settings, like the one-pointed open Gromov-Witten invariants in the case of toric fibers. Moreover, we expect to define similar enumerative equations for Lagrangian submanifolds of general topological type. For example, for a Lagrangian $S^{3}\times T^{n}$ we may need both two types of enumerative invariants to determine its Floer homology.

\subsection{Future questions}
Now we formulate three questions motivated by above discussions.

\begin{question}
Is it true that all monotone Lagrangian 3-spheres in a closed symplectic 6-manifold are nondisplaceable? What about higher dimensions?
\end{question}

When the ambient space is open, there are certain monotone Lagrangian $(2k+1)$-spheres in $\mathbb{C}^{k+1}\times \mathbb{C}P^{k}$ which are displaceable, see \cite{ALP} and \cite{A2}. In higher dimensions, Solomon-Tukachinsky \cite{ST} and Chen \cite{Chen} have generalized Welschinger invariants. We expect their invariants also have some meaning in Floer theory.

\begin{question}
Is it true that all Lagrangian 3-spheres in a closed Calabi-Yau manifold are not isolated?
\end{question}

By ``not isolated'' we mean for a Lagrangian sphere $S$ there is another Lagrangian submanifold $L$ such that $L$ is not Hamiltonian isotopic to $S$ and not displaceable from $S$. A stronger question would be that any Lagrangian 3-sphere in a Calabi-Yau manifold must intersect another Lagrangian 3-sphere. One possibility is that $L$ comes from a ``completion'' of the cotangent fiber of $S$. Then, how to relate the cotangent fiber generation \cite{A} to a global statement will be a deeper question.

\begin{question}
Is it true that all simply-connected Lagrangian submanifold in a Calabi-Yau manifold are nondisplaceable?
\end{question}

In the 3-dimensional case it is true since the only candidate is the 3-sphere. And this question can be viewed as a generalization for Gromov's theorem that no simply-connected Lagrangian submanifold exists in $\mathbb{C}^{n}$. If a simply-connected Lagrangian submanifold $L$ lives in a Calabi-Yau manifold, it has vanishing Maslov class. The possible appearance of Maslov zero holomorphic disks with boundary on $L$ gives obstructions to define the Lagrangian Floer cohomology. The obstructions are cohomology classes living in $H^{2}(L; \mathbb{Q})$. One treatment is to totally exclude them by assuming that $H^{2}(L; \mathbb{Q})=0$. Then this question has an affirmative answer, see Theorem 6.1.15 and Corollary 6.1.16 in \cite{FOOO}.

The outline of this article is as follows. In Section 2 we give the background on potential functions with bulk deformations. In Section 3 we review the symplectic sum and cut method and prove Theorem \ref{cy} and Theorem \ref{wel}. In Section 4 and 5 we construct three types of Floer theories with cylinder corrections and show some geometric properties of these theories. The first model is a disk model with cylinder corrections, which gives us a deformed potential function to do concrete computations. The second and third models are complexes generated by Hamiltonian chords and intersection points respectively, which will be used to study the intersection behavior of our Lagrangians under Hamiltonian perturbations. Once the equivalences between the three models is established, we apply them, in Section 6, to obtain estimates of displacement energy and prove Theorem \ref{dis}.

\subsection*{Acknowledgements}
The author acknowledges Kenji Fukaya for his enlightening guidance and support during these years. The author acknowledges Xujia Chen, Mark McLean, Yi Wang, Hang Yuan and Aleksey Zinger for helpful discussions. The author acknowledges Yunhyung Cho, Tobias Ekholm and Yong-Geun Oh for explaining their related work. The author also acknowledges the anonymous referee for lots of helpful comments and suggestions.

\section{Preliminaries}
We give a very brief summary to the theory of deformed Floer cohomology and potential functions, referring to Section 2 and Appendix 1 in \cite{FOOO3} for more details.

First we specify the ring and field that will be used. The Novikov ring $\Lambda_{0}$ and its field $\Lambda$ of fractions are defined by
$$
\Lambda_{0}=\lbrace \sum_{i=0}^{\infty}a_{i}T^{\lambda_{i}}\mid a_{i}\in \mathbb{C}, \lambda_{i}\in\mathbb{R}_{\geq 0}, \lambda_{i}<\lambda_{i+1}, \lim_{i\rightarrow \infty}\lambda_{i}=+\infty \rbrace
$$
and
$$
\Lambda=\lbrace \sum_{i=0}^{\infty}a_{i}T^{\lambda_{i}}\mid a_{i}\in \mathbb{C}, \lambda_{i}\in\mathbb{R}, \lambda_{i}<\lambda_{i+1}, \lim_{i\rightarrow \infty}\lambda_{i}=+\infty \rbrace
$$
where $T$ is a formal variable. The maximal ideal of $\Lambda_{0}$ is defined by
$$
\Lambda_{+}=\lbrace \sum_{i=0}^{\infty}a_{i}T^{\lambda_{i}}\mid a_{i}\in \mathbb{C}, \lambda_{i}\in\mathbb{R}_{>0}, \lambda_{i}<\lambda_{i+1}, \lim_{i\rightarrow \infty}\lambda_{i}=+\infty \rbrace.
$$
We remark that the field $\Lambda$ is algebraically closed since the ground field is $\mathbb{C}$, see Appendix A in \cite{FOOO1}. All the nonzero elements in $\Lambda_{0}-\Lambda_{+}$ are units in $\Lambda_{0}$. Next we define a valuation $v$ on $\Lambda$ by
$$
v(\sum_{i=0}^{\infty}a_{i}T^{\lambda_{i}})=\inf \lbrace \lambda_{i}\mid a_{i}\neq 0\rbrace, \quad v(0)=+\infty.
$$
This valuation gives us a non-Archimedean norm
$$
\abs{a=\sum_{i=0}^{\infty}a_{i}T^{\lambda_{i}}}=e^{-v(a)}.
$$

In the following we abuse the notations between singular chains and cochains via the following conventional Poincar\'e duality, see Remark 3.5.8 \cite{FOOO}. For a singular chain $x$ in $L$, the Poincar\'e dual $PD(x)$, regarded as a current satisfies that
\begin{equation}
\int_{x} \alpha\mid_{x}= \int_{L} PD(x)\wedge \alpha
\end{equation}
for any differential form $\alpha \in \Omega^{\dim x}(L)$. Geometrically when we define moduli space of holomorphic curves with point constraints, we think our curves with points attached on certain chains. But for algebraic convenience we think these chains as cochains with gradings reversed and shifted.

Let $X$ be a closed symplectic 6-manifold and $L$ be an oriented Lagrangian submanifold. Fukaya-Oh-Ohta-Ono constructed a filtered $A_{\infty}$-algebra structure on $H^{*}(L; \Lambda_{0})$ where
$$
\mathfrak{m}_{k}: H^{*}(L; \Lambda_{0})^{\otimes k}\rightarrow H^{*}(L; \Lambda_{0})
$$
are the $A_{\infty}$-operations, see section 3 in \cite{FOOO1}. The operators $\mathfrak{m}_{k}$ are defined as
$$
\mathfrak{m}_{k}(x_{1}, \cdots, x_{k})=\sum_{\beta\in \pi_{2}(X,L)} T^{\omega(\beta)}\cdot\mathfrak{m}_{k;\beta}(x_{1}, \cdots, x_{k})
$$
where geometrically $\mathfrak{m}_{k;\beta}(x_{1}, \cdots, x_{k})$ count holomorphic disks, representing the class $\beta$, with boundary marked points attached on given cocycles $(x_{1}, \cdots, x_{k})$ in $L$. We remark that the operators $\mathfrak{m}_{k}$ are first defined at the chain level then can be passed to their ``canonical model'' at the cohomology level. Here we directly use the canonical model at the cohomology level.

An element $b\in H^{1}(L; \Lambda_{+})$ is called a weak bounding cochain if it satisfies the $A_{\infty}$-Maurer-Cartan equation
\begin{equation}
\sum _{k=0}^{\infty} \mathfrak{m}_{k}(b, \cdots, b) \equiv 0 \mod PD([L]).
\end{equation}
Here $PD([L])\in H^{0}(L; \mathbb{Z})$ is the Poincar\'e dual of the fundamental class and it is the unit of the filtered $A_{\infty}$-algebra. We denote by $\mathcal{M}_{weak}(L)$ the set of weak bounding cochains of $L$. If $\mathcal{M}_{weak}(L)$ is not empty then we say $L$ is weakly unobstructed.

The coefficients of weak bounding cochains can be extended from $\Lambda_{+}$ to $\Lambda_{0}$. For $b\in H^{1}(L; \Lambda_{0})$ we can write $b=b_{0}+b_{+}$ where $b_{0}\in H^{1}(L; \mathbb{C})$ and $b_{+}\in H^{1}(L; \Lambda_{+})$. Then we define
\begin{equation}
\mathfrak{m}_{k, \beta}(b, \cdots, b):= e^{\langle \partial\beta, b_{0}\rangle}\mathfrak{m}_{k, \beta}(b_{+}, \cdots, b_{+})
\end{equation}
where the pairing $\langle \partial\beta, b_{0}\rangle= \int_{\partial\beta} b_{0}$. Note that if $b_{0}=b_{0}' +2\pi\sqrt{-1}\mathbb{Z}$ then $e^{\langle \partial\beta, b_{0}\rangle}=e^{\langle \partial\beta, b_{0}'\rangle}$. So the weak bounding cochains with $\Lambda_{0}$ coefficients are actually defined modulo this equivalence. More precisely, they should be regarded as elements in
$$
H^{1}(L; \Lambda_{0})\big/H^{1}(L; 2\pi\sqrt{-1}\mathbb{Z}):= H^{1}(L; \mathbb{C})\big/H^{1}(L; 2\pi\sqrt{-1}\mathbb{Z}) \oplus H^{1}(L; \Lambda_{+}).
$$

Now for a weak bounding cochain $b$ we can deform the $A_{\infty}$-operations in the following way. Define
$$
\mathfrak{m}_{k}^{b}(x_{1}, \cdots, x_{k}):= \sum_{l=0}^{\infty} \sum_{l_{0}+\cdots +l_{k}=l} \mathfrak{m}_{k+l_{0}+\cdots +l_{k}}(b^{\otimes l_{0}}, x_{1}, b^{\otimes l_{1}}, x_{2}, \cdots, x_{k}, b^{\otimes l_{k}}).
$$
That is, we insert $b$ in all possible ways. Then $\lbrace \mathfrak{m}_{k}^{b}\rbrace$ is a new sequence of $A_{\infty}$-operations on $H^{*}(L; \Lambda_{0})$ which satisfies that
\begin{equation}
\mathfrak{m}_{1}^{b}\circ \mathfrak{m}_{1}^{b}=0,
\end{equation}
see Proposition 3.6.10 in \cite{FOOO}. So we can define the deformed Floer cohomology $HF(L, b; \Lambda_{0})$ as the cohomology of $\mathfrak{m}_{1}^{b}$ whenever $b$ is a weak bounding cochain.

When $L$ is a torus, we define a \textit{potential function}
$$
\mathfrak{PO}: \mathcal{M}_{weak}(L)\rightarrow \Lambda_{+}
$$
by setting
$$
\sum _{k=0}^{\infty} \mathfrak{m}_{k}(b, \cdots, b)=\mathfrak{PO}(b)\cdot PD([L]).
$$

The new $A_{\infty}$-operations $\lbrace \mathfrak{m}_{k}^{b}\rbrace$ can be regarded as a deformation of $\lbrace \mathfrak{m}_{k}\rbrace$ by a weak Maurer-Cartan element $b$, which is from the cohomology of $L$ itself. Similarly we can deform the $A_{\infty}$-operations by the cohomology of the ambient symplectic manifold $X$. Such a deformation is called a \textit{bulk deformation}.

Let $E_{l}H^{*}(X; \Lambda_{+})$ be the subspace of $H^{*}(X; \Lambda_{0})^{\otimes l}$ which is invariant under the action of the $l$th symmetric group. Then in \cite{FOOO} a sequence of operators $\lbrace\mathfrak{q}_{l, k; \beta}\rbrace_{l\geq 0; k\geq 0}$ is constructed
$$
\mathfrak{q}_{l, k; \beta}: E_{l}H^{*}(X; \Lambda_{+})\otimes H^{*}(L; \Lambda_{0})^{\otimes k}\rightarrow H^{*}(L; \Lambda_{0}).
$$
Geometrically those operators count holomorphic disks with both boundary marked points attached on given cocycles in $L$ and interior marked points attached on given cocycles in $X$. And we define the operator $\mathfrak{q}_{l, k}:=\sum_{\beta}T^{\omega(\beta)}\cdot\mathfrak{q}_{l, k; \beta}$. Again, here we are using the operators constructed on the canonical model. When $l=0$ we have that
\begin{equation}\label{q0}
\mathfrak{q}_{0, k}(1; x_{1}, \cdots, x_{k})=\mathfrak{m}_{k}(x_{1}, \cdots, x_{k})
\end{equation}
where $1\in H^{*}(X; \Lambda_{0})$ is the unit.

Now for any $\mathfrak{b}\in H^{*}(X; \Lambda_{+})$ and $x_{1}, \cdots, x_{k}\in H^{*}(L; \Lambda_{0})$ we define
\begin{equation}\label{bulk}
\mathfrak{m}^{\mathfrak{b}}_{k}(x_{1}, \cdots, x_{k})=\sum_{l=0}^{\infty}\mathfrak{q}_{l, k}(\mathfrak{b}^{\otimes l}; x_{1}, \cdots, x_{k}).
\end{equation}
Then $\lbrace\mathfrak{m}^{\mathfrak{b}}_{k}\rbrace$ also defines a filtered $A_{\infty}$-algebra structure on $H^{*}(L; \Lambda_{0})$. For a fixed $\mathfrak{b}$, an element $b\in H^{1}(L; \Lambda_{+})$ is called a weak bounding cochain (with respect to $\mathfrak{b}$) if it satisfies the $A_{\infty}$-Maurer-Cartan equation given by the deformed operators $\lbrace\mathfrak{m}^{\mathfrak{b}}_{k}\rbrace$. And we write $\mathcal{M}_{weak}(L; \mathfrak{b})$ as the set of weak bounding cochains of $L$ with respect to $\mathfrak{b}$.

To do concrete computations there are two \textit{divisor axioms} for the operators $\mathfrak{m}_{k}$ and $\mathfrak{q}_{l,k}$. For $\mathfrak{b}\in H^{2}(X; \Lambda_{+}), b\in H^{1}(L; \Lambda_{+})$ and $\mu(\beta)=2$ we have that
\begin{equation}\label{divisor axiom}
\begin{aligned}
& \mathfrak{m}_{k;\beta}(b^{\otimes k})= \dfrac{(b(\partial\beta))^{k}}{k!}\cdot \mathfrak{m}_{0;\beta}(1);\\
& \mathfrak{q}_{l,k;\beta}(\mathfrak{b}^{\otimes l}; x_{1},\cdots, x_{k})=\dfrac{(\mathfrak{b}\cdot \beta)^{l}}{l!}\cdot \mathfrak{q}_{0,k;\beta}(1; x_{1},\cdots, x_{k}).
\end{aligned}
\end{equation}
where $b(\partial\beta)$ and $\mathfrak{b}\cdot \beta$ are pairings between homology and cohomology classes (we assume that $\mathfrak{b}\cap L=\emptyset$). These are first studied in \cite{F} and we refer to Section 7 in \cite{FOOO2} for a proof.

Next we put those two deformations together, one from the Lagrangian itself and the other from the ambient space. Define an operator
\begin{equation}\label{boundary operator}
d_{\mathfrak{b}}^{b}(x)=\sum_{k_{0}, k_{1}}\mathfrak{m}^{\mathfrak{b}}_{k_{0}+k_{1}+1}(b^{\otimes k_{0}}, x, b^{\otimes k_{1}}) : H^{*}(L; \Lambda_{0})\rightarrow H^{*}(L; \Lambda_{0}).
\end{equation}
When $b\in \mathcal{M}_{weak}(L; \mathfrak{b})$ we have that
\begin{equation}
d_{\mathfrak{b}}^{b}\circ d_{\mathfrak{b}}^{b}(x)=0
\end{equation}
and the resulting cohomology
$$
HF(L, \mathfrak{b}, b; \Lambda_{0})
$$
is called the deformed Floer cohomology of $L$ by the bulk deformation $\mathfrak{b}$. If we expand the summation of $d_{\mathfrak{b}}^{b}$ we will find that the new differential $d_{\mathfrak{b}}^{b}$ contains the differential $\mathfrak{m}_{1}^{b}$.
\begin{equation}
\begin{aligned}
d_{\mathfrak{b}}^{b}(x)& =\sum_{k_{0}, k_{1}}\mathfrak{m}^{\mathfrak{b}}_{k_{0}+k_{1}+1}(b^{\otimes k_{0}}, x, b^{\otimes k_{1}})\\
&= \sum_{l, k_{0}, k_{1}}\mathfrak{q}_{l, k_{0}+k_{1}+1}(\mathfrak{b}^{\otimes l}; b^{\otimes k_{0}}, x, b^{\otimes k_{1}})\\
&= \mathfrak{m}_{1}^{b}(x)+ \sum_{l\geq 1, k_{0}, k_{1}}\mathfrak{q}_{l, k_{0}+k_{1}+1}(\mathfrak{b}^{\otimes l}; b^{\otimes k_{0}}, x, b^{\otimes k_{1}}).
\end{aligned}
\end{equation}
Hence the differential $d_{\mathfrak{b}}^{b}$ is a sum of the ``zeroth order'' term $\mathfrak{m}_{1}^{b}$ and ``higher order'' deformations which count holomorphic disks with interior marked points attached on given cocycles in $X$.

Similarly we define a \textit{bulk-deformed potential function}
$$
\mathfrak{PO}^{\mathfrak{b}}: \mathcal{M}_{weak}(L; \mathfrak{b})\rightarrow \Lambda_{+}
$$
by setting
$$
\sum _{k=0}^{\infty} \mathfrak{m}^{\mathfrak{b}}_{k}(b, \cdots, b)=\mathfrak{PO}^{\mathfrak{b}}(b)\cdot PD([L]).
$$
From the above discussion we have that $\mathfrak{PO}^{\mathfrak{b}=0}(b)=\mathfrak{PO}(b)$.

Since the operators $\mathfrak{m}_{1}^{b}$ and $d_{\mathfrak{b}}^{b}$ are defined by infinite sums, we need to assume that $b, \mathfrak{b}$ are with $\Lambda_{+}$ coefficients for the convergence issue. But they can be extended with coefficients in $\Lambda_{0}$ by using similar idea in (2.2). Hence we obtain a cohomology theory totally over $\Lambda_{0}$. So we omit the coefficients in $\mathcal{M}_{weak}(L)$ and $\mathcal{M}_{weak}(L; \mathfrak{b})$ when we do not emphasize them.

A structural result, Theorem 6.1.20 in \cite{FOOO}, tells us a decomposition formula for the deformed Floer cohomology
\begin{equation}
HF(L, \mathfrak{b}, b; \Lambda_{0})\cong (\Lambda_{0})^{a}\oplus (\bigoplus_{i=1}^{l} \dfrac{\Lambda_{0}}{T^{\lambda_{i}}\Lambda_{0}})
\end{equation}
where $a\in\mathbb{Z}_{\geq 0}$ is called the Betti number and $\lambda_{i}\in \mathbb{R}_{+}$ are called the torsion exponents of the deformed Floer cohomology. It is proved that only the free part of the deformed Floer cohomology is an invariant under Hamiltonian diffeomorphisms, see Theorem J in \cite{FOOO}. Hence it suffices to show that $a>0$ if we want to prove some $L$ is nondisplaceable. When $a=0$, the torsion exponents are closely related to the displacement energy of $L$, which we will discuss in detail in Section 5.

\section{Computations of classical Floer cohomology}
In this section we carry out some computations of classical Floer cohomology, which are free of bounding cochains and bulk-deformations. Condition \ref{condition} will not be used.

\subsection{Symplectic cut and sum construction}
First we summarize the symplectic cut and sum construction to analyze holomorphic disks on Lagrangian submanifolds near a Lagrangian sphere. The whole construction is fully described in section 2 of \cite{FT} and section 3 of \cite{FTZ}.

Let
$$
Q_{n}=\lbrace [z_{0}, \cdots, z_{n+1}]\in \mathbb{C}P^{n+1}\mid z_{0}^{2}=\sum_{j=1}^{n+1}z_{j}^{2}\rbrace
$$
be the complex quadric hypersurface and
$$
D_{n}=\lbrace [z_{0}, \cdots, z_{n+1}]\in Q_{n}\mid z_{0}=0\rbrace\cong Q_{n-1}
$$
be the divisor at infinity. Then the real part $Q_{n, \mathbb{R}}=Q_{n}\cap \mathbb{R}P^{n+1}$ is a Lagrangian $n$-sphere in $(Q_{n}, \omega_{FS})$ and $Q_{n}-D_{n}$ is a Weinstein neighborhood of $Q_{n, \mathbb{R}}$. Another perspective is that there is a Hamiltonian $S^{1}$-action on $T^{*}S^{n}$ such that the sphere bundles of it are regular level sets. If we collapse the circles on a sphere bundle with radius $r$ then $(D_{r}T^{*}S^{n}, \partial D_{r}T^{*}S^{n})$ goes to $(Q_{n},D_{n})$ with a rescaled Fubini-Study symplectic form.

Now we recall the materials from Proposition 2.1 in \cite{FT} and Proposition 3.1 in \cite{FTZ}. Let $\Delta\subset \mathbb{C}$ be a disk centered at the origin. A \textit{symplectic fibration} is a pair $\pi : (\mathcal{X}, \omega_{\mathcal{X}}) \rightarrow \Delta$ such that $\pi$ is surjective, the total space $(\mathcal{X}, \omega_{\mathcal{X}})$ is a smooth symplectic manifold, the fiber $X_{z}$ is a smooth symplectic submanifold when $z\neq 0$, and $X_{0}$ is a union of symplectic submanifolds of $(\mathcal{X}, \omega_{\mathcal{X}})$ meeting along a smooth symplectic submanifold $D$. We call $D$ the singular locus of $X_{0}$. A \textit{Lagrangian subfibration} of $\pi : (\mathcal{X}, \omega_{\mathcal{X}}) \rightarrow \Delta$ is a submanifold $\mathcal{S}\subset \mathcal{X}$ disjoint from $D$, such that $\pi(\mathcal{S})=\Delta$ and $S_{z}:=\mathcal{S}\cap X_{z}$ is a Lagrangian submanifold for every $z$.

\begin{proposition}
Let $(X, \omega, S)$ be a symplectic $2n$-manifold with a Lagrangian $n$-sphere $S$. There exists a symplectic fibration $\pi: (\mathcal{X}, \omega_{\mathcal{X}}) \rightarrow \Delta$ with a Lagrangian subfibration $\mathcal{S}$. Let $X_{z}$ be the fiber at $z\in \Delta$ then we have
\begin{enumerate}
\item $X_{0}=X_{-}\cup_{D}X_{+}$ where both $X_{\pm}$ are closed smooth symplectic manifolds and $D=X_{-}\cap X_{+}$ is a common symplectic hypersurface;
\item when $z\neq 0$ the pair $(X_{z}, \left.\omega_{\mathcal{X}}\right|_{X_{z}}, \mathcal{S}_{z})$ is symplectomorphic to $(X, \omega, S)$;
\item when $z=0$ then $\mathcal{S}_{0}$ is in $X_{-}$ and the 4-tuple $(X_{-}, \left.\omega_{\mathcal{X}}\right|_{X_{-}}, D, \mathcal{S}_{0})$ is symplectomorphic to $(Q_{n}, \epsilon\omega_{FS}, D_{n}, Q_{n, \mathbb{R}})$ for some scaling parameter $\epsilon >0$, which depends on a choice of the size of a Weinstein neighborhood of $S$ in $X$.
\end{enumerate}
\end{proposition}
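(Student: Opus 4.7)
The plan is to build the family $\mathcal{X} \to \Delta$ by first producing the degenerate fiber $X_0$ via a symplectic cut along the Weinstein neighborhood, and then smoothing it via the standard symplectic sum construction of Gompf and Ionel-Parker, while carrying the Lagrangian sphere along as a subfibration.

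First I would fix a Weinstein neighborhood $\psi\colon (D_r T^*S^n,\omega_0)\to U\subset X$ sending the zero section to $S$. On $D_r T^*S^n$ the normalized geodesic flow extends to a Hamiltonian $S^1$-action on a deleted neighborhood of the zero section, whose moment map is (essentially) the norm in the fiber direction. Pick a regular level $r'<r$ and perform a symplectic cut along this level, following the review in the paragraph preceding the proposition. The two pieces of the cut are: on the inside, the closed manifold obtained by collapsing the circles in $\partial D_{r'}T^*S^n$, which is symplectomorphic to $(Q_n,c\cdot\omega_{FS})$ with the collapsed circle bundle becoming the divisor at infinity $D_n\cong Q_{n-1}$, and with the zero section identified with $Q_{n,\mathbb{R}}$; on the outside, the closed manifold $X_+$ obtained from $X\setminus \psi(\mathring{D}_{r'}T^*S^n)$ by the same collapse, containing a symplectic hypersurface $D\subset X_+$ canonically identified with $D_n$. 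Setting $X_-:=Q_n$ and $X_0:=X_-\cup_D X_+$ gives a normal crossings symplectic variety with $D=X_-\cap X_+$.

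Next I would verify the matching condition needed for the symplectic sum: the normal bundles $N_{D/X_-}$ and $N_{D/X_+}$ are mutually dual, because they arise from the two halves of the same circle bundle $\partial D_{r'}T^*S^n\to D$ carrying opposite $S^1$-weights. This is precisely the input for Gompf's symplectic sum, or for the algebraic construction of a one-parameter smoothing $\pi\colon(\mathcal{X},\omega_{\mathcal{X}})\to\Delta$ of $X_0$. The generic fiber $X_z$ for $z\neq 0$ is then symplectomorphic to the original $(X,\omega)$ by the standard inverse-of-cut identification used in \cite{FT, FTZ}, establishing (1) and (2) with $X_-=Q_n$ and $X_0=X_-\cup_D X_+$.

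Finally I would construct the Lagrangian subfibration $\mathcal{S}\subset\mathcal{X}$. The key geometric observation is that the Lagrangian sphere $S$ lies in the zero section of $D_{r'}T^*S^n$, which under the cut sits inside $X_-=Q_n$ as $Q_{n,\mathbb{R}}$ and is disjoint from the hypersurface $D$ where the symplectic sum is performed. Hence in a neighborhood of the gluing region we can take $\mathcal{S}$ to be empty, and away from that region we extend $S$ trivially using the product structure of the smoothing outside the surgery locus; concretely, one uses the standard fact that the symplectic sum is the identity away from a tubular neighborhood of $D$, so the family $\mathcal{S}\to\Delta$ is (non-canonically) diffeomorphic to $S\times\Delta$, is Lagrangian in each fiber, and restricts to $Q_{n,\mathbb{R}}$ at $z=0$. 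This gives (3).

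The main technical point, which I would treat by quoting \cite{FT, FTZ}, is checking that the symplectic structures on the fibers $X_z$ can be chosen so that the transported copy of $S$ remains genuinely Lagrangian for all $z$; since the surgery is supported away from $S$, this reduces to keeping the symplectic form constant near $S$, which is achieved by the standard cut-off constructions in the symplectic sum. The construction of the underlying topological family and the identification of $X_-$ with $Q_n$ is essentially formal once the Weinstein neighborhood and the collapse-to-$Q_n$ statement recalled above are in place.
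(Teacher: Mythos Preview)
Your proposal is correct and outlines exactly the construction carried out in the references. Note that the paper itself does not prove this proposition at all: it is stated as a summary of Proposition~2.1 in \cite{FT} and Proposition~3.1 in \cite{FTZ}, with no argument given beyond the preceding paragraph recalling that collapsing the circle orbits on a sphere bundle in $T^*S^n$ yields $(Q_n,D_n)$. Your sketch---symplectic cut along a level set of the $S^1$-moment map in a Weinstein neighborhood, identification of the inside piece with $(Q_n,\omega_{FS},D_n,Q_{n,\mathbb{R}})$, dual normal bundles along $D$, Gompf/Ionel--Parker symplectic sum to produce the smoothing family, and trivial extension of $S$ away from the surgery region---is precisely the content of those cited results, so there is nothing to compare.
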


Next we specify the almost complex structures we will use on this fibration. An almost complex structure $J$ on the fibration $\pi: (\mathcal{X}, \omega_{\mathcal{X}}) \rightarrow \Delta$ is said to be \textit{admissible} if
\begin{enumerate}
\item it is compatible with $\omega_{\mathcal{X}}$ and preserves ker $d\pi$;
\item it restricts to an almost complex structure on the singular locus $D$ of $X_{0}$ and satisfies that
$$
N_{J}(u, v)\in T_{x}D \quad \forall u\in T_{x}D, v\in T_{x}X_{0}, x\in  D
$$
where $N_{J}$ is the Nijenhuis tensor of $J$.
\end{enumerate}
We denote the set of all admissible almost complex structures on $\mathcal{X}$ by $\mathcal{J}_{\mathcal{X}}$ and the subset of $l$-differentiable elements by $\mathcal{J}^{l}_{\mathcal{X}}$. Both spaces $\mathcal{J}_{\mathcal{X}}$ and $\mathcal{J}^{l}_{\mathcal{X}}$ are non-empty and path-connected.

With respect to an admissible almost complex structure we can compare the first Chern numbers and Maslov indices between $(X, \omega)$ and $(X_{\pm}, \left.\omega_{\mathcal{X}}\right|_{X_{\pm}})$. Let $\beta \in H_{2}(X_{-}, S; \mathbb{Z})$ and $A\in H_{2}(X_{+}; \mathbb{Z})$ such that $\beta\cdot_{X_{-}}D=A\cdot_{X_{+}}D$ then we can deform the connected sum of $\beta$ and $A$ to be a homology class $\beta+A\in H_{2}(X, S; \mathbb{Z})$ in the smooth fiber. Note that $\partial\beta=0$ so the pairings $\langle c_{1}(TX_{-}), \beta\rangle$ and $\langle c_{1}(TX), \beta+A\rangle$ are well-defined and we have the following relation.

\begin{proposition}
With the above notation,
\begin{equation}\label{index}
\begin{split}
\langle c_{1}(TX), \beta+A\rangle&= \langle c_{1}(TX_{-}), \beta\rangle+ \langle c_{1}(TX_{+}), A\rangle- 2A\cdot_{X_{+}}D\\
&= \langle c_{1}(TX_{+}), A\rangle +(n-2)A\cdot_{X_{+}}D
\end{split}
\end{equation}
\end{proposition}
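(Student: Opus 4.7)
The plan is to derive both equalities from the standard behavior of the first Chern class under the symplectic sum, combined with an explicit computation on the quadric model furnished by part (3) of Proposition 3.1.

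For the first equality, the geometric input is that $X$ is obtained by smoothing the nodal central fiber $X_0 = X_- \cup_D X_+$ with the matching intersection $\beta \cdot_{X_-} D = A \cdot_{X_+} D$. I would recall the splitting $TX_\pm|_D = TD \oplus N_{D/X_\pm}$ together with the duality $N_{D/X_-} \otimes N_{D/X_+} \cong \mathcal{O}_D$ which holds on a symplectic sum. The relative tangent bundle $T_\pi$ of the fibration $\pi : \mathcal{X} \to \Delta$ restricts to $TX_t$ on a smooth fiber and, on the central fiber, behaves exactly like the relative tangent sheaf of a nodal curve degeneration: its pairing with the glued class $\beta + A$ picks up an adjunction-type contribution of $-2$ at the node along $D$. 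This yields
\begin{equation*}
\langle c_1(TX), \beta+A\rangle \;=\; \langle c_1(TX_-), \beta\rangle + \langle c_1(TX_+), A\rangle - 2\,(A \cdot_{X_+} D),
\end{equation*}
which is the first equality. Concretely I would fix a tubular neighborhood of $D$ in $\mathcal{X}$, disjoint from $\mathcal{S}$, and compute the local plumbing contribution to $c_1(T_\pi)$, as in the symplectic cut-and-sum computations of \cite{FT, FTZ}.

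For the second equality I specialize to the quadric $X_- = Q_n \subset \mathbb{C}P^{n+1}$ with $D = D_n$ a hyperplane section and $\mathcal{S}_0 = Q_{n,\mathbb{R}}$. The adjunction formula for a smooth degree-two hypersurface gives $c_1(TQ_n) = n\,h$, where $h$ is the restriction of the hyperplane class of $\mathbb{C}P^{n+1}$. Since $D_n$ is itself a hyperplane section it represents $h$, and on $Q_{n,\mathbb{R}}$ the condition $z_0 = 0$ forces $\sum_j z_j^2 = 0$ with real $z_j$, hence $z_j = 0$ for all $j$, which is impossible. Thus $Q_{n,\mathbb{R}} \cap D_n = \emptyset$, so $\beta \cdot_{X_-} D_n$ is well-defined on the relative class $\beta$, and
\begin{equation*}
\langle c_1(TX_-), \beta\rangle \;=\; n\,(\beta \cdot_{X_-} D_n) \;=\; n\,(A \cdot_{X_+} D).
\end{equation*}
Substituting this into the first equality and collecting terms produces the second.

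The main obstacle is the careful justification of the first equality in the presence of the Lagrangian boundary $S \subset X_-$: one must check that $\beta + A$ is unambiguously defined as a relative class in $H_2(X, S; \mathbb{Z})$ and that the nodal adjunction contribution is still $-2\,(A \cdot D)$ in this relative setting. This reduces to the closed case once one observes that $D$ lies in a tubular neighborhood disjoint from $\mathcal{S}$ and that $c_1(TX_-)|_S = 0$, since the complex tangent bundle of a Lagrangian is the complexification of a real bundle; the remaining details can then be imported from \cite{FT, FTZ}.
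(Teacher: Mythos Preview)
Your proposal is correct and in fact supplies more than the paper does: the paper does not give a proof of this proposition at all but simply records it as a summary of Proposition 2.1 in \cite{FT} and Proposition 3.1 in \cite{FTZ}. Your argument is exactly the standard one underlying those references---the symplectic-sum Chern class formula for the first line, and the adjunction computation $c_1(TQ_n)=n\,h$ on the quadric for the second---so there is no divergence in strategy, only in level of detail.
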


We remark that the first line of (\ref{index}) is a general formula and the second line uses that $X_{-}$ is the quadric hypersurface.

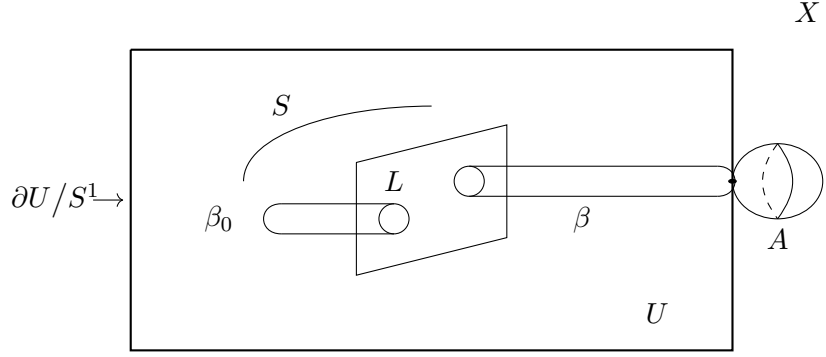
\begin{figure}
  \begin{tikzpicture}[xscale=1, yscale=0.5]
  \draw (-1,1)--(1,2)--(1,-1)--(-1,-2)--(-1,1);
  \draw [thick] (-4,4)--(4,4)--(4,-4)--(-4,-4)--(-4,4);
  \draw (-2.5,0.5) to [in=180,out=90] (0,2.5);
  \node [above] at (-2,2) {$S$};
  \draw (0.5,0.5) ellipse (0.2 and 0.4);
  \draw (0.5,0.9)--(3.8,0.9);
  \draw (0.5,0.1)--(3.8,0.1);
  \draw (3.8,0.9) to [out=0,in=0] (3.8,0.1);
  \draw (4.6,0.5) ellipse (0.6 and 1);
  \draw [fill] (4,0.5) circle [radius=0.05];
  \draw (4.6,1.5) to [out=290,in=70] (4.6,-0.5);
  \draw [dashed] (4.6,1.5) to [out=250,in=110] (4.6,-0.5);
  \draw (-0.5,-0.5) ellipse (0.2 and 0.4);
  \draw (-0.5,-0.9)--(-2,-0.9);
  \draw (-0.5,-0.1)--(-2,-0.1);
  \draw (-2,-0.9) to [out=180,in=180] (-2,-0.1);
  \node at (3,-3) {$U$};
  \node at (-5,0) {$\partial U\big/S^{1}$};
  \draw [->] (-4.5,0)--(-4.1,0);
  \node at (-0.5,0.5) {$L$};
  \node [left] at (-2.5,-0.5) {$\beta_{0}$};
  \node [below] at (2,0.1) {$\beta$};
  \node [below] at (4.6,-0.5) {$A$};
  \node at (5,5) {$X$};
  \end{tikzpicture}
  \caption{Degeneration of a holomorphic disk.}
\end{figure}

Next we use the symplectic cut and sum construction to analyze holomorphic disks with boundaries on a Lagrangian submanifold near a Lagrangian sphere, with respect to some almost complex structure. In \cite{OU} Oakley-Usher constructed many families of monotone nondisplaceable Lagrangian submanifolds in $T^{*}S^{n}$.

\begin{theorem}(Oakley-Usher)
There exist continuum families of monotone Lagrangian submanifolds
$$
L^{\lambda}_{k, m}\cong (S^{1}\times S^{k}\times S^{m})\big/\mathbb{Z}_{2}, \quad k,m \in\mathbb{Z}_{+}, k\leq m, k+m=n-1, \lambda \in(0, +\infty) \subset\mathbb{R}
$$
with non-zero Floer cohomology in $T^{*}S^{n}$.
\end{theorem}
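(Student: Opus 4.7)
The plan is to recall the explicit construction from Oakley-Usher \cite{OU} and verify the three claims (monotonicity, topology, non-vanishing Floer cohomology). I would start by realizing $T^{*}S^{n}$ as $\{(q,p)\in \mathbb{R}^{n+1}\times \mathbb{R}^{n+1} : |q|=1,\ \langle q,p\rangle =0\}$ with its canonical symplectic form, and note that the natural $SO(n+1)$-action lifts to a Hamiltonian action. For each decomposition $\mathbb{R}^{n+1}=\mathbb{R}^{k+1}\oplus \mathbb{R}^{m+1}$ with $k+m=n-1$ and $k\leq m$, restrict attention to the residual $SO(k+1)\times SO(m+1)$-symmetry; the desired Lagrangians will be $SO(k+1)\times SO(m+1)$-invariant.

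Next, I would describe the one-parameter family: after symplectic reduction by $SO(k+1)\times SO(m+1)$, the quotient is a two-dimensional symplectic space on which a particular Hamiltonian circle action (related to geodesic flow) admits a family of closed orbits parametrized by the moment value $\lambda \in (0,+\infty)$. Pulling back each such orbit to $T^{*}S^{n}$ and taking the full $SO(k+1)\times SO(m+1)$-orbit produces a Lagrangian with total space fibered by $S^{k}=SO(k+1)/SO(k)$ and $S^{m}=SO(m+1)/SO(m)$ factors over a circle, and quotienting by the natural $\mathbb{Z}_{2}$ identifying the two hemispheres gives the diffeomorphism type $(S^{1}\times S^{k}\times S^{m})/\mathbb{Z}_{2}$. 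Distinct $\lambda$ give Lagrangians with distinct symplectic areas of the class bounded by the $S^{1}$-factor and the zero section, hence they are not Hamiltonian isotopic, yielding the continuum.

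Monotonicity can then be checked directly on the invariant model: generators of $\pi_{2}(T^{*}S^{n}, L^{\lambda}_{k,m})$ are given by an explicit Maslov-two disk class (the ``thimble'' over the $S^{1}$-orbit capped off by the reduction) together with classes swept out by the $SO$-orbits, and on each generator one compares $\omega$ and the Maslov index by a direct computation which gives proportionality with ratio a nonzero function of $\lambda$.

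The main obstacle is the non-vanishing of Floer cohomology, and this is where I expect to spend most of the work. The plan is to follow Oakley-Usher in exhibiting a weak bounding cochain $b\in H^{1}(L^{\lambda}_{k,m};\Lambda_{0})/H^{1}(L^{\lambda}_{k,m};2\pi\sqrt{-1}\mathbb{Z})$ such that $\mathfrak{m}_{1}^{b}=0$ on a nonzero class. Equivalently one enumerates the Maslov-two $J$-holomorphic disks (for a carefully chosen symmetric $J$) with boundary on $L^{\lambda}_{k,m}$, assembles the leading-order disk potential $\mathfrak{PO}(b)$, and verifies by elementary calculus that its critical-point equation $d\mathfrak{PO}=0$ has a solution. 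Regularity and the classification of Maslov-two disks are the key technical inputs; both are handled in \cite{OU} by the $SO(k+1)\times SO(m+1)$-symmetry, which reduces the moduli problem to a finite enumerative computation and forces transversality on the invariant locus.
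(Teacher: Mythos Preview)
The paper does not prove this theorem at all. It is stated with attribution to Oakley--Usher \cite{OU} and used as a black-box input: the very next result (Corollary 3.8 / Theorem 1.5) takes these families $L^{\lambda}_{k,m}$ as given and applies the paper's degeneration argument to show they remain nondisplaceable in a Calabi--Yau or negatively monotone ambient manifold. So there is nothing in the paper to compare your proposal against.

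Your sketch is a plausible reconstruction of the Oakley--Usher argument, but since the present paper neither reproduces nor summarizes their proof, whether your outline matches \cite{OU} in detail (e.g.\ the precise mechanism for the disk count and the form of the potential) is a question about that paper, not this one. For the purposes of this manuscript, no proof is expected here; a citation suffices.
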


Then for a closed symplectic manifold $X$ containing a Lagrangian sphere $S$, a sub-family of these Lagrangian submanifolds sits inside a Weinstein neighborhood $U$ of $S$. In the following we will show that when the symplectic manifold is Calabi-Yau or negatively monotone, any Lagrangian submanifolds in $U$ does not bound $J$-holomorphic disks which are not totally contained in $U$, for some $J$. Hence we get continuum families of nondisplaceable Lagrangian submanifolds stated in Theorem \ref{cy}.

\begin{theorem}(Theorem \ref{cy})
For any integer $n\geq 3$, let $(X^{2n}, S^{n}, \omega)$ be a Calabi-Yau or negatively monotone symplectic manifold with a Lagrangian sphere. Then there are continuum families of Lagrangian submanifolds
$$
L^{\lambda}_{k, m}\cong (S^{1}\times S^{k}\times S^{m})\big/\mathbb{Z}_{2}, \quad k,m \in\mathbb{Z}_{+}, k\leq m, k+m=n-1, \lambda\in (0, \lambda_{0}] \subset\mathbb{R}
$$
near the Lagrangian sphere $S$ and are nondisplaceable in $X$.
\end{theorem}
\begin{proof}
The proof is based on a dimension-counting argument, similar to the proof of Theorem 1.1 in \cite{FT}. Let $(\mathcal{X}, \omega_{\mathcal{X}})$ be the fibration constructed above and $\mathcal{J}_{\mathcal{X}}$ be the set of all admissible almost complex structures on $\mathcal{X}$. For an admissible almost complex structure $J\in \mathcal{J}_{\mathcal{X}}$, let $J_{z}$ be the restriction of $J$ on $X_{z}$. Let $L^{\lambda}_{z}$ be the image of $L^{\lambda}_{k, m}$ in $X_{z}$. For small $\lambda$, we have that $L^{\lambda}_{0}\subset X_{-}$.

We will study the limit of holomorphic disks from smooth fibers to the central fiber $X_{0}=X_{-}\cup_{D}X_{+}$. In particular, we will show that for $z$ sufficiently close to $0$, the Lagrangian submanifold $L^{\lambda}_{z}\subset X_{z}$ does not bound any $J_{z}$-holomorphic disk which are not inside the Weinstein neighborhood of $S$, if $J$ is generic enough.

Let $\mathcal{M}^{reg}(X_{+}, A, J_{+})$ be the moduli space of somewhere injective $J_{+}$-holomorphic curves of class $A\in H_{2}(X_{+}; \mathbb{Z})$, where $J_{+}= \left. J\right|_{X_{+}}$. Then classic result shows that there is a dense subset $\mathcal{J}^{reg}_{\mathcal{X}}\subset \mathcal{J}_{\mathcal{X}}$ such that $\mathcal{M}^{reg}(X_{+}, A, J_{+})$ is a smooth manifold of dimension
$$
\dim _{\mathbb{R}}\mathcal{M}^{reg}(X_{+}, A, J_{+})= 2n-6+ 2\langle c_{1}(TX_{+}), A\rangle
$$
for $J\in \mathcal{J}^{reg}_{\mathcal{X}}$ and $J_{+}= \left. J\right|_{X_{+}}$. In particular, if $0> 2n-6+ 2\langle c_{1}(TX_{+}), A\rangle$ then the moduli space $\mathcal{M}^{reg}(X_{+}, A, J_{+})$ is empty.

Next let $z_{i}\in \Delta$ be a sequence converging to 0 and $J_{i}$ be the restriction of $J$ on $X_{z_{i}}$. Consider a sequence of $J_{i}$-holomorphic disks of class $\beta$ in $X_{z_{i}}$, with boundary on $L^{\lambda}_{z_{i}}$. Then by Gromov compactness we get a nodal disk $u$ in $X_{0}=X_{-}\cup_{D}X_{+}$ with boundary on $L^{\lambda}_{0}$. In general this nodal curve at the Gromov limit can have several components and some of them might lie inside $D$ or is a multiple cover of a somewhere injective curve in $X_{+}$.

Suppose that $u$ has a component in $X_{+}$ which are not contained in $D$. Let $u'$ be its underlying somewhere injective curve, representing a class $A \in H_{2}(X_{+}; \mathbb{Z})$. Note that $J$ is admissible therefore the image of $u'$ intersects with $D$ in a finite set with positive multiplicities. That is, $A\cdot_{X_{+}}D= s>0$. Then we choose $B\in H_{2}(X_{-}; \mathbb{Z})$ such that $B\cdot_{X_{-}}D= s$. The class $A+B$ can be deformed into a class $A\#B \in H_{2}(X_{z}; \mathbb{Z})$ in the smooth fiber $X_{z}$. By (\ref{index}) we have that
\begin{equation}
\begin{split}
\langle c_{1}(TX_{z}), A\#B\rangle&= \langle c_{1}(TX_{-}), B\rangle+ \langle c_{1}(TX_{+}), A\rangle- 2A\cdot_{X_{+}}D\\
&= \langle c_{1}(TX_{+}), A\rangle +(n-2)s.
\end{split}
\end{equation}
Note that $X_{-}$ is a monotone symplectic manifold and $B\cdot_{X_{-}}D= s>0$, the symplectic area of $B$ is positive. Hence the class $A\#B$ has positive area in $X_{z}$. Since we assume that $X=X_{z}$ is Calabi-Yau or negatively monotone, we have that $0\geq \langle c_{1}(TX_{z}), A\#B\rangle$. Combining this with above equality and $s\geq 1$ we get
\begin{equation}
\begin{split}
2\langle c_{1}(TX_{+}), A\rangle +2n-6 &\leq 2(2-n)s+ 2n- 6\\
&\leq 2(2-n)+ 2n -6 =4-6 =-2 <0.
\end{split}
\end{equation}

Therefore when $J\in \mathcal{J}^{reg}_{\mathcal{X}}$ the moduli space $\mathcal{M}^{reg}(X_{+}, A, J_{+})$ is empty, a contradiction to that $u'$ is a $J_{+}$-holomorphic curve in $X_{+}$. Actually we can make the moduli spaces $\mathcal{M}^{reg}(X_{+}, A, J_{+})$ to be empty for all classes $A$ with a uniform area bound. Then by Gromov compactness $L^{\lambda}_{z}\subset X_{z}$ does not bound any $J_{z}$-holomorphic disk which are not inside the Weinstein neighborhood $U$ of $S$ and with energy less than the area of $A$, when $z$ is small enough. Otherwise we get the non-trivial curve $u'$ in $X_{+}$.

In conclusion, for each positive number $E$, there is an admissible almost complex structure $J^{E}$ such that $L^{\lambda}_{z}\subset X_{z}$ does not bound any non-constant $J^{E}_{z}$-holomorphic disk which are not inside $U$ and with area smaller than $E$, when $z$ is small. In order to achieve transversality, we only need to perturb $J_{+}$ or $J_{z}$ outside $U$, while keeping $J_{z}$ in $U$ as some fixed almost complex structure. For example, we can take the almost complex structure used by Oakley-Usher to compute the Floer cohomology in $U$. Then we know that $L^{\lambda}_{z}$ only bounds non-constant $J_{z}$-holomorphic disk in $U$, which gives a non-trivial Floer cohomology modulo energy $E$. Since the above argument works for any $E>0$, we obtain that $L^{\lambda}_{z}$ is nondisplaceable in $X_{z}=X$.
\end{proof}

Oakley and Usher proved that if we compactify the cotangent bundle to be the quadric then $L_{0, m}$ is displaceable in $Q_{m+2}$ for $m\geq 2$. This matches the discussion above that when the ambient space is monotone there will be holomorphic disks coming from outside, which may break the Floer cohomology. The major task of following sections will be studying possible deformations of Floer cohomology to deal with those outside contributions.

One may also use the neck-stretching technique in symplectic field theory to prove above results. For example when $\dim S=3$, on the contact hypersurface $\partial U$, there is a contact form such that the minimal Conley-Zehnder index of Reeb orbits is two. Starting with a holomorphic disk with boundary on a Lagrangian $L^{\lambda}_{k, m}$, we stretch along $\partial U$ and get a holomorphic building. The $X_{+}$-part of the holomorphic building will be a holomorphic curve with several negative punctures, having a non-positive Chern number. The dimension of the moduli space of such curves (or its underlying somewhere injective curves) in $X_{+}$ is negative.

\subsection{Welschinger invariants and the pearl complex}
Now we change the gear to compare the open Gromov-Witten invariants defined by Welschinger \cite{W} and the Floer differential in the pearl complex. The former count of holomorphic disks is weighted by linking numbers, and the later is weighted by number of Morse flow lines. An observation is that these two weights are related, and we will compare them up to signs. The conventions of orientations of moduli spaces on each side are in \cite{W}, and in the Appendix A of \cite{BC3} respectively. We expect that after a careful comparison of signs, the results in this subsection have some generalizations with integer coefficients.

We first review the construction in \cite{W}. For a closed oriented Lagrangian 3-manifold $L$ of a symplectic 6-manifold $X$, such that $L$ does not bound Maslov zero holomorphic disks for some $J$ and $H_{1}(L; A)$ injects into $H_{1}(X;A)$ for some commutative ring $A$, Welschinger defines enumerative invariants which count holomorphic disks with boundary on $L$. Here we will focus on a special case of a monotone Lagrangian sphere $S$ in a monotone symplectic 6-manifold $X$.

Since $S$ is simply-connected, we can define Welschinger invariants with value in any commutative ring. We will mainly use $\mathbb{Z}$ or $\mathbb{Z}_{2}$ coefficients. And the final conclusion of the relation between the Welschinger invariants and the Floer differentials will be over $\mathbb{Z}_{2}$ coefficients.

Fix an orientation and (the unique) spin structure on $S$. For a class $\beta\in \pi_{2}(X,S)$ and a generic compatible almost complex structure $J$, we write $\mathcal{P}^{\beta}_{r}(X, S; J)$ as the moduli space of simple $J$-holomorphic disks with boundary on $S$, representing the class $\beta$, and with $r$ marked points on the boundary. And we write $\mathcal{M}^{\beta}_{r}(X, S; J)$ as the quotient of $\mathcal{P}^{\beta}_{r}(X, S; J)$ by the automorphism group of the unit disk. For generic $J$, the moduli space $\mathcal{M}^{\beta}_{r}(X, S; J)$ is an oriented manifold of dimension $\mu(\beta)+r$, with an evaluation map to $S^{r}$. Then we compactify this moduli space and still write it as $\mathcal{M}^{\beta}_{r}(X, S; J)$.

Next we introduce the moduli spaces of nodal disks and multi-disks. Let $\beta_{1}, \beta_{2}\in \pi_{2}(X,S)$ be two disk classes. We denote by $\mathcal{P}^{(\beta_{1}, \beta_{2})}_{0}(X, S; J)$ the fiber product $\mathcal{P}^{\beta_{1}}_{0}(X, S; J)_{ev_{1}}\times _{ev_{-1}}\mathcal{P}^{\beta_{2}}_{0}(X, S; J)$, by using the evaluation maps at $-1$ and $1$. And we define
$$
\mathcal{P}^{(\beta_{1}, \beta_{2})}_{r}(X, S; J)= \mathcal{P}^{(\beta_{1}, \beta_{2})}_{0}(X, S; J)\times ((\partial \Delta_{0}-\lbrace\text{node}\rbrace)^{r}- \text{diag}_{\Delta_{0}}).
$$
Here $\Delta_{0}$ is the nodal disk as the closure of $(\mathbb{H}\times \lbrace 0\rbrace)\cup (\lbrace 0\rbrace\times \mathbb{H})\subset \mathbb{C}^{2}$ in $\mathbb{C}P^{2}$, and $\text{diag}_{\Delta_{0}}$ is the big diagonal. We then denote by
$$
\mathcal{M}^{(\beta_{1}, \beta_{2})}_{r}(X, S; J)= \mathcal{P}^{(\beta_{1}, \beta_{2})}_{r}(X, S; J)/ Aut(\Delta_{0}).
$$
By Gromov compactness and gluing theorems, the moduli space $\mathcal{M}^{(\beta_{1}, \beta_{2})}_{r}(X, S; J)$ canonically identifies as a component of the boundary of the (compactified) moduli space $\mathcal{M}^{\beta_{1}+\beta_{2}}_{r}(X, S; J)$. In the notation of \cite{W}, their orientations differ by $-1$, see Proposition 2 in \cite{W}.

Let $\beta_{1}, \beta_{2}, \cdots, \beta_{n}\in \pi_{2}(X,S)$ be disk classes. We define $\mathcal{P}^{\beta_{1}, \beta_{2}, \cdots, \beta_{n}}_{0}(X, S; J)$ as the direct product of $\mathcal{P}^{\beta_{i}}_{0}(X, S; J)$'s. Likewise, we define $\mathcal{M}^{\beta_{1}, \beta_{2}, \cdots, \beta_{n}}_{r}(X, S; J)$ as the quotient
$$
\mathcal{M}^{\beta_{1}, \beta_{2}, \cdots, \beta_{n}}_{r}(X, S; J)= (\mathcal{P}^{\beta_{1}, \beta_{2}, \cdots, \beta_{n}}_{0}(X, S; J)\times ((\partial \Delta \cup \cdots \cup \partial \Delta )^{r}- \text{diag}_{\partial\Delta}))/(Aut(\Delta))^{n}.
$$
Here $\Delta$ is the closed unit disk in $\mathbb{C}$ and $\text{diag}_{\partial\Delta}$ is the big diagonal of $\partial \Delta \cup \cdots \cup \partial \Delta$. For every $n\geq 2$, we denote by $\mathcal{M}^{(\beta_{1}, \beta_{2}), \cdots, \beta_{n}}_{0}(X, S; J)$ as the direct product
$$
\mathcal{M}^{(\beta_{1}, \beta_{2}), \cdots, \beta_{n}}_{0}(X, S; J)= \mathcal{M}^{(\beta_{1}, \beta_{2})}_{0}(X, S; J)\times \mathcal{M}^{\beta_{3}}_{0}(X, S; J)\times \cdots \mathcal{M}^{\beta_{n}}_{0}(X, S; J).
$$
And likewise we define $\mathcal{M}^{(\beta_{1}, \beta_{2}), \cdots, \beta_{n}}_{r}(X, S; J)$. The moduli space $\mathcal{M}^{(\beta_{1}, \beta_{2}), \cdots, \beta_{n}}_{r}(X, S; J)$ can be identified as a codimension one part of the moduli space $\mathcal{M}^{\beta_{1}, \beta_{2}, \cdots, \beta_{n}}_{r}(X, S; J)$, after Gromov compactification. See Section 2.4 in \cite{W} for the compatibilities of orientations between product, forgetful map and subspaces.

Now we denote by $\mathcal{M}^{\beta_{1},\cdots, \beta_{n}}_{r, int}(X,S;J)$ the dense open subset of $\mathcal{M}^{\beta_{1},\cdots, \beta_{n}}_{r}(X,S;J)$ made of disks whose $n$ boundary components have pairwise disjoint images in $L$. It is equipped with a boundary map which sends a holomorphic map $u$ to $u(\partial\Delta \cup \cdots \cup \partial\Delta)$. Hence we can view $u(\partial\Delta \cup \cdots \cup \partial\Delta)$ as a link in $S$ with $n$ components, oriented by the boundary orientation as the boundary of a holomorphic multi-disk.

In our setting, we are interested in disk classes $\beta\in \pi_{2}(X,S)$ with Maslov index four. Since $S$ is orientable and monotone, the class $\beta$ can only split into two classes with Maslov index two. So we only need the case where there are at most two components. Then we can define a linking weight on the moduli space $\mathcal{M}^{\beta_{1},\cdots, \beta_{n}}_{r, int}(X,S;J)$ in a simpler way. It is a locally constant function
$$
lk_{n}: \mathcal{M}^{\beta_{1},\cdots, \beta_{n}}_{r, int}(X, S;J)\rightarrow \mathbb{Z}.
$$
When $n=1$, there is only one component of $\mathcal{M}^{\beta_{1}}_{r, int}(X,S;J)$, we define $lk_{1}=1$ be the constant function. When $n=2$, for an element $u\in\mathcal{M}^{\beta_{1},\beta_{2}}_{r, int}(X, S;J)$ we define $lk_{2}(u)=lk(u(\partial \Delta \cup \partial\Delta))$, the linking number of two boundary components of $u$. Now let $\beta$ be a disk class of Maslov index four. We set
\begin{equation}\label{Wel-invariant}
[\mathcal{M}_{\beta,2}(X,S;J)]:= \sum_{n=1}^{2} \dfrac{1}{n!}\sum_{\beta_{1}+\cdots +\beta_{n}=\beta} lk_{n}[\mathcal{M}^{\beta_{1},\cdots, \beta_{n}}_{2, int}(X,S;J)]
\end{equation}
to define the two-pointed Welschinger invariants. Here $lk_{n}[\mathcal{M}^{\beta_{1},\cdots, \beta_{n}}_{2, int}(X,S;J)]$ is defined as the linking number $lk_{n}(\mathcal{M}^{\beta_{1},\cdots, \beta_{n}}_{2, int}(X,S;J))$ times the fundamental class $[\mathcal{M}^{\beta_{1},\cdots, \beta_{n}}_{2, int}(X,S;J)]$.

\begin{theorem}(Theorem 2, \cite{W})
Let $S$ be a monotone Lagrangian 3-sphere in a symplectic 6-manifold $X$. For a Maslov four disk class $\beta$, the chain
$$
ev_{*}[\mathcal{M}_{\beta,2}(X,S;J)]:= \sum_{n=1}^{2} \dfrac{1}{n!}\sum_{\beta_{1}+\cdots +\beta_{n}=\beta} ev_{*}(lk_{n}[\mathcal{M}^{\beta_{1},\cdots, \beta_{n}}_{2, int}(X,S;J)])
$$
is a cycle whose homology class in $H_{6}(S\times S; \mathbb{Z})$ does not depend on the generic choice of $J$.
\end{theorem}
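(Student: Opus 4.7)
The plan is to show that $\partial\bigl(ev_{*}[\mathcal{M}_{\beta,2}(X,S;J)]\bigr)=0$ as a chain in $S\times S=S^{3}\times S^{3}$ by enumerating the codimension-one boundary strata of the moduli spaces appearing in the definition and matching them in cancelling pairs, and then to run a parametrized version of the same argument for a path of almost complex structures to deduce independence of the generic $J$.

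First I would verify dimensions. For $\mu(\beta)=4$, a single simple disk with two boundary marked points has moduli dimension $n+\mu(\beta)-3+r=3+4-3+2=6$, and a disjoint pair of Maslov-two disks with two total boundary marks also has moduli dimension $6$, so each $ev_{*}[\mathcal{M}^{\beta_{1},\dots,\beta_{k}}_{2,int}]$ is a genuine $6$-chain in $S\times S$. Next I would enumerate codimension-one degenerations of the Gromov compactification: (a) for $k=1$, a disk bubble splitting $\beta=\beta_{1}+\beta_{2}$ joined at a boundary node, where positivity of the Maslov index forces each $\mu(\beta_{i})\geq 2$, hence both equal to $2$; (b) for $k=2$, the two boundary circles collide at a single transverse point, moving out of the open locus $\mathcal{M}^{\beta_{1},\beta_{2}}_{2,int}$. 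Sphere bubbles and non-simple configurations are ruled out for generic admissible $J$ by the usual dimension count together with the simple condition; disks of Maslov $\leq 0$ are excluded analogously.

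The key cancellation is between strata (a) and (b): both parametrize the same geometric object, namely a pair of Maslov-two $J$-holomorphic disks attached at a single boundary point of $S^{3}$. The point is that on the two sides of the collision stratum (b), the boundary configuration consists of two disjoint knots in $S^{3}$ whose linking numbers differ by exactly $\pm 1$ (the sign being the local crossing sign). Consequently the contribution of $\tfrac{1}{2!}\,lk_{2}\cdot[\mathcal{M}^{\beta_{1},\beta_{2}}_{2,int}]$ to its own boundary at this stratum is, after summing the two orderings of $(\beta_{1},\beta_{2})$, exactly $\pm 1$ times the fundamental class of the bubble stratum; this cancels the boundary contribution of $[\mathcal{M}^{\beta}_{2}]$ coming from (a) with opposite orientation. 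One must also check the degenerate cases where a marked point coincides with the node and where $\beta_{1}=\beta_{2}$, both of which are handled by a local model analysis in the Deligne–Mumford-type compactification.

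For $J$-independence, given two generic $J_{0},J_{1}\in\mathcal{J}^{reg}$ I would pick a generic smooth path $\{J_{t}\}$ and form the parametrized moduli $\mathcal{M}^{\beta_{1},\dots,\beta_{k}}_{2,int}(X,S;\{J_{t}\})$, a $7$-dimensional chain in $[0,1]\times S\times S$. The same boundary analysis applied to this chain gives the cobordism $[\mathcal{M}_{\beta,2}(X,S;J_{1})]-[\mathcal{M}_{\beta,2}(X,S;J_{0})]$ plus boundary contributions that cancel by the same mechanism; hence the two evaluation cycles are homologous. The main obstacle is the orientation bookkeeping in the cancellation of step three, which requires matching the gluing orientation on the disk-bubble stratum (coming from the spin structure on $S^{3}$ and the relative spin orientation on the disk moduli) with the orientation jump of the linking-number weight across a transverse crossing of two knots in $S^{3}$; this coherence calculation is essentially the content of Welschinger's original argument in \cite{W}.
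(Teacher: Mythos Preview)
The paper does not give its own proof of this statement: Theorem~3.9 is stated with the attribution ``(Welschinger, \cite{W})'' and is simply quoted from Welschinger's original paper, with no argument supplied. So there is no in-paper proof to compare your proposal against.

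That said, your sketch is a faithful outline of Welschinger's own argument in \cite{W}. The mechanism you describe --- that the codimension-one bubble stratum of the $k=1$ moduli space coincides set-theoretically with the collision wall in the $k=2$ moduli space, and that the $\pm 1$ jump of the linking number across a transverse crossing of the two boundary knots exactly compensates the bubble contribution --- is precisely the cancellation that makes the weighted sum a cycle. Your parametrized cobordism for $J$-independence is likewise the standard second step. The one place where your write-up is slightly imprecise is in saying that strata (a) and (b) ``parametrize the same geometric object'': more accurately, the wall in (b) is a codimension-one locus inside the closure of $\mathcal{M}^{\beta_{1},\beta_{2}}_{2,int}$ across which $lk_{2}$ jumps, and the boundary of the $lk_{2}$-weighted chain picks up this wall with coefficient equal to the jump; it is this wall, not a boundary stratum in the usual sense, that matches the nodal stratum from (a). As you correctly note at the end, the genuine work is the orientation comparison, and that is carried out in \cite{W}.
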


The idea of the proof is the following. First we have a main component $\mathcal{M}^{\beta}_{2, int}(X,S;J)$ of which the codimension one boundaries are $\pm\mathcal{M}^{(\beta_{1}, \beta_{2})}_{2, int}(X,S;J)$ with $\beta= \beta_{1}+ \beta_{2}$. Then for each pair of $\beta_{1}, \beta_{2}$ we have a moduli space of multi-disks $\mathcal{M}^{\beta_{1}, \beta_{2}}_{2, int}(X,S;J)$ of which the codimension one boundaries are also $\pm\mathcal{M}^{(\beta_{1}, \beta_{2})}_{2, int}(X,S;J)$, weighted by linking numbers. After unioning all these moduli spaces together, all the codimension one boundary cancel by a careful check of signs and linking numbers. Hence we get a moduli space without codimension one boundary and the homology class given by evaluation maps does not depend on various choices. We also remark that the factor $\frac{1}{n!}$ is used to compensate the permutations of $\beta_{1}, \cdots, \beta_{n}$. For example, both $\mathcal{M}^{\beta_{1}, \beta_{2}}_{2, int}(X,S;J)$ and $\mathcal{M}^{\beta_{2}, \beta_{1}}_{2, int}(X,S;J)$ appear in the formula, but as a boundary of the main component, the moduli space $\mathcal{M}^{(\beta_{1}, \beta_{2})}_{2, int}(X,S;J)$ only appears once.

Then the two-pointed Welschinger invariant of class $\beta$ is defined as
$$
n_{2, \beta}^{W}:=\langle ev_{*}[\mathcal{M}_{\beta,2}(X,S;J)], PD[pt]\cup PD[pt]\rangle \in \mathbb{Z},
$$
which is independent of a generic choice of $J$.

Next we review the pearl complex to compute the Floer cohomology, and compare its differential with the two-pointed Welschinger invariant. We refer to \cite{BC1,BC2,BC3} for more details about the pearl complex.

Let $L$ be a monotone Lagrangian submanifold of a symplectic manifold $X$, which is closed or convex at infinity. Fix a triple $(f, \rho, J)$ where $f: L\rightarrow \mathbb{R}$ is a Morse function, $\rho$ is a Riemannian metric on $L$ and $J$ is a compatible almost complex structure. Define a complex generated by the critical points of $f$:
$$
\mathcal{C}(f, \rho, J)= F\langle \text{Crit}(f)\rangle\otimes F[T, T^{-1}].
$$
Here $F$ is a commutative ring and $F[T, T^{-1}]$ is the ring of formal Laurent polynomials. When $L$ is spin, the ring $F$ can be taken as $\mathbb{Z}$ otherwise we need to take $F=\mathbb{Z}_{2}$. In \cite{BC1, BC2} the ground ring is assumed to be $\mathbb{Z}_{2}$ and in \cite{BC3} it is extended to the case of $\mathbb{Z}$. See Appendix A in \cite{BC3} for the orientation data for all related moduli spaces.

Let $\Phi_{t}$ be the flow of the negative gradient line of $(f, \rho)$, for $0\leq t\leq\infty$ (or $-\infty\leq t\leq\infty$ if $X$ is closed). Given two points $x, y\in L$ and a non-zero class $\beta\in\pi_{2}(X, L)$, consider the space of all sequences $(u_{1},\cdots, u_{l})$ of every possible length $l\geq 1$, where:
\begin{enumerate}
\item $u_{l}$ is a non-constant $J$-holomorphic disk with boundary on $L$;
\item there exists $-\infty\leq t'<0$ such that $\Phi_{t'}(u_{1}(-1))=x$;
\item for every $1\leq i\leq l-1$ there exists $0<t_{i}<\infty$ such that $\Phi_{t_{i}}(u_{i}(1))=u_{i+1}(-1)$;
\item there exists $0< t''\leq \infty$ such that $\Phi_{t''}(u_{l}(1))=y$;
\item $\sum_{1\leq i\leq l} [u_{i}]= \beta$.
\end{enumerate}
We view two sequences $(u_{1},\cdots, u_{l})$ and $(u'_{1},\cdots, u'_{l'})$ as equivalent if $l=l'$ and for every $1\leq i\leq l$ there exists $\delta_{i}\in Aut(\Delta)$ with $\delta_{i}(-1)=-1, \delta_{i}(1)=1$ and such that $u'_{i}=u_{i}\circ \delta_{i}$. The space of equivalence classes is denoted by $\mathcal{P}_{prl}(x, y; \beta; f, \rho, J)$. Elements of this space will be called pearly trajectories connecting $x$ to $y$.

We will be interested in the case when both $x$ and $y$ are critical points of $f$. (In particular, $t'=-\infty$ and $t''=+\infty$.)  The central theorem (for example, Theorem 2.1 in \cite{BC2}) states that for a generic triple $(f,\rho,J)$, the space $\mathcal{P}_{prl}(x, y; \beta; f, \rho, J)$ behaves like a manifold of dimension $\abs{x}-\abs{y}+\mu(\beta)-1$. Then we define a differential
$$
d:\mathcal{C}(f, \rho, J)\rightarrow \mathcal{C}(f, \rho, J), \quad d(x)=\sum_{y, \beta} \sharp \mathcal{P}_{prl}(x, y; \beta; f, \rho, J)y\cdot T^{\mu(\beta)}
$$
where the sum is over all $y, \beta$ with $\abs{x}-\abs{y}+\mu(\beta)-1=0$. The rest of the central theorem says that $d$ gives us a homology theory and the homology is independent of the triple $(f,\rho,J)$. Following the notation in \cite{BC1,BC2,BC3}, we call this homology the quantum homology $QH(L)$ of $L$. One property of $QH(L)$ is that it is isomorphic to the self-Floer homology $HF(L, L)$, hence if $L$ is displaceable then $QH(L)$ is zero.

Now we focus on our case where $L$ is a monotone Lagrangian 3-sphere $S$. We choose the height function $f:S\rightarrow \mathbb{R}$ on the unit sphere $S^{3}\subset \mathbb{R}^{4}$ and a Riemannian metric $\rho$ such that $(f,\rho)$ is Morse-Smale. Then $f$ has exactly one critical point $p$ (minimum) of index zero and one critical point $q$ (maximum) of index three. And the differentials between $p$ and $q$ determine $QH(S)$. Note that by degree reasons, the only possible non-trivial maps are
$$
d_{\beta}: p\mapsto \sharp \mathcal{P}_{prl}(p, q; \beta; f, \rho, J)q\cdot T^{4}
$$
where $\beta$ is a disk class with Maslov index four. And the differential $d=\sum_{\mu(\beta) =4}d_{\beta}$.

The key point of this subsection is the observation that $\sharp \mathcal{P}_{prl}(p, q; \beta; f, \rho, J)$ equals $n^{W}_{2,\beta}$, up to sign. The proof is based on the following lemma.

\begin{lemma}
Let $f:S\rightarrow \mathbb{R}$ be a perfect Morse function on a 3-sphere. Let $K_{1}, K_{2}$ be two oriented disjoint knots in $S$. Then the linking number $lk(K_{1}, K_{2})$ equals the signed count of negative gradient flow lines of $f$ starting from one point on $K_{1}$, ending at one point on $K_{2}$, up to sign.
\end{lemma}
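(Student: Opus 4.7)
The plan is to exhibit a geometric Seifert surface for $K_1$ built from the gradient flow of $f$, and then to identify the Morse-theoretic count of flow lines with the intersection number of this surface with $K_2$. Since $f$ is a perfect Morse function on $S^3$, it has exactly two critical points: a minimum $q$ and a maximum $p$. Every non-critical point lies on a unique gradient trajectory connecting $p$ to $q$. Let $\phi_t$ denote the negative gradient flow. Consider the 2-chain
$$
C(K_1) = \bigl\{ \phi_t(x) : x \in K_1,\ t \in [0, +\infty) \bigr\} \cup \{q\},
$$
which is the downward cone on $K_1$ with apex $q$. Away from $q$ this is a smoothly immersed surface whose boundary (as a singular chain) is $K_1$, so $C(K_1)$ is a singular Seifert surface for $K_1$.

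Next, I would invoke the standard fact that the linking number $lk(K_1,K_2)$ in $S^3$ may be computed as the signed intersection number of any 2-chain bounding $K_1$ with $K_2$; this is well-defined because $H_2(S^3; \mathbb{Z}) = 0$, so two such chains differ by a boundary whose intersection with $K_2$ is algebraically zero. Applying this with $C(K_1)$ and perturbing $K_2$ (or the Morse data) slightly to make the intersection transverse, one has
$$
lk(K_1, K_2) = C(K_1) \cdot K_2.
$$
A point of $C(K_1) \cap K_2$ is by construction a pair $(x,t)$ with $x \in K_1$, $t > 0$, and $\phi_t(x) \in K_2$, i.e.\ precisely a segment of a gradient flow line starting on $K_1$ and ending on $K_2$. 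Since $q \notin K_2$ (as $K_2$ is a knot and we may perturb so $q \notin K_2$), the singularity of $C(K_1)$ contributes nothing.

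The remaining point, and the only genuinely delicate one, is to check that the sign convention used for the Morse flow line count matches the intersection sign of $C(K_1)$ with $K_2$. At an intersection point $y = \phi_t(x)$, the tangent plane to $C(K_1)$ is spanned by the pushforward of the orientation of $K_1$ along $\phi_t$ together with $-\nabla f(y)$; comparing this with the orientation of $T_y S^3$ relative to the tangent vector of $K_2$ gives the intersection sign, and this is exactly the sign assigned to the Morse flow trajectory in the standard convention. The main obstacle is therefore purely one of sign bookkeeping together with ensuring transversality (small generic perturbation of $f$ or of $K_2$ suffices); the underlying topological identity is immediate from viewing the downward flow of $K_1$ as a Seifert surface.
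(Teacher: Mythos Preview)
Your proof is correct and follows essentially the same approach as the paper: build a Seifert surface for one of the knots as the Morse-flow cone to a critical point, then identify its intersection with the other knot simultaneously with the linking number and with the set of connecting flow segments. The only cosmetic difference is that the paper cones $K_{2}$ (and intersects with $K_{1}$) whereas you cone $K_{1}$ (and intersect with $K_{2}$); your version is in fact written a bit more carefully, especially regarding the direction of the flow and the sign discussion.
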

\begin{proof}
Let $p$ be the minimal point of $f$, consider the image of $K_{1}$ under the flow $\Phi$. That is, define
$$
C:= \bigcup_{x\in K_{1}} \lbrace y\in L\mid \exists t\in [0, \infty),\quad \Phi_{t}(x)=y\rbrace \cup \lbrace p \rbrace.
$$
We give an orientation on $C$ which is compatible with the orientation of $K_{1}$ as its boundary. Then there is a one-to-one correspondence between intersection points of $K_{2}$ and $C$ and flow lines starting from one point on $K_{1}$, ending at one point on $K_{2}$. (We perturb the metric $\rho$ a bit such that they intersect transversally.) Moreover, this intersection number between $K_{2}$ and $C$ equals the linking number $lk(K_{1}, K_{2})$. Hence we complete the proof.

Note that we assume that $f$ is perfect, for any $x\in K_{1}$ there is a unique smooth flow line connecting $x$ and $p$. For a general Morse function, there may be broken flow lines going back to other critical points. We suggest \cite{As} for general discussions.
\end{proof}

We remark that the signs of intersection points are given by the orientations of $K_{1}, K_{2}$ and $C$. The signs of flow lines depend on conventions in Morse theory. In \cite{As}, it is shown that there is a convention such that those two signs match. For our purpose, we need to compare the convention in \cite{As} and that in the Appendix of \cite{BC3}, to get a genuine equality between Welschinger invariants and Floer differentials. We will not do it here.

\begin{theorem}(Theorem \ref{wel})
In the above notations, given a disk class $\beta\in \pi_{2}(X, S)$ with Maslov index four, we have an equality
$$
\sharp \mathcal{P}_{prl}(p, q; \beta; f, \rho, J)= \pm n^{W}_{2,\beta}.
$$
\end{theorem}
\begin{proof}
Fix the height function $f$ on $S$ and generic choices of $(\rho, J)$ to define the pearl complex. Let $p$ ($q$ respectively) be the minimal (maximal respectively) point of $f$. Given a disk class $\beta$ with Maslov index four, let $\mathcal{M}_{\beta,2}(X,S;J)$ be the moduli space in (\ref{Wel-invariant}) and let $\mathcal{M}_{\beta,2}(X,S;(p,q);J)$ be the moduli space of elements such that two marked points go to $p$ and $q$ respectively. Then the two-pointed Welschinger invariant $n^{W}_{\beta}$ is the number of elements in $\mathcal{M}_{\beta,2}(X,S; (p,q);J)$.

We will construct a one-to-one correspondence between the moduli space of pearly trajectories connecting $p$ to $q$ and the moduli space $\mathcal{M}_{\beta,2}(X,S; (p,q);J)$. Pick an element $u$ in $\mathcal{M}_{\beta,2}(X,S; (p,q);J)$. First, if the underlying disk of $u$ is a single disk $u_{1}$. After reparametrization we assume that $u_{1}(-1)=p$ and $u_{1}(1)=q$. So this configuration is counted once in the space of pearly trajectories. On the other hand, a single disk has self-linking number one by definition hence it contributes once to $n^{W}_{2, \beta}$. Next, if the underlying disk of $u$ is a multi-disk $u_{1}, u_{2}$. (It has at most two components since $S$ is monotone.) Note that if two marked points are both on one component, then we have a Maslov index two disk with two-pointed constraints, which does not happen generically. So we assume that $u_{1}(-1)=p$ and $u_{2}(1)=q$. Then this configuration is weighted by the number of  flow lines from the boundary of $u_{1}$ to the boundary of $u_{2}$, which is the same as the linking number by the above lemma. Hence the multi-disks are counted by the same number in both moduli spaces, up to sign.
\end{proof}

Then we can define an invariant
$$
n^{W}_{2}:=\sum_{\mu(\beta)=4} n^{W}_{2, \beta} \in \mathbb{Z}
$$
to determine the Floer cohomology $HF(S; \Lambda(\mathbb{Z}_{2}))$. By above arguments, we have that
$$
n^{W}_{2}\equiv \sum_{\mu(\beta)=4} \sharp \mathcal{P}_{prl}(p, q; \beta; f, \rho, J)= d(p) \mod 2.
$$
So $HF(S; \Lambda(\mathbb{Z}_{2}))\neq \lbrace 0\rbrace$ if $n^{W}_{2}$ is an even number.

Therefore the Welschinger invariants give us another way to compute the Floer homology of a monotone Lagrangian 3-sphere. We sketch two  examples for this application. Consider the following Lagrangian embedding
$$
S^{3}\rightarrow \mathbb{C}^{2}\times \mathbb{C}P^{1}, \quad x \mapsto (i(x), -h(x))
$$
where $i$ is the inclusion of the unit sphere and $h$ is the Hopf map. The symplectic form on $\mathbb{C}^{2}\times \mathbb{C}P^{1}$ is the standard one times the Fubini-Study form. By the $\mathbb{C}^{2}$ factor, any compact subset of $\mathbb{C}^{2}\times \mathbb{C}P^{1}$ is displaceable. Hence we get a displaceable Lagrangian 3-sphere which is monotone with minimal Maslov number four. Note that $\pi_{2}(\mathbb{C}^{2}\times \mathbb{C}P^{1}, S)\cong \pi_{2}(\mathbb{C}P^{1})$ is one-dimensional, there is only one class $\beta$ with Maslov index four. The two-pointed Welschinger invariant $n^{W}_{2, \beta}$ is just the usual two-pointed open Gromov-Witten invariant since $\beta$ is minimal. We expect that $n^{W}_{2, \beta}=1$, which comes from the factor of $\mathbb{C}P^{1}$. On the other hand, we could compactify $\mathbb{C}^{2}\times \mathbb{C}P^{1}$ to be $\mathbb{C}P^{2}\times \mathbb{C}P^{1}$ by symplectic cutting on the boundary of a round ball $B^{4}(R)\subset \mathbb{C}^{2}$. Choosing $R$ properly, we get a monotone Lagrangian 3-sphere in $\mathbb{C}P^{2}\times \mathbb{C}P^{1}$. Then $\pi_{2}(\mathbb{C}P^{2}\times \mathbb{C}P^{1}, S)$ is two-dimensional with two generators $\beta_{1}, \beta_{2}$ from $\mathbb{C}P^{1}$ and $\mathbb{C}P^{2}$. We expect both $n^{W}_{2, \beta_{1}}$ and $n^{W}_{2, \beta_{2}}$ to be $1$. Hence $n^{W}_{2}\equiv 0 \mod 2$, which shows that the Lagrangian 3-sphere is nondisplaceable after compactifying the ambient space. Note that in this case the Lagrangian 3-sphere is known to have non-trivial Floer homology by using Lagrangian correspondence, see Section 6 in \cite{WW}.

\begin{remark} Here are two remarks about possible generalisations of the relation between Welschinger invariants and Floer homology.
\begin{enumerate}
\item As we have noticed, when the Lagrangian is spin, both Welschinger invariants and Floer homology can be defined over $\mathbb{Z}$ hence over any $\mathbb{Z}_{p}$. We expect a careful comparison of orientation data between \cite{W} and the Appendix in \cite{BC3} would give a genuine equality between invariants, not only modulo $2$.
\item For a higher dimensional monotone Lagrangian sphere, one may use the structural maps in the Biran-Oh spectral sequence which computes $QH(L)$, to define two-pointed open Gromov-Witten invariants. Those structural maps might correspond to the open Gromov-Witten invariants defined by Solomon-Tukachinsky \cite{ST} and Chen \cite{Chen}.
\end{enumerate}
\end{remark}

\section{A deformed Floer complex}
In this section we construct a Floer complex by counting holomorphic disks and cylinders, and show that these moduli spaces give an $A_{\infty}$-algebra modulo some energy. A second Floer complex, counting holomorphic strips and strips with one interior hole, will be constructed in Section 5. Although our later applications are about local tori, the construction of a deformed $A_{\infty}$-algebra does not use that the Lagrangian being a torus. Hence we state the results in a general setting.

\subsection{Monotone Lagrangian tori in $T^{*}S^{3}$}
We first review the construction of a family of monotone Lagrangian 3-tori $\lbrace L_{\lambda}\rbrace_{\lambda\in (0, +\infty)}$ in $T^{*}S^{3}$, which serve as our main examples. Let $T^{*}S^{3}$ be the cotangent bundle of $S^{3}$ with the standard symplectic structure. And let $Y_{0}=\lbrace xy-zw=0 \rbrace \subset \mathbb{C}^{4}$ be a singular hypersurface. One key fact is that there is a symplectomorphism
$$
T^{*}S^{3}-\lbrace\text{zero section}\rbrace \rightarrow Y_{0}-\lbrace(0,0,0,0)\rbrace.
$$
Hence $T^{*}S^{3}$ admits a Hamiltonian $T^{3}$-action outside the zero section:
$$
(e^{i\theta_{1}}, e^{i\theta_{2}}, e^{i\theta_{3}})\cdot (x,y,z,w)= (e^{i\theta_{1}}x, e^{-i\theta_{2}}y, e^{i\theta_{1}-i\theta_{3}}z, e^{-i\theta_{2}+i\theta_{3}}w).
$$
This Hamiltonian $T^{3}$-action gives us a singular torus fibration
$$
\pi: T^{*}S^{3}\rightarrow P\subset \mathbb{R}^{3}.
$$
Here the base $P$ is a convex polytope in $\mathbb{R}^{3}$, cut out by 4 affine functions
$$
x\geq 0; \quad -y\geq 0;\quad x-z\geq 0;\quad z-y\geq 0
$$
where $(x,y,z)$ are coordinates in $\mathbb{R}^{3}$. This polytope $P$ has four faces $P_{i}$ corresponding to the above four affine functions. A regular fiber over an interior point is a smooth Lagrangian torus and the fiber over the vertex at $(0,0,0)$ is a Lagrangian 3-sphere, the zero section. We refer to \cite{CKO} and \cite{P} for the details of the construction in view of a Gelfand-Tsetlin system. Moreover the fiber $L_{\lambda}:=\pi^{-1}(\lambda, -\lambda, 0)$ is a monotone Lagrangian torus with minimal Maslov number two, for any $\lambda\in (0, +\infty)$. This is the one-parameter family of monotone Lagrangian tori in $T^{*}S^{3}$ which are the main objects of this note. Similar to the toric case in \cite{CO} and \cite{FOOO1}, the open Gromov-Witten theory of regular fibers of certain Gelfand-Tsetlin systems was studied in \cite{NNU}, which we state below in our setting.

\begin{theorem}(Section 9, \cite{NNU})
Let $L$ be a monotone fiber of the Gelfand-Tsetlin system on $T^{*}S^{3}$, then there exists a compatible almost complex structure $J_{0}$ such that
\begin{enumerate}
\item There is a one-to-one correspondence between the $J_{0}$-holomorphic disks with Maslov index two bounded by $L$ and the faces of the Gelfand-Tsetlin polytope $P$;
\item Every Maslov index two class $\beta \in H_{2}(X, L)$ is Fredholm regular and the one-pointed open Gromov-Witten invariant $n_{\beta}=1$;
\item There is a neighborhood $W$ of the zero section $S$ such that the images of all $J_{0}$-holomorphic disks with Maslov index two bounded by $L$ are outside $W$, see Lemma 9.9 in \cite{NNU}.
\end{enumerate}
\end{theorem}

Therefore in our case each fiber bounds four holomorphic disks with Maslov index two, which span the relative homology $H_{2}(X, L)$. We remark that since $L_{\lambda}$ is monotone the one-pointed open Gromov-Witten invariant of a given class is independent of the choice of $J$. So $n_{\beta}=1$ is not only true for $J_{0}$ but also for other regular compatible almost complex structures on $T^{*}S^{3}$. Strictly speaking, Section 9 in \cite{NNU} consider the case of a closed symplectic manifold which locally looks like $T^{*}S^{3}$, and the proof therein is a local argument. Hence the above conclusions are also true for our monotone fibers, when the ambient space is $T^{*}S^{3}$.

Another description of this one-parameter family of monotone Lagrangian tori comes from a Lefschetz fibration, see \cite{CPU} where they also computed all the one-pointed Gromov-Witten invariants. We consider the smoothing
$$
Y=\lbrace xy-zw=\epsilon \rbrace \subset \mathbb{C}^{4}
$$
which is symplectomorphic to $T^{*}S^{3}$. It can be embedded into
$$
\hat{Y}=\lbrace xy=u-a, zw=u-b \rbrace \subset \mathbb{C}^{5}
$$
where $a, b$ are positive real numbers and $\epsilon=b-a>0$. The projection $\hat{Y}\rightarrow \mathbb{C}$ to the $u$-variable gives us a double conic fibration with singular fibers over $u=a$ and $u=b$. There is a fiberwise 2-torus action
$$
(\theta_{1}, \theta_{2})\cdot (x, y, z, w)=(e^{i\theta_{1}}x, e^{-i\theta_{1}}y, e^{i\theta_{2}}z, e^{-i\theta_{2}}w) \quad \forall (\theta_{1}, \theta_{2})\in T^{2}.
$$
We call an above torus orbit an equator in the fiber. Then pick a circle in the base $C_{r}=\lbrace \abs{z} =r, r>b>a \rbrace \subset \mathbb{C}$. The 3-tori formed by crossing an equator with a base circle are of our interest. In particular these tori are monotone with minimal Maslov number two. Note that if we pick a segment connecting $a$ and $b$ and cross the segment with equators which degenerate at endpoints then we get a Lagrangian 3-sphere, Hamiltonian isotopic to the zero section. To compare this one-parameter family of Lagrangian tori with the Oakley-Usher construction \cite{OU} we mentioned in Section 3, this family $L_{\lambda}$ corresponds to $L^{\lambda}_{1, 1}$.

From above approaches we get all the information to count Maslov two disks with boundary on $L_{\lambda}$ so that we can write down the disk potential function explicitly. In specific coordinates it is
\begin{equation}
\mathfrak{PO}(b)=x+y^{-1}+xz^{-1}+y^{-1}z, \quad b\in H^{1}(L_{\lambda}; \Lambda_{0}).
\end{equation}
We omit the energy parameter here since $L_{\lambda}$ is monotone. It is easy to check that this potential function has a one-dimensional critical loci, which indicates that with respect to some weak bounding cochain the Floer cohomology of $L_{\lambda}$ is nonzero hence $L_{\lambda}$ is nondisplaceable in $T^{*}S^{3}$.

If we consider a Lagrangian 3-sphere $S$ in a symplectic 6-manifold $X$ then $L_{\lambda}$ sits inside a neighborhood of $S$ for small $\lambda$. Due to the global symplectic geometry of $X$ our local torus $L_{\lambda}$ may bound more higher energy holomorphic disks with Maslov index two. Therefore the potential function may have more higher energy terms and may fail to have global critical points. Correspondingly, our torus $L_{\lambda}$ may be displaceable in $X$. Indeed if the Lagrangian 3-sphere $S$ is displaceable in $X$, then $L_{\lambda}$ is displaceable for small $\lambda$.

\subsection{Conifold transition}
Before constructing the moduli spaces of holomorphic cylinders we first describe some topological aspects of the conifold transition, mostly following \cite{STY}. By a 3-fold ordinary double point, or a \textit{node}, we mean a complex singularity analytically equivalent to
$$
\lbrace xy-zw=0 \rbrace \subset \mathbb{C}^{4}.
$$
There are two ways to desingularize the node. One is by considering its deformation, or the \textit{smoothing}
$$
\lbrace xy-zw=\epsilon \rbrace \subset \mathbb{C}^{4}
$$
which is a complex symplectic smooth hypersurface equipped with the induced symplectic structure on $\mathbb{C}^{4}$. It is symplectomorphic to the total space of the cotangent bundle of a 3-sphere, no matter $\epsilon$ is, while its complex structure depends on $\epsilon$. The other desingularisation is a \textit{small resolution}. We first blow up the singular point, getting a smooth complex manifold with an exceptional divisor $\mathbb{C}P^{1}\times \mathbb{C}P^{1}$, then blow down either family of $\mathbb{C}P^{1}$. We have two choices of $\mathbb{C}P^{1}$ to blow down and the resulting manifolds are related by a flop. The complex structure on either one is canonical while the symplectic structure depends on the size of $\mathbb{C}P^{1}$. As a complex manifold, the small resolution is the total space of the holomorphic vector bundle $\mathcal{O}(-1)\oplus \mathcal{O}(-1)\rightarrow \mathbb{C}P^{1}$. We say a \textit{conifold transition} by passing from one desingularisation to the other.

Beyond this local picture, the conifold transition was generalized in \cite{STY} as a surgery of symplectic 6-manifolds, replacing a Lagrangian 3-sphere by a holomorphic $\mathbb{C}P^{1}$ with a correct normal bundle. In order to patch the local parameters together, some topological conditions on the symplectic manifold are needed.

\begin{theorem}(Theorem 2.9, \cite{STY})
Fix a symplectic 6-manifold $X$ with a collection of $n$ disjoint embedded Lagrangian 3-spheres $S_{i}$. There is a ``good'' relation
$$
\sum_{i} a_{i}[S_{i}]=0 \in H_{3}(X; \mathbb{Z}), \quad a_{i}\neq 0\quad \forall i
$$
if and only if there is a symplectic structure on one of the $2^{n}$ choices of conifold transitions of $X$ in the Lagrangian $S_{i}$, such that the resulting $\mathbb{C}P^{1}$s are symplectic.
\end{theorem}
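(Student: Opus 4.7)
The plan is to prove both directions via a Mayer--Vietoris comparison of $X$ with a candidate transition $\tilde X$, exploiting that both contain the common open subset $V := X\setminus \bigsqcup_i S_i = \tilde X\setminus \bigsqcup_i \mathbb{C}P^1_i$. First I would set up the local model: each Weinstein neighborhood $U_i$ of $S_i$ is symplectomorphic to a smoothed conifold $\lbrace xy-zw=\epsilon_i\rbrace$, and either of its two small resolutions $\tilde U_i^{\pm}$ replaces $S^3$ by an exceptional curve $\mathbb{C}P^1_i$ sitting inside $\mathcal{O}(-1)\oplus\mathcal{O}(-1)$, admitting a K\"ahler form that assigns $\mathbb{C}P^1_i$ any prescribed positive area. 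The $2^n$ candidate transitions correspond to independent sign choices at each node, and the class $[\mathbb{C}P^1_i]\in H_2(\tilde U_i^{\pm})$ pairs with the $S^2$-fiber class $[F_i]\in H_2(\partial U_i)\cong H_2(S^3\times S^2)$ with a sign determined by the choice.

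For the forward direction, given the good relation, choose a 4-chain $C\subset X$ with $\partial C=\sum_i a_i S_i$. Its Poincar\'e--Lefschetz dual defines a class $\alpha\in H^2(V;\mathbb{R})$ whose restriction to each $\partial U_i$ pairs with $[F_i]$ to give $a_i$. I would then select the small resolution $\tilde U_i^{\pm}$ at each node so that $[\mathbb{C}P^1_i]$ is homologous to $\mathrm{sign}(a_i)\cdot [F_i]$ in the resolved neighborhood; a Mayer--Vietoris argument for $\tilde X=V\cup \bigsqcup \tilde U_i^{\pm}$ then shows that $\alpha$ extends to a class $\tilde\alpha\in H^2(\tilde X;\mathbb{R})$ with $\langle\tilde\alpha,[\mathbb{C}P^1_i]\rangle=|a_i|>0$. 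Represent $\tilde\alpha$ by a closed 2-form $\eta$. Separately, modify $\omega$ inside each $U_i$ using the Weinstein model (where $\omega$ is exact near $S_i$) to produce a closed, positive semidefinite 2-form $\omega'$ on $X$ that extends, via the local K\"ahler model on the resolved side, to a closed 2-form on all of $\tilde X$. Then for $N$ sufficiently large, $\tilde\omega := N\omega'+\eta$ is symplectic on $\tilde X$: non-degeneracy away from the exceptional curves comes from $N\omega'$ dominating, while non-degeneracy on each $\mathbb{C}P^1_i$ is supplied by the strict positivity of $\eta$ there.

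For the reverse direction, suppose $\tilde X$ admits a symplectic form $\tilde\omega$ with each $\mathbb{C}P^1_i$ symplectic of area $a_i>0$ (after absorbing signs into the choice of transition). Restricting $[\tilde\omega]$ to $V$ and pairing via Poincar\'e--Lefschetz duality in $V$ produces a relative 4-chain whose boundary on each $\partial U_i$ is the multiple $a_i$ of the $S^3$-section --- the $S^2$-fiber component of the boundary is Poincar\'e-dual to $\langle[\tilde\omega],[\mathbb{C}P^1_i]\rangle$, which is encoded on $V$ through the fiber class. Extending this 4-chain across the Weinstein neighborhoods of the $S_i$ inside $X$ (rather than inside $\tilde X$) produces an honest 4-chain in $X$ with $\partial C=\sum_i a_i S_i$, which yields the relation over $\mathbb{R}$; the integral version follows by clearing denominators in the lattice $H_3(X;\mathbb{Z})$.

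The main obstacle is the forward direction, particularly the simultaneous need to (i) identify the cohomology class $\alpha$ on $V$ whose extension to $\tilde X$ has the correct area on each $\mathbb{C}P^1_i$, and (ii) produce a positive semidefinite model form $\omega'$ that absorbs the topological discontinuity of the surgery. The Mayer--Vietoris computation identifying the extension obstruction with exactly the relation class $\sum_i a_i [S_i]$ is the crux; once this vanishes, the construction of $\tilde\omega$ follows a standard two-form interpolation scheme in which the positive topological form dominates away from the exceptional locus while $\eta$ supplies symplectic area on the exceptional curves.
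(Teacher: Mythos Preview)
The paper does not prove this theorem at all: it is stated as Theorem~4.2 with an explicit attribution to Theorem~2.9 of \cite{STY}, and no argument is given in the paper itself. So there is no ``paper's own proof'' to compare your proposal against.

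That said, your sketch is broadly faithful to the Smith--Thomas--Yau argument. The forward direction in \cite{STY} does proceed by dualizing the bounding $4$-chain to a degree-two class on the complement, using Mayer--Vietoris to check it extends to the chosen resolution, and then building the symplectic form by an inflation-type argument $N\omega'+\eta$ where $\omega'$ is a closed $2$-form that is symplectic away from the exceptional curves and $\eta$ supplies area there. Your reverse direction is the right idea but is stated a bit loosely: the cleaner way to phrase it is that the restriction map $H^2(\tilde X;\mathbb{R})\to H^2(V;\mathbb{R})$ followed by the Mayer--Vietoris connecting map into $H_3$ of the boundary pieces recovers the areas $\langle[\tilde\omega],[\mathbb{C}P^1_i]\rangle$ as the coefficients of the $[S_i]$ in a null-homologous combination inside $X$; the integral statement then follows since the $[S_i]$ are integral classes. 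Your identification of the ``main obstacle'' --- matching the Mayer--Vietoris extension obstruction with the vanishing of $\sum a_i[S_i]$ --- is exactly the crux in \cite{STY}.
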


One interesting question is that how symplectic invariants change under conifold transitions. The closed string case, like quantum cohomology, has been more studied by algebraic geometry and by symplectic sum constructions. The open string case like Floer theory is less touched, in particular for a global symplectic manifold, and we will explore some points in this note.

\subsection{An example about the quadric hypersurface}
Now we discuss a motivating example about the quadric hypersurface. Let
$$
Q_{3}=\lbrace [z_{0}, \cdots, z_{4}]\in \mathbb{C}P^{4}\mid z_{0}^{2}=\sum_{j=1}^{4}z_{j}^{2}\rbrace
$$
be the quadric hypersurface in $\mathbb{C}P^{4}$. It is a monotone symplectic manifold with the induced symplectic structure. And the real part $Q_{3, \mathbb{R}}=Q_{3}\cap \mathbb{R}P^{4}$ is a Lagrangian $3$-sphere. We can also obtain $Q_{3}$ by performing a symplectic cutting on the boundary of some disk bundle of $T^{*}S^{3}$. Then the zero section corresponds to the real part $Q_{3, \mathbb{R}}$ and the boundary of the disk bundle, after quotienting by the Hamiltonian $S^{1}$-action, becomes the divisor at infinity which is isomorphic to $\mathbb{C}P^{1}\times \mathbb{C}{P}^{1}$. In this point of view the quadric hypersurface is the ``simplest'' compactification of $T^{*}S^{3}$ by adding one divisor at infinity.

Note that the symplectic cutting behaves well with respect to the moment map
$$
\pi: T^{*}S^{3}\rightarrow P\subset \mathbb{R}^{3}
$$
we get a singular toric fibration
$$
\pi: Q_{3}\rightarrow P_{Q}\subset \mathbb{R}^{3}
$$
of $Q_{3}$. The new polytope $P_{Q}$ will be cut out by five affine functions
$$
x\geq 0; \quad -y\geq 0;\quad x-z\geq 0;\quad z-y\geq 0;\quad y-x+1\geq 0.
$$
So compared with the polytope of $T^{*}S^{3}$ there is one more face $y-x+1=0$, which corresponds to the divisor at infinity. Here we fix the constant $1$ just for simplicity. The symplectic manifold of the polytope $P_{Q}$ is only isomorphic to the actual hypersurface $Q_{3}$ up to a conformal parameter.

By using the toric degeneration method in \cite{NNU} the disk potential function of regular fibers can be explicitly computed. For example, over the point $(\frac{1}{3}, -\frac{1}{3}, 0)$ there is a monotone Lagrangian 3-torus $L$. Its disk potential function is
\begin{equation}
\mathfrak{PO}(b)=x+y^{-1}+xz^{-1}+y^{-1}z+x^{-1}y, \quad b\in H^{1}(L; \Lambda_{0}).
\end{equation}
Compared with the case in $T^{*}S^{3}$, there is one more term in the potential function due to the new divisor at infinity. Directly we can check that the new potential function has three critical points, which shows that $L$ carries three different local systems as three different objects in the monotone Fukaya category of $Q_{3}$.

Moreover, by the work of Smith \cite{Smith} the Lagrangian sphere $Q_{3, \mathbb{R}}$ split-generates the monotone Fukaya category with eigenvalue zero. (It also follows from Evans-Lekili \cite{EL} since $Q_{3, \mathbb{R}}$ is a Lagrangian $SU(2)$-orbit.) Note that the sum of Betti numbers of $Q_{3}$ is four. Therefore the sphere and the monotone torus with three bounding cochains split-generate the whole monotone Fukaya category.

Since the Lagrangian sphere $Q_{3, \mathbb{R}}$ is homologically trivial we can perform conifold transition on it. The resulting manifold $\widetilde{Q_{3}}$ happens to be toric and one can check that the critical loci of the potential function are six toric fibers with bounding cochains, which match the sum of Betti numbers of $\widetilde{Q_{3}}$. \textit{Therefore three torus branes are merged and transformed into a sphere brane under the (reversed) conifold transition!} This is a 6-dimensional analogue of 4-dimensional phenomenon in \cite{FOOO3}, where the ``baby conifold transition'' of the second quadric hypersurface $Q_{2}=\mathbb{C}P^{1}\times \mathbb{C}P^{1}$ was studied.

Hence motivated by \cite{FOOO3} all the Lagrangian tori over the line in the polytope connecting the sphere brane and the monotone torus brane are expected to be nondisplaceable. The proof of the 4-dimensional case in \cite{FOOO3} considers the bulk-deformed potential functions of these tori, which have critical points for particular bulk deformations. However the same technique fails in our 6-dimensional situation. One reason is that the topology of $Q_{3}$ is ``too simple'' for us. To compute the bulk-deformed potential function explicitly one often uses divisors as bulk deformations. The only 4-cycle of $Q_{3}$ is the divisor at infinity. After direct computations we find it does not help us to produce critical points of potential functions of our Lagrangians. This motivates us to use other faces of the polytope as bulk deformations. However, the preimages of other four faces attaching the Lagrangian sphere are four 4-chains, not 4-cycles since they bound the 3-sphere. And we cannot naively use chains as bulk deformation since the squares of some boundary operators are not zero.

If we want to use those 4-chains to perturb the Floer cohomology of our toric fiber, the key problem is to cancel the ``boundary effect'' of these chains. To achieve this goal we introduce the moduli space of holomorphic cylinders.

Another direction which avoids using these 4-chains is to look at other nodal toric Fano 3-folds. In particular when the second Betti number is large. Then there are more 4-cycles to do bulk deformation and one is more likely to prove the local tori are nondisplaceable since there are more parameters. As we mentioned in the introduction, there is a full classification \cite{Ga} of 100 nodal toric Fano threefolds where one can do computations explicitly.

\subsection{Weakly unobstructedness of local tori}
In the last subsection we compactify $D_{r}T^{*}S^{3}$ to be the quadric hypersurface, which has a toric degeneration to a Fano variety admitting a small resolution, such that our local tori become toric fibers. A direct consequence is that the local tori are weakly unobstructed, see Theorem 10.1 \cite{NNU} and Theorem 1.2 \cite{NNU2}. However, for a general symplectic 6-manifold $X$ containing a Lagrangian sphere $S$, to show that local tori near $S$ are weakly unobstructed is not easy. For example, in the general toric case without assuming the Fano condition, the weakly unobstructedness \cite{FOOO1} is proved by using the $T^{n}$-action on moduli space of disks. In this article we assume Condition \ref{condition} to make our local Lagrangian tori to be weakly unobstructed under an energy bound. Now we give some speculations about how to relax this condition. We can first allow $J$-holomorphic spheres with zero first Chern numbers, as indicated in Remark 3.6 \cite{Au}. Next we may use the notion of ``broken Floer theory'' by Charest-Woodward \cite{CW}.

In \cite{CW} the weakly unobstructedness is shown for (local) toric fibers near a blow-up locus, a reverse flip and for the Clifford torus in a Darboux chart. For example, they have the following result.

\begin{theorem}(Theorem 7.21 \cite{CW})
Let $(X^{2n}, E, \omega)$ be a rational symplectic manifold with an exceptional divisor $E$ of small volume. That is, $E\simeq\mathbb{C}P^{n-1}$ with normal bundle isomorphic to $\mathcal{O}(-1)$. Let $L$ be a local toric fiber near $E$. Then there exists suitable perturbation data such that the Fukaya algebra of $L$ is weakly unobstructed. Moreover, we have that
$$
H^{1}(L; \Lambda_{0})\subset \mathcal{M}_{weak}(L)
$$
hence for any $b\in H^{1}(L; \Lambda_{0})$ the Floer cohomology $HF(L, b)$ is well-defined.
\end{theorem}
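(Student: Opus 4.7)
The plan is to invoke the machinery of Charest--Woodward, which replaces the almost complex structure perturbations by a combination of a Donaldson-type stabilizing divisor and coherent domain-dependent perturbations of treed holomorphic disks, and then to verify that in the present geometric setup the obstruction term $\mathfrak{m}_0(1)$ lands in the span of the unit $PD([L])$.

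First I would fix a Donaldson stabilizing divisor $D \subset X$ disjoint from $L$ whose complement deformation retracts onto a skeleton avoiding $L$, so that every non-constant holomorphic disk with boundary on $L$ intersects $D$ in a controlled number of points prescribed by the symplectic area. This allows one to replace the usual compactified moduli spaces of disks by the moduli spaces of stable treed holomorphic disks with interior marked points mapping to $D$, on which coherent perturbation data can be chosen so that all such strata are transversally cut out. The outcome is a filtered $A_\infty$-algebra on the Morse complex $CM^*(L;\Lambda_0)$ whose $\mathfrak{m}_k$ count rigid treed configurations.

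Second, I would analyze which disk classes $\beta \in \pi_2(X,L)$ can contribute to $\mathfrak{m}_0$ of degree different from that of the unit. By degree reasons, any contribution not proportional to $PD([L])$ requires $\mu(\beta) \le 0$. For the local toric fiber $L$ near an exceptional divisor $E \simeq \mathbb{CP}^{n-1}$ with normal bundle $\mathcal{O}(-1)$, the disks with $\mu(\beta) \le 0$ can only arise from bubbling configurations where a Maslov two toric disk breaks off together with a sphere component mapping into $E$ (or another class of non-positive Chern number); the key geometric input is that $E$ is the unique source of such problematic classes, and its small volume gives explicit area control. Using the perturbation data and the divisor insertion one then shows via the standard dimension count, as in Sections~7--8 of \cite{CW}, that the boundary strata of the relevant $1$-dimensional moduli spaces either cancel in pairs or contribute multiples of the strict unit.

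Third, one upgrades the conclusion $\mathfrak{m}_0(1) \in \Lambda_+ \cdot PD([L])$ to the inclusion $H^1(L;\Lambda_0) \subset \mathcal{M}_{weak}(L)$ by the same argument as in Lemma~3.3: if every disk contribution either has the top degree or is proportional to the unit, then for any $b \in H^1(L;\Lambda_0)$ the sum $\sum_k \mathfrak{m}_k(b,\ldots,b)$ is automatically a multiple of $PD([L])$, using the divisor axiom (2.4) to handle the insertion of $b$. The passage from $\Lambda_+$-coefficients to $\Lambda_0$-coefficients is formal via the exponential convention (2.2).

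The main obstacle I expect is the second step: controlling the Maslov-$\le 0$ bubble configurations that involve the exceptional divisor. In the genuinely Fano case these are ruled out by positivity, but here one must actually exploit the rationality of $\omega$, the smallness of the volume of $E$, and the careful choice of perturbation data to ensure that no wall-crossing phenomenon introduces unwanted contributions to $\mathfrak{m}_0$ outside the unit direction; this is precisely the content that is worked out in detail in \cite{CW}, and for the present paper it suffices to cite their theorem after checking that the hypotheses hold for the Weinstein model $D_r T^*S^3$ and its compactifications.
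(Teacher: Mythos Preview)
The paper does not prove this statement at all: it is quoted verbatim as a result of Charest--Woodward (Chapters~7--8 of \cite{CW}) and used only as a model and motivation. Immediately after stating it the author writes that ``their method seems very likely to be applied to our case'' but does not carry this out, and instead retains Condition~1.1 as a standing hypothesis. So there is no proof in the paper for you to compare against.

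Your sketch is a reasonable outline of the Charest--Woodward machinery itself (stabilizing divisor, treed disks, coherent perturbations, degree argument forcing $\mathfrak{m}_0$ into the unit direction), and in that sense it is the ``right'' approach. Two small points: first, your reference to Lemma~3.3 is apt for the final step, but note that the genuine work in \cite{CW} is precisely your second step, and a real proof cannot simply gesture at it. Second, your closing sentence conflates two different things: the theorem as stated is about a local toric fiber near an exceptional $\mathbb{CP}^{n-1}$ with normal bundle $\mathcal{O}(-1)$, not about the $D_rT^*S^3$ model. The paper explicitly distinguishes these, citing the former as established and the latter (local tori near a Lagrangian $S^3$) as only expected; so ``checking the hypotheses hold'' for $D_rT^*S^3$ is not what is needed here and is in fact left open by the author.
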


Their method seems very likely to be applicable to our case for any rational symplectic manifold, since our local tori live in a Fano almost toric piece $Q_{3}$ after degeneration. That is, when $X$ is rational we hope to prove that the local tori are always weakly unobstructed without assuming Condition \ref{condition}. This will be pursued in future work, and currently in this article we still assume that our local torus satisfies Condition \ref{condition} for some $J$.

\subsection{Holomorphic disks and cylinders}
Let $X$ be a symplectic 6-manifold and $S$ be an integrally homologically trivial Lagrangian 3-sphere in $X$. Now we will prove Theorem \ref{construction}, which works in a more general case without assuming $L$ to be a local torus. In next subsection we will apply it to the case of local tori. Fix a 4-chain $K$ in $X$ such that $\partial K=S$ and $\mathfrak{b}=w\cdot K$ with $w\in \Lambda_{+}$. Let $L$ be an oriented Lagrangian submanifold of $X$ such that $L\cap K=\emptyset$ and $L$ satisfies Condition \ref{condition}.

Set $E:=\min\lbrace E_{S}+ v(w), 2v(w), E_{+}\rbrace$, we will construct an $A_{\infty}$-structure $\lbrace \mathfrak{m}^{cy, \mathfrak{b}}_{k}\rbrace$ on the singular cohomology of $L$, with coefficients in $\Lambda_{0}\big/ T^{E}\cdot\Lambda_{0}$. For a disk class $\beta\in \pi_{2}(X,L)$, consider the moduli space of holomorphic disks $\mathcal{M}_{l,k}(\beta)$, the set of $J$-holomorphic maps
$$
u: (D, \partial D)\rightarrow (X,L)
$$
with $l$ interior marked points and $k$ boundary marked points modulo automorphism, of the class $\beta$. Since we only construct a theory modulo $T^{E}$, we assume that $\omega(\beta)<E$.

When $l=0$ and $k\geq 0$, let
$$
\mathcal{M}_{0, k+1}(\beta; (x_{1}, \cdots, x_{k}))
$$
be the compactified moduli space of holomorphic disks of class $\beta$ with $k+1$ boundary marked points, such that the last $k$ marked points are mapped to $k$ singular chains $(x_{1}, \cdots, x_{k})$ in $L$ respectively. Then by using the first boundary marked point, we get a chain
$$
ev: \mathcal{M}_{0, k+1}(\beta; (x_{1}, \cdots, x_{k}))\rightarrow L
$$
and we can define an operator
$$
\mathfrak{m}_{k;\beta}: C_{*}(L)^{\otimes k}\rightarrow C_{*}(L).
$$
Setting $\mathfrak{m}_{k}:=\sum_{\beta}\mathfrak{m}_{k;\beta}T^{\omega(\beta)}$ we get a collection of operators $\lbrace \mathfrak{m}_{k}\rbrace$ which satisfies the (curved) $A_{\infty}$-relation. This is the $A_{\infty}$-structure on $C_{*}(L)$ defined in \cite{FOOO}, without bulk deformations.

Now consider the case when $l=1$ and $k\geq 0$, let
$$
\mathcal{M}_{1, k+1}(\beta; (x_{1}, \cdots, x_{k}), K)
$$
be the compactified moduli space of holomorphic disks of class $\beta$ with $k+1$ boundary marked points and one interior marked point, such that the last $k$ marked points are mapped to $k$ singular chains $(x_{1}, \cdots, x_{k})$ in $L$, and the interior marked point is mapped to $K$. When $K$ is replaced by a cycle, those moduli spaces are used in \cite{FOOO} to define an $A_{\infty}$-structure on $C_{*}(L)$ with bulk deformation by that cycle. In the following we will explain how to use the chain $K$ as a bulk deformation, up to some energy.

Different from the case that $K$ is a cycle, there is one more codimension one boundary in $\mathcal{M}_{1, k+1}(\beta; (x_{1}, \cdots, x_{k}), K)$, which corresponds to that the interior marked point goes to the boundary of $K$. We write this codimension one boundary as $\mathcal{M}_{1, k+1}(\beta; (x_{1}, \cdots, x_{k}), S=\partial K)$. If we directly use $\mathcal{M}_{1, k+1}(\beta; (x_{1}, \cdots, x_{k}), K)$ to define operators, then these operators will not obey the $A_{\infty}$-relation, nor have the invariance property. So we introduce moduli space of holomorphic cylinders to compensate these extra codimension one boundaries.

Consider the homotopy classes of cylinders, with one end on $S$ and the other on $L$. Note that $S$ is simply-connected, any homotopy class of such cylinders corresponds to a disk class $\beta\in\pi_{2}(X,L)$. Hence we abuse the notation to use $\beta$ as a disk class or a cylinder class. Fix a class $\beta$ with $\omega(\beta)<\min\lbrace E_{S}, E_{+}\rbrace$, we will construct another moduli space $\mathcal{M}_{1, k+1}^{cy}(\beta; (x_{1}, \cdots, x_{k}), S)$. We remark that for those classes having energy greater than $\min\lbrace E_{S}, E_{+}\rbrace$, once they are weighted by the energy parameter $T^{v(w)}$ from $\mathfrak{b}$, the total energy is greater than $E$. So we do not need to study them by energy reasons.

We write the domain of a cylinder as
$$
A_{\epsilon, p}=\lbrace z\in \mathbb{C}\mid \abs{z} \leq 1, \abs{z-p}\geq \epsilon, \epsilon< 1-\abs{p}\rbrace
$$
where $0<\epsilon <1$ is a conformal parameter and $p$ is an interior point in the unit disk. Topologically the domain is an annulus with two disjoint boundaries $C_{\epsilon}$ and $C_{1}$. With respect to an almost complex structure $J$ in Condition \ref{condition}, we consider the $J$-holomorphic maps
$$
\lbrace u: A_{\epsilon, p}\rightarrow X\mid u(C_{1}) \in L, u(C_{\epsilon})\in S \rbrace.
$$
And the moduli space $\widetilde{\mathcal{M}}_{1, k+1}^{cy}(\beta; (x_{1}, \cdots, x_{k}), S)$ contains all such maps $u$ representing a homotopy class $\beta$ with $k+1$ marked points on the boundary $C_{1}$, modulo automorphisms. Here the last $k$ marked points are mapped to $k$ singular chains $(x_{1}, \cdots, x_{k})$ in $L$. Next we compactify this moduli space.

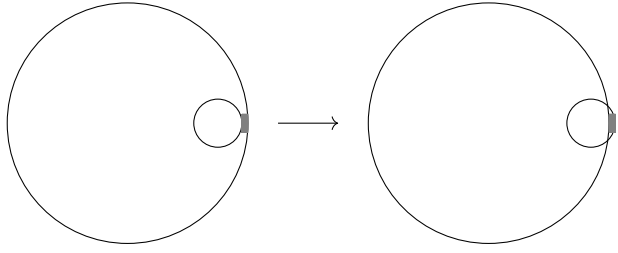
\begin{figure}
  \begin{tikzpicture}[xscale=0.8, yscale=0.8]
  \draw (0,0) circle [radius=2];
  \draw (1.5,0) circle [radius=0.4];
  \filldraw [gray] (1.9,0.15)--(2,0.15)--(2,-0.15)--(1.9,-0.15)--(1.9,0.15);
  \draw [->] (2.5,0)--(3.5,0);
  \draw (6,0) circle [radius=2];
  \draw (7.7,0) circle [radius=0.4];
  \filldraw [gray] (8,0.15)--(8.1,0.15)--(8.1,-0.15)--(8,-0.15)--(8,0.15);
  \end{tikzpicture}
  \caption{Degeneration when circle ends meet.}
\end{figure}

\begin{theorem}\label{compactification}
There is a compactification $\mathcal{M}_{1, k+1}^{cy}(\beta; (x_{1}, \cdots, x_{k}), S)$ of the moduli space $\widetilde{\mathcal{M}}_{1, k+1}^{cy}(\beta; (x_{1}, \cdots, x_{k}), S)$, such that it has a codimension one boundary component
$$
\partial^{cy} \mathcal{M}_{1, k+1}^{cy}(\beta; (x_{1}, \cdots, x_{k}), S)=-\mathcal{M}_{1, k+1}(\beta; (x_{1}, \cdots, x_{k}), S)
$$
with respect to suitably chosen orientations.
\end{theorem}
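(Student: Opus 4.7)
The plan is to apply Gromov compactness to $\widetilde{\mathcal{M}}^{cy}_{1,1}(\beta,S)$ and enumerate the codimension-one strata appearing in the compactification, showing that the only one that occurs is the collapse of the inner boundary. The candidates are: (i) the modular degeneration $\epsilon\to 0$, in which $C_\epsilon$ pinches to an interior point; (ii) the degeneration $\epsilon\to 1-|p|$, in which $C_\epsilon$ and $C_1$ become internally tangent; (iii) disk bubbling on $L$ along $C_1$; (iv) disk bubbling on $S$ along $C_\epsilon$; and (v) sphere bubbling in the interior. Multi-node configurations are at least codimension two, so these five exhaust the candidates.

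I would first identify stratum (i). As $\epsilon\to 0$ the image $u(C_\epsilon)$ lies on the connected Lagrangian $S$ with diameter tending to zero, so by removable singularity the limit extends across the puncture to a $J$-holomorphic disk $\bar u\colon (D,\partial D)\to(X,L)$ carrying the original boundary marked point on $\partial D$ and a new interior marked point (at the pinch) constrained to lie in $S$. A dimension count confirms that this stratum has codimension one and is identified with $\mathcal{M}_{1,1}(\beta,S)$. The sign $-1$ arises from a determinant-line comparison between the inward normal direction $-d\epsilon$ on $\mathcal{M}^{cy}_{1,1}$ and the natural orientation on $\mathcal{M}_{1,1}(\beta,S)$ coming from the constraint by $S$; this can be checked on a single explicit model and matches the standard convention for neck-stretching boundaries in Lagrangian Floer theory.

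Next I would rule out (ii)--(v). Stratum (ii) cannot occur because the tangency point would have to lie in both $L$ and $S$, yet $L=L_\lambda$ and $S$ are disjoint by construction. Stratum (v) has codimension at least two by Condition 1.1(3), since any bubbled sphere contributes $2c_1\ge 2$. For stratum (iv), Corollary 3.5 gives a lower bound $E_S$ on the energy of any $J$-holomorphic disk with boundary on $S$, and by shrinking the Weinstein neighborhood we may assume $E_S>\omega(\beta)$, so no such bubble fits within the energy budget $\omega(\beta)<E_5-E_1$.

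The main obstacle is stratum (iii). By the energy bound and the minimal Maslov two property of $L$, any disk bubble on $L$ must be one of the four local Maslov-two disks of energy $E_1$, leaving a residual $J$-holomorphic cylinder of Maslov two from $L$ to $S$. Using $\pi_1(S)=0$, one can cap off the $S$-boundary loop of this residual cylinder by a disk in $S$ (which, being Lagrangian, contributes nothing to the Maslov index), producing a topological Maslov-two class on $L$ whose representative necessarily enters any small Weinstein neighborhood $U'$ of $S$. I would then extend the neck-stretching argument of Proposition 4.6: stretching along $\partial U'$ decomposes the residual cylinder into a holomorphic building whose top-level component in $U'$ (equivalently $Q_3$ after collapsing) has positive intersection $s$ with the divisor at infinity, forcing $\mu\ge 2s+2>2$ and contradicting Maslov two. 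Hence no such residual cylinder exists, stratum (iii) is empty, and (i) is the unique codimension-one boundary. The principal technical difficulty lies in verifying that the index inequality of Proposition 4.6 continues to hold for a building containing a cylindrical component with mixed asymptotics along Reeb orbits on $\partial U'$, which requires a careful accounting of the punctured components and their boundary conditions.
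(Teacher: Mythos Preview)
Your proposal follows essentially the same route as the paper: compactify by Gromov, identify the $\epsilon\to 0$ stratum with $\mathcal{M}_{1,1}(\beta,S)$, rule out the $\epsilon\to 1-|p|$ stratum via $L\cap S=\emptyset$, and exclude the remaining bubbles by invoking the neck-stretching argument of Proposition~4.6. Two points are worth noting.

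First, your enumeration is coarser than the paper's. The paper singles out the simultaneous limit $\epsilon\to 0$ and $p\to C_1$ and splits it into three sub-cases according to the ratio $\epsilon/(1-|p|)$: the limit $c>0$ produces an \emph{annulus} bubble attached to a residual disk on $L$, the limit $0$ gives two disks (one with an interior point on $S$), and the limit $+\infty$ reduces to the strip degeneration you call (ii). Your case (iii) (``disk bubble on $L$, residual cylinder'') is really the $c>0$ sub-case; the $c=0$ sub-case is a separate configuration that you should also exclude directly by Proposition~4.6 (it is a Maslov-two disk touching $S$). This is not a serious omission, since the same Proposition~4.6 argument disposes of all three, but your sentence ``multi-node configurations are at least codimension two'' does not cover these single-node limits.

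Second, your argument for (iv) has a genuine gap. You write ``by shrinking the Weinstein neighborhood we may assume $E_S>\omega(\beta)$,'' but Corollary~3.5 says $E_S$ grows with the size of the neighborhood, not the reverse; and in any event $L$ is already fixed inside a chosen $U$, so you cannot freely adjust $U$. The clean fix is to use the same Maslov mechanism you used for (iii): by Theorem~3.4 any non-constant disk on $S$ has Maslov index at least two, so the residual annulus after such a bubble has Maslov index at most two, and the extension of Proposition~4.6 to annuli (which you already invoke) excludes it. This is how the paper implicitly treats all disk bubbles in the final paragraph of its proof.
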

\begin{proof}
The construction of the compactification is by adding all possible degenerations. And the verification of the compactness will be proved by a gluing method.

First we consider the case when $p$ is fixed but $\epsilon$ goes to zero. Then in the limit we add a holomorphic disk with one interior point attaching on $S$. Conversely we need to do the gluing to resolve this interior point. The gluing analysis here is similar to the gluing when one study open Gromov-Witten theory and the boundary class of the given disk class is trivial. We describe the construction here following Proposition 3.8.27 and Subsection 7.4.1 in \cite{FOOO}.

For a holomorphic disk with an interior point mapping to $S$, the idea to ``blow up'' this interior point to get a holomorphic cylinder is first glue a constant disk to this point then convert this boundary gluing to a interior gluing. Let $D(1)$ be the unit disk in $\mathbb{C}$. Consider a holomorphic map
$$
u: D(1)\rightarrow X, \quad u(\partial D(1))\in L
$$
with two marked points. One marked point $z_{0}=(1, 0)$ on the boundary and one interior marked point $w_{0}=(0, 0)$ with $u(w_{0})\in S$. Let $D(\sigma)$ be a small disk with one boundary marked point $z_{1}$ and $\Sigma$ be a nodal surface such that
$$
\Sigma= D(1)\sqcup D(\sigma) \big/ (0,0)\sim (0,0).
$$
Then we consider a holomorphic map $w_{u}$, which is induced from $u$.
$$
w_{u}(z)=
\begin{cases}
u(z) \quad z\in D(1),\\
u(w_{0}) \quad z\in D(\sigma).
\end{cases}
$$
That is, the restriction of the map $w_{u}$ on $D(\sigma)$ is a constant map. Next, under transversality assumptions, several standard steps give us the gluing conclusion.
\begin{enumerate}
\item First we smooth the singular point of $\Sigma$ as an interior singular point to get the pregluing map, without being holomorphic.
\item Then we apply the implicit function theorem to get a genuine holomorphic cylinder with two boundary marked points $z_{0}$ and $z_{1}$. Here $z_{0}$ is on the positive boundary and $z_{1}$ is on the negative boundary.
\item We forget the marked point $z_{1}$ by a forgetful map. The image of the forgetful map is parameterized by the small disk $D(\sigma)$.
\item In the end we check that the implicit function theorem and the forgetful map is $S^{1}$-equivariant with respect to the standard rotation action on $D(\sigma)$. And we modulo this action to obtain a neighborhood of $u$ as $u\times D(\sigma)\big/S^{1}=u\times [0, \sigma)$.
\end{enumerate}
This cylinder-to-disk degeneration gives us a codimension one boundary
$$
\partial^{cy} \mathcal{M}_{1, k+1}^{cy}(\beta; (x_{1}, \cdots, x_{k}), S)
$$
which matches the moduli space
$$
-\mathcal{M}_{1, k+1}(\beta; (x_{1}, \cdots, x_{k}), S)
$$
up to an orientation. When $\beta$ is energy minimal, this codimension one boundary is the only boundary of the moduli space.

The second case is that $p$ is fixed and $\epsilon$ goes to $1-\abs{p}$. That is, two boundary $C_{1}$ and $C_{\epsilon}$ meet. In the limit a small region between these two circle boundaries converges to a holomorphic strip, see Figure 3. Since this strip splits from a finite energy map, itself also has finite energy. Hence the two ends of the strip converge to intersection points of $S$ and $L$, which is empty. In conclusion such a degeneration does not happen.

The third case is that when $\epsilon$ goes to zero and $p$ goes to $C_{1}$.
\begin{enumerate}
\item When $\lim\frac{\epsilon}{1-\abs{p}}=c> 0$. Then by a conformal change the domain becomes a disk with an annulus bubble, with the modulus of the annulus bubble determined by $c$.

\item When $\lim\frac{\epsilon}{1-\abs{p}}=0$. It is a similar case as above where $c=0$, the annulus bubble becomes a disk bubble, with one interior point attached to $S$.

\item When $\lim\frac{\epsilon}{1-\abs{p}}=+\infty$. Then two circle boundaries meet much faster than $\epsilon$ goes to zero. This degeneration will end up with a holomorphic strip as in the second case. So we exclude it in the same way.
\end{enumerate}

Other cases include disk splittings and sphere bubbles. As we remarked before, sphere bubbles are of codimension two and disk bubbles with boundary on $S$ do not happen by energy reasons. The only possible disk splittings are on the side of $L$, which will eventually give us the $A_{\infty}$-relation, together with (1) and (2) in the third case.

In conclusion we add all possible degenerations to compactify the moduli space. And there is a particular codimension one boundary component which comes from the circle boundary $C_{\epsilon}$ shrinking to a point.
\end{proof}

Then we glue the two moduli spaces together to obtain a new moduli space.
\begin{corollary}\label{union}
There are fundamental chains on moduli spaces $\mathcal{M}_{1, k+1}(\beta; (x_{1}, \cdots, x_{k}), K)$ and $\mathcal{M}_{1, k+1}^{cy}(\beta; (x_{1}, \cdots, x_{k}), S)$ such that we can glue them along their boundaries to obtain a moduli space
$$
\begin{aligned}
&\mathcal{M}_{1, k+1}(\beta; (x_{1}, \cdots, x_{k}), K+S)\\
=& \mathcal{M}_{1, k+1}(\beta; (x_{1}, \cdots, x_{k}), K)\sqcup\mathcal{M}_{1, k+1}^{cy}(\beta; (x_{1}, \cdots, x_{k}), S)\big/ \sim
\end{aligned}
$$
where the equivalence $\sim$ is given in Theorem \ref{compactification}.
\end{corollary}
\begin{proof}
The fundamental chains on related moduli spaces are constructed by classical methods. We start with energy minimal classes to glue moduli spaces such that corresponding holomorphic curves are somewhere injective. Then for general classes we use domain dependent almost complex structures $J_{\Sigma}$ for $\Sigma$ a disk or a cylinder. For each point $p\in\Sigma$, we assume it is in the component of $J$ in Condition 1.1. Moreover when $\Sigma$ is a cylinder, we also requires that the almost complex structure depends on the modulus of domain.
\end{proof}

By using the first boundary marked point we get a singular chain
$$
ev: \mathcal{M}_{1, k+1}(\beta; (x_{1}, \cdots, x_{k}); K+S)\rightarrow L.
$$
The expected dimension of this fundamental chain is
$$
\dim \mathcal{M}_{1, k+1}(\beta; (x_{1}, \cdots, x_{k}); K_{i}+S)=\mu(\beta)+k +1- \sum_{j=1}^{k}(3- d_{j})
$$
where $d_{j}$ is the dimension of the singular chain $x_{j}$.

These chains
$$
ev: \mathcal{M}_{1, k+1}(\beta; (x_{1}, \cdots, x_{k}); K+S)\rightarrow L
$$
will play the role of $\mathfrak{q}_{l,k; \beta}$ when the interior marked point is attached on a chain $K$, not a cycle. For $\mathfrak{b}=K$ we define
$$
\mathfrak{q}_{1, k; \beta}^{cy, \mathfrak{b}}: C^{*}(L)^{\otimes k}\rightarrow C^{*}(L)
$$
by
\begin{equation}
(x_{1}, \cdots, x_{k}) \mapsto (ev: \mathcal{M}_{1, k+1}(\beta; (x_{1}, \cdots, x_{k}); K+S)\rightarrow L)
\end{equation}
and extend it linearly over $\Lambda_{+}$. That is, for $\mathfrak{b}= wK$ with $w\in \Lambda_{+}$, we define
\begin{equation}
\mathfrak{q}_{1, k; \beta}^{cy, \mathfrak{b}}(x_{1}, \cdots, x_{k}):= w\cdot\mathfrak{q}_{1, k; \beta}^{cy, K}(x_{1}, \cdots, x_{k}).
\end{equation}
Similarly we define $\mathfrak{q}_{1, 0; \beta}^{cy, \mathfrak{b}}$ as the chain
\begin{equation}
w\cdot (ev: \mathcal{M}_{1, 1}(\beta; K+S)\rightarrow L)
\end{equation}
with coefficient $w$. Then we define the operator $\mathfrak{q}_{1, k}^{cy, \mathfrak{b}}$ to be
\begin{equation}
\mathfrak{q}_{1, k}^{cy, \mathfrak{b}}= \sum_{\beta} \mathfrak{q}_{1, k; \beta}^{cy, \mathfrak{b}}\cdot T^{\omega(\beta)}.
\end{equation}
By the Gromov compactness theorem the right hand side is a finite sum. These operators are only defined for class $\beta$ with $\omega(\beta)<\min\lbrace E_{S}, E_{+}\rbrace$. Otherwise, after weighted by $T^{\omega(\beta)+v(w)}$, they vanish automatically since we work modulo $T^{E}$.

As we mentioned in the beginning of Section 2, here we abuse the notations between singular chains and cochains via the Poincar\'e duality. Geometrically when we define moduli space of holomorphic curves with point constraints, we think our curves with points attached on certain chains. But for algebraic convenience we think these chains as cochains with gradings reversed and shifted. And for notational simplicity, we use $K$ to both represent the 4-chain $K$ or $PD(K)$, as a degree two cochain.

Being analogous to (\ref{bulk}), we define
\begin{equation}
\begin{aligned}
\mathfrak{m}^{cy,\mathfrak{b}}_{k}(x_{1}, \cdots, x_{k})& :=\sum_{l=0}^{\infty}\mathfrak{q}_{l, k}^{cy,\mathfrak{b}}(\mathfrak{b}^{\otimes l}; x_{1}, \cdots, x_{k})\\
& :=\mathfrak{q}_{0, k}^{cy,\mathfrak{b}}(1; x_{1}, \cdots, x_{k})+ \mathfrak{q}_{1, k}^{cy,\mathfrak{b}}(\mathfrak{b}; x_{1}, \cdots, x_{k}).
\end{aligned}
\end{equation}
Here operators $\mathfrak{q}_{1, k}^{cy,\mathfrak{b}}$ are defined as above by using holomorphic cylinders. The operators $\mathfrak{q}_{0, k}^{cy,\mathfrak{b}}$ are defined to be $\mathfrak{m}_{k}$, see (\ref{q0}), which are the operators in the $A_{\infty}$-structure on $L$ without using interior marked points. Hence by definition, the above definition becomes
\begin{equation}
\begin{aligned}
\mathfrak{m}^{cy,\mathfrak{b}}_{k}(x_{1}, \cdots, x_{k})& :=\sum_{l=0}^{\infty}\mathfrak{q}_{l, k}^{cy,\mathfrak{b}}(\mathfrak{b}^{\otimes l}; x_{1}, \cdots, x_{k})\\
&=\mathfrak{m}_{k}(x_{1}, \cdots, x_{k})+ \mathfrak{q}_{1, k}^{cy,\mathfrak{b}}(\mathfrak{b}; x_{1}, \cdots, x_{k}).
\end{aligned}
\end{equation}
Now we hope it is more clear that these operators serve as a zeroth and first order approximation of the genuine $A_{\infty}$-structure with bulk deformations, since all $\mathfrak{q}_{l, k}^{cy,\mathfrak{b}}$ are defined to be zero for $l\geq 2$. We will show that they also give an $A_{\infty}$-relation modulo some energy.

\begin{proposition}
With above notations, the operators $\lbrace \mathfrak{m}^{cy,\mathfrak{b}}_{k} \rbrace$ give a curved $A_{\infty}$-relation on $C^{*}(L)$ modulo $T^{E}$.
\end{proposition}
\begin{proof}
To prove the $A_{\infty}$-relation for $\lbrace \mathfrak{m}^{cy,\mathfrak{b}}_{k} \rbrace$ we need to check for each $k$ that
\begin{equation}\label{Ain}
\sum_{k_{1}+k_{2}=k+1}\sum_{i}(-1)^{*} \mathfrak{m}^{cy,\mathfrak{b}}_{k_{1}}(x_{1}, \cdots, \mathfrak{m}^{cy,\mathfrak{b}}_{k_{2}}(x_{i}, \cdots, x_{i+k_{2}-1}), \cdots, x_{k}) \equiv 0 \mod T^{E}
\end{equation}
where $*= \deg x_{1}+\cdots +\deg x_{i-1}+i-1$. This can be shown by looking at boundaries of various one-dimensional moduli spaces.

Note that $\mathfrak{m}^{cy,\mathfrak{b}}_{k}= \mathfrak{m}_{k}+ \mathfrak{q}_{1, k}^{cy,\mathfrak{b}}$ by definition. There are four types of compositions in (\ref{Ain}):
$$
\mathfrak{m}_{k_{1}}\circ \mathfrak{m}_{k_{2}},\quad \mathfrak{q}_{1, k_{1}}^{cy,\mathfrak{b}}\circ \mathfrak{m}_{k_{2}},\quad \mathfrak{m}_{k_{1}}\circ \mathfrak{q}_{1, k_{2}}^{cy,\mathfrak{b}},\quad \mathfrak{q}_{1, k_{1}}^{cy,\mathfrak{b}}\circ \mathfrak{q}_{1, k_{2}}^{cy,\mathfrak{b}}.
$$
The first type of terms $\mathfrak{m}_{k_{1}}\circ \mathfrak{m}_{k_{2}}$ is zero since $\mathfrak{m}_{k}$ satisfies an $A_{\infty}$-relation. The last type of terms $\mathfrak{q}_{1, k_{1}}^{cy,\mathfrak{b}}\circ\mathfrak{q}_{1, k_{2}}^{cy,\mathfrak{b}}$ is zero modulo $T^{E}$ since both of them are weighted by at least $T^{v(w)}$.

Now it remains to show that the sum of second type and third type terms
$$
\pm\mathfrak{q}_{1, k_{1}}^{cy,\mathfrak{b}}\circ \mathfrak{m}_{k_{2}}\pm \mathfrak{m}_{k_{1}}\circ \mathfrak{q}_{1, k_{2}}^{cy,\mathfrak{b}}
$$
is zero. We consider the moduli space
$$
\mathcal{M}_{1, k+1}(\beta; (x_{1}, \cdots, x_{k}); K+S)
$$
in Theorem \ref{compactification}. For simplicity, we assume that $x_{1}, \cdots, x_{k}$ are singular \textit{cycles}. By definition, this moduli space is a union of two moduli spaces. One part is a moduli space of holomorphic cylinders with boundary insertions. The other part is a moduli space of holomorphic disks with boundary insertions and one interior insertion. Both parts have a codimension one boundary coming from cylinder-to-disk degeneration or marked point going to the boundary of the chain $K$. And by gluing these two moduli spaces together, this codimension one boundary is cancelled. Then we look at other codimension one boundaries. With this understood, the following check is similar to the usual $A_{\infty}$-relations on a monotone spin Lagrangian submanifold.

Under Condition \ref{condition}, there is no disk bubble on the side of Lagrangian sphere $S$, with energy less than $E_{S}$. The only codimension one boundaries are disk splitting on the side of the Lagrangian torus $L$. So the codimension one boundary of the cylinder part corresponds to a disk splitting from a cylinder. And the codimension one boundary of the disk part corresponds to a disk splitting from a disk with interior marked point attached on $K$. Those codimension one boundaries together give the terms $\pm\mathfrak{q}_{1, k_{1}}^{cy,\mathfrak{b}}\circ \mathfrak{m}_{k_{2}; \beta_{2}}\pm \mathfrak{m}_{k_{1}; \beta_{1}}\circ \mathfrak{q}_{1, k_{2}}^{cy,\mathfrak{b}}$ with $\beta_{1}, \beta_{2}\neq 0$. That is, the disk splitting is not a constant disk (see Figure 4). On the other hand, the operator $\mathfrak{m}_{1;\beta=0}$ is defined as taking the boundary of a singular chain weighted by signs. Then $\mathfrak{m}_{1;\beta=0}\circ \mathfrak{q}_{1, k_{1}+k_{2}-1}^{cy,\mathfrak{b}}$ corresponds to the codimension one boundary of the moduli space $\mathcal{M}_{1, k+1}(\beta; (x_{1}, \cdots, x_{k}); K+S)$. Therefore after a careful check of signs, those two types of operators cancel with each other since they both correspond to the codimension one boundary of a moduli space. We assume $L$ is oriented hence spin, the signs will be satisfied by similar computations in \cite{FOOO} for the genuine bulk-deformed $A_{\infty}$-relations. Then we complete the proof of Theorem \ref{construction}.

We remark that here we assume $x_{1}, \cdots, x_{k}$ are singular cycles. If they are chains then the moduli space $\mathcal{M}_{1, k+1}(\beta; (x_{1}, \cdots, x_{k}); K+S)$ will have more codimension one boundaries when boundary marked points goto $\partial x_{i}$. These boundary terms will cancel with $\mathfrak{q}_{1, k_{1}+k_{2}-1}^{cy,\mathfrak{b}}\circ \mathfrak{m}_{1;\beta=0}$ in the $A_{\infty}$-relation.
\end{proof}

\begin{figure}
  \begin{tikzpicture}
  \draw (1,0) circle [radius=0.8];
  \draw (1,0) circle [radius=0.25];
  \draw [fill] (1.8,0) circle [radius=0.05];
  \draw [fill] (1,0.8) circle [radius=0.05];
  \draw [fill] (0.2,0) circle [radius=0.05];
  \draw [fill] (1,-0.8) circle [radius=0.05];
  \draw [->] (2.5,0)--(3.5,0);
  \draw (4.5,0) circle [radius=0.5];
  \draw (5.5,0) circle [radius=0.5];
  \draw (5.5,0) circle [radius=0.15];
  \draw [fill] (4.15,0.35) circle [radius=0.05];
  \draw [fill] (4.15,-0.35) circle [radius=0.05];
  \draw [fill] (5.85,0.35) circle [radius=0.05];
  \draw [fill] (5.85,-0.35) circle [radius=0.05];
  \node at (6.5,0) {$\pm$};
  \draw (7.5,0) circle [radius=0.5];
  \draw (7.5,0) circle [radius=0.15];
  \draw (8.5,0) circle [radius=0.5];
  \draw [fill] (7.15,0.35) circle [radius=0.05];
  \draw [fill] (7.15,-0.35) circle [radius=0.05];
  \draw [fill] (8.85,0.35) circle [radius=0.05];
  \draw [fill] (8.85,-0.35) circle [radius=0.05];

  \draw (1,-2) circle [radius=0.8];
  \draw [fill] (1,-2) circle [radius=0.05];
  \draw [fill] (1.8,-2) circle [radius=0.05];
  \draw [fill] (1,-1.2) circle [radius=0.05];
  \draw [fill] (0.2,-2) circle [radius=0.05];
  \draw [fill] (1,-2.8) circle [radius=0.05];
  \draw [->] (2.5,-2)--(3.5,-2);
  \draw (4.5,-2) circle [radius=0.5];
  \draw (5.5,-2) circle [radius=0.5];
  \draw [fill] (5.5,-2) circle [radius=0.05];
  \draw [fill] (4.15,-1.65) circle [radius=0.05];
  \draw [fill] (4.15,-2.35) circle [radius=0.05];
  \draw [fill] (5.85,-1.65) circle [radius=0.05];
  \draw [fill] (5.85,-2.35) circle [radius=0.05];
  \node at (5,-1) {$\mathfrak{q}_{1, k_{1}}^{cy,\mathfrak{b}}\circ \mathfrak{m}_{k_{2}}$};
  \node at (6.5,-2) {$\pm$};
  \node at (8,-1) {$\mathfrak{m}_{k_{1}}\circ \mathfrak{q}_{1, k_{2}}^{cy,\mathfrak{b}}$};
  \draw (7.5,-2) circle [radius=0.5];
  \draw [fill] (7.5,-2) circle [radius=0.05];
  \draw (8.5,-2) circle [radius=0.5];
  \draw [fill] (7.15,-1.65) circle [radius=0.05];
  \draw [fill] (7.15,-2.35) circle [radius=0.05];
  \draw [fill] (8.85,-1.65) circle [radius=0.05];
  \draw [fill] (8.85,-2.35) circle [radius=0.05];
  \end{tikzpicture}
  \caption{Splittings of disks and cylinders.}
\end{figure}

We expect that the transversality result needed to define the above $A_{\infty}$-structure can be obtained by using one almost complex structure $J$ satisfying Condition \ref{condition} via virtual perturbation, or a family of $J$'s in a small neighborhood of a $J$ satisfying Condition \ref{condition} via classical means, see Remark \ref{div}. For another $J'$ satisfying Condition \ref{condition}, if we can connect $J$ with $J'$ by a family of $J$'s satisfying Condition \ref{condition} except $(2)$, we will get a homotopy equivalence between $A_{\infty}$-structures, by a cobordism argument.

\begin{proposition}
Let $\lbrace J_{t}\rbrace_{t\in[0,1]}$ be a smooth family of almost complex structures such that $J_{0}$ and $J_{1}$ satisfy Condition \ref{condition} and $J_{t}$ satisfies Condition \ref{condition} except $(2)$ for each $t\in[0,1]$. Then two $A_{\infty}$-structures $\mathfrak{m}^{cy,\mathfrak{b}; J_{0}}_{k}$ and $\mathfrak{m}^{cy,\mathfrak{b}; J_{1}}_{k}$ are homotopy equivalent.
\end{proposition}

Now we remark about the dependence of the choice of the chain $K$ we use. Suppose that we have two four-chains $K$ and $K'$ such that $\partial K=\partial K' =S$, $K\cap L=K'\cap L=\emptyset$ and they represent the same class in $H_{4}(X, S)$. Then we expect that there is an $A_{\infty}$-homotopy equivalence between $\mathfrak{m}^{cy,\mathfrak{b}}_{k}$ and $\mathfrak{m}^{cy,\mathfrak{b}'}_{k}$, where $\mathfrak{b}=wK$ and $\mathfrak{b}'=wK'$. If $K$ and $K'$ can be joined by a family of four-chains $K_{t}$ with $\partial K_{t}= S$ such that $K_{t}\cap L=\emptyset$ for all $t$, then the above $A_{\infty}$-homotopy equivalence induces an identity map on potential functions. In general there may be some $K_{t}$ intersecting $L$. Then the above $A_{\infty}$-homotopy equivalence induces a coordinate change on potential functions. For genuine bulk deformations, this $A_{\infty}$-homotopy equivalence (and the coordinate change) is discussed in \cite{FOOO} and Section 2.5 in \cite{FOOO5}. We expect the technique therein combined with our gluing of the interior marked point gives the desired $A_{\infty}$-homotopy equivalence.

Once we have the $A_{\infty}$-relation modulo some energy, we can talk about the \textit{canonical model} of $\lbrace \mathfrak{m}^{cy,\mathfrak{b}}_{k} \rbrace$, see \cite{FOOO-ca}. That is, we have a new collection of operators $\lbrace \mathfrak{m}^{cy,\mathfrak{b}, can}_{k} \rbrace$ on the cohomology $H^{*}(L)$, such that $\lbrace \mathfrak{m}^{cy,\mathfrak{b}, can}_{k} \rbrace$ give an $A_{\infty}$-structure on $H^{*}(L)$ and $(\lbrace \mathfrak{m}^{cy,\mathfrak{b}, can}_{k} \rbrace, H^{*}(L))$ is homotopy equivalent to $(\lbrace \mathfrak{m}^{cy,\mathfrak{b}}_{k} \rbrace, C^{*}(L))$. From now on we will only use the canonical model $(\lbrace \mathfrak{m}^{cy,\mathfrak{b}, can}_{k} \rbrace, H^{*}(L))$ and we abuse the notation just write $\mathfrak{m}^{cy,\mathfrak{b}}_{k}$ instead of $\mathfrak{m}^{cy,\mathfrak{b}, can}_{k}$.

As in the usual Lagrangian Floer theory, an $A_{\infty}$-structure gives a complex. Now we define the $\mathfrak{b}$-deformed Floer complex, analogous to (\ref{bulk}) and (\ref{boundary operator}).

\begin{definition}
For $\mathfrak{b}=wK$ with $w\in \Lambda_{+}$ and $\rho\in H^{1}(L; \Lambda_{+})$, we define the operator
$$
\partial_{cy, \mathfrak{b}}^{\rho}: H^{*}(L; \Lambda_{+})\rightarrow H^{*}(L; \Lambda_{+})
$$
by
$$
\partial_{cy, \mathfrak{b}}^{\rho}(x)=\sum_{\beta} e^{\rho(\partial\beta)} \mathfrak{q}_{1,1;\beta}^{cy, \mathfrak{b}}(x)\cdot T^{\omega(\beta)}.
$$

The deformed complex is defined by
$$
(H^{*}(L; \Lambda_{+}), d^{\rho}_{cy, \mathfrak{b}}=\delta^{\rho}+ \partial_{cy, \mathfrak{b}}^{\rho}).
$$
\end{definition}
Here $\delta^{\rho}$ is similarly defined as
$$
\delta^{\rho}:=\mathfrak{m}_{1}^{\rho}(x)=\sum_{\beta} e^{\rho(\partial\beta)}\mathfrak{m}_{1,\beta}(x)\cdot T^{\omega(\beta)}.
$$
Then the $A_{\infty}$-formula (\ref{Ain}) gives us a complex.

\begin{proposition}
The operator $d^{\rho}_{cy, \mathfrak{b}}$ satisfies that
$$
(d^{\rho}_{cy, \mathfrak{b}})^{2} \equiv 0 \mod T^{E}.
$$
Hence we have a cohomology modulo $T^{E}$ which we write as $HF_{cy}(L; (\mathfrak{b}, \rho))$.
\end{proposition}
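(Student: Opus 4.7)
The plan is to expand the square into four pieces and control each one separately:
\begin{equation*}
(d^{\rho}_{cy, \mathfrak{b}})^{2} = (\delta^{\rho})^{2} + \delta^{\rho}\partial^{\rho}_{cy, \mathfrak{b}} + \partial^{\rho}_{cy, \mathfrak{b}}\delta^{\rho} + (\partial^{\rho}_{cy, \mathfrak{b}})^{2}.
\end{equation*}
First, $(\delta^{\rho})^{2}=0$ holds exactly (not merely modulo $T^{2E}$): under Condition 1.1, Lemma 3.3 guarantees that every local system $\rho$ is a weak bounding cochain for the standard $A_{\infty}$-structure on $L$, so the $A_{\infty}$-Maurer--Cartan relation forces $(\mathfrak{m}_{1}^{\rho})^{2}=0$. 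Second, by construction (4.4)--(4.5) the operator $\partial^{\rho}_{cy, \mathfrak{b}}$ is linear in $w$ where $\mathfrak{b}=w\cdot PD(K_{i})$, so its valuation is at least $v(w)\geq E$; hence $(\partial^{\rho}_{cy, \mathfrak{b}})^{2}$ has valuation at least $2E$ and vanishes modulo $T^{2E}$.

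The heart of the argument is the cross term $\delta^{\rho}\partial^{\rho}_{cy, \mathfrak{b}}+\partial^{\rho}_{cy, \mathfrak{b}}\delta^{\rho}$. Because both summands carry a factor of $w$, any contribution of total energy $\geq 2E$ is already killed, so I only need to analyse disk classes $\beta$ with $\omega(\beta)+v(w)<2E$, i.e.\ $\omega(\beta)<E$. For such $\beta$, I will apply a Stokes-type vanishing to the one-dimensional glued moduli space $\mathcal{M}_{1,2}(\beta; x; K_{i}+S)$ produced in Theorem 4.10, where $x$ is a chain of the correct codimension to cut down to dimension one. Along its boundary the following strata can appear: (a) the cylinder-to-disk seam, where the circle boundary $C_{\epsilon}$ shrinks / the interior marked point escapes through $\partial K_{i}=S$; (b) splitting off a boundary disk bubble; (c) domain degenerations of the annulus (two boundary circles collide, or an annulus bubble forms); (d) sphere bubbles. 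Stratum (a) is exactly the common seam along which the disk piece $\mathcal{M}_{1,2}(\beta; x; K_{i})$ and the cylinder piece $\mathcal{M}_{1,2}^{cy}(\beta; x; S)$ are glued, so its two sides cancel with opposite orientations by the construction of $\mathcal{M}_{1,2}(\beta; x; K_{i}+S)$. Strata (c) are excluded exactly as in Theorem 4.9 and Proposition 4.6: a strip limit would yield a finite-energy strip between the disjoint Lagrangians $S$ and $L$, which does not exist; an annulus bubble contributes in codimension one only if its Maslov index is $\leq 2$, which is again ruled out by Condition 1.1 together with the SFT-stretching argument of Proposition 4.6. Sphere bubbles (d) are codimension two under Condition 1.1(3).

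Consequently the only surviving boundary stratum is (b), and a standard disk-splitting analysis identifies its signed contribution with $\delta^{\rho}\partial^{\rho}_{cy, \mathfrak{b}}+\partial^{\rho}_{cy, \mathfrak{b}}\delta^{\rho}$: when the bubble carries the cylinder/marked interior point, the remaining disk contributes to $\mathfrak{m}_{1}^{\rho}$ (on whichever side the output marked point sits), and conversely when the cylinder/marked disk stays on the principal component. Summing over all such splittings and applying $\partial(\text{1-dim oriented manifold})=0$ yields the desired cancellation modulo $T^{2E}$. The main obstacle I expect is bookkeeping: verifying that the orientation conventions make stratum (a) cancel (not double) and that the disk-split contributions in stratum (b) reproduce both $\delta^{\rho}\partial^{\rho}_{cy, \mathfrak{b}}$ and $\partial^{\rho}_{cy, \mathfrak{b}}\delta^{\rho}$ with matching signs. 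The argument parallels the standard proof that $(\mathfrak{m}_{1}^{b})^{2}=0$ for weak bounding cochains, but performed on the enlarged chain-and-cylinder moduli space $\mathcal{M}_{1,2}(\beta; x; K_{i}+S)$ in place of the usual disk moduli.
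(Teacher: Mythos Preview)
Your proposal is correct and follows essentially the same approach as the paper: expand the square into four pieces, kill $(\delta^{\rho})^{2}$ via Condition~1.1, kill $(\partial^{\rho}_{cy,\mathfrak{b}})^{2}$ by the valuation bound $v(\mathfrak{b})\geq E$, and handle the cross terms by analysing the codimension-one boundary of the one-dimensional glued moduli space, where the cylinder-to-disk seam has already been cancelled internally so that only disk splittings survive. Your stratum-by-stratum enumeration (a)--(d) is in fact more explicit than the paper's own argument, but the logic is identical.
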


\begin{remark}
In this section we define the operators $\mathfrak{m}_{1}^{\rho}$ and $\partial_{cy, \mathfrak{b}}^{\rho}$ by using local systems $\rho$, which is different in the usual definition of bulk-deformed potential functions, where weak bounding cochains are used. But under Condition \ref{condition} there is no disk bubble with non-positive Maslov index, these two approaches are the same. This is proved in Section 4.1 in \cite{FOOO5} for the genuine bulk deformation case with all operators $\mathfrak{q}_{l,k;\beta}$. And here we only need to adapt the proof therein for our operators $\mathfrak{q}_{1,k;\beta}^{cy, \mathfrak{b}}$. More precisely, the proof boils down to prove the divisor axiom for the operator $\mathfrak{q}_{1, k; \beta}^{cy, \mathfrak{b}}$, which is given by the integration-along-fiber technique on the moduli spaces of disks, see Section 4.1 in \cite{FOOO5} and Lemma 7.1 in \cite{FOOO2} for the proof, or Section 3 in \cite{F} for more original statements. We expect those technique to work to establish the divisor axiom for our operators here, also see Remark 4.12.
\end{remark}

Therefore we obtain a cohomology theory for a fixed bulk chain $\mathfrak{b}=wK$. Its underlying complex is the singular cohomology of $L$ and its differential counts a combination of holomorphic disks and cylinders. An advantage of this cohomology is that we can do explicit computation by the help of a $\mathfrak{b}$-deformed potential function. For example, the existence of a critical point of the potential function gives us a non-vanishing result of the cohomology.

Now we assume that $L$ is a torus and define a $\mathfrak{b}$-deformed potential function
$$
\mathfrak{PO}^{cy, \mathfrak{b}}: H^{1}(L; \Lambda_{+})\rightarrow \Lambda_{+}.
$$
For a group homomorphism
$$
\rho: \pi_{1}(L)=H_{1}(L; \mathbb{Z})\rightarrow \Lambda_{0}-\Lambda_{+}
$$
we regard $e^{\rho(\cdot)}$ as an element in $H^{1}(L; \Lambda_{+})$. Then we define
\begin{equation}
\mathfrak{PO}^{cy, \mathfrak{b}}(\rho)= \sum_{\beta}e^{\rho(\partial\beta)}T^{\omega(\beta)}(\mathfrak{m}_{0; \beta}(1)+ \mathfrak{q}_{1, 0;\beta}^{cy, \mathfrak{b}}(1))
\end{equation}
where $\mathfrak{m}_{0;\beta}$ is the (undeformed) $A_{\infty}$-structure on $H^{*}(L)$, see Section 2. Here
$$
\mathfrak{m}_{0; \beta}(1)= PD([L])(\mathfrak{m}_{0; \beta})
$$
and
$$
\mathfrak{q}_{1, 0;\beta}^{cy, \mathfrak{b}}(1)=PD([L])(\mathfrak{q}_{1, 0;\beta}^{cy, \mathfrak{b}})
$$
are pairings between cochains and chains, which give us two numbers. We will call $\mathfrak{q}_{1, 0;\beta}^{cy, \mathfrak{b}}(1)$ a mixed type one-pointed open Gromov-Witten invariant of class $\beta$, since it has both disk and cylinder contributions. In particular cases, we will see that the mixed invariant is a sum of the usual one-pointed open Gromov-Witten invariant and the cylinder contribution. We remark that this mixed type invariant is invariant under the deformation of $J$ within Condition \ref{condition}, and small Hamiltonian perturbation $\phi(L)$ of $L$ if $\phi(L)$ satisfies Condition \ref{condition} with respect to the same $J$.

\begin{proposition}\label{decom}
Let $L$ be a Lagrangian torus. If the potential function $\mathfrak{PO}^{cy, \mathfrak{b}}(\rho)$ for $L$ has a critical point for some $(\mathfrak{b}, \rho)$ modulo $T^{E'}$, $E'< E$, then the deformed Floer cohomology satisfies that
$$
HF_{cy}(L; (\mathfrak{b}, \rho))\cong H^{*}(L; \dfrac{\Lambda_{0}}{T^{E'}\Lambda_{0}})\cong (\dfrac{\Lambda_{0}}{T^{E'}\Lambda_{0}})^{\oplus 8}.
$$
\end{proposition}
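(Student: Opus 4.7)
The plan is to adapt the standard FOOO argument relating critical points of the potential function to nonvanishing Floer cohomology, adjusted to our setting where $(d^{\rho}_{cy,\mathfrak{b}})^{2}\equiv 0$ only modulo $T^{2E}$. The key observation is that for a Lagrangian torus $L=T^{3}$ we have the exterior algebra identification
$$
H^{*}(L;\Lambda_{0})\cong \Lambda^{*}H^{1}(L;\Lambda_{0}),
$$
which has total rank $2^{3}=8$, accounting for the right-hand side of the claimed isomorphism.

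First I would use the divisor axiom (justified for our operators $\mathfrak{q}_{1,k;\beta}^{cy,\mathfrak{b}}$ by Remark 4.15, following \cite{FOOO5}, Section 4.1) to compute $d^{\rho}_{cy,\mathfrak{b}}$ on the generators. Pick a basis $e_{1},e_{2},e_{3}$ of $H^{1}(L;\mathbb{Z})$ and let $y_{i}=e^{\rho(e_{i})}$ be the corresponding exponentiated coordinates. The divisor axiom yields, for each basis element,
$$
d^{\rho}_{cy,\mathfrak{b}}(e_{i})=\sum_{\beta}\langle e_{i},\partial\beta\rangle e^{\rho(\partial\beta)}T^{\omega(\beta)}\left(n_{\beta}+n^{cy,\mathfrak{b}}_{\beta}\right)\cdot PD[L]= y_{i}\frac{\partial}{\partial y_{i}}\mathfrak{PO}^{cy,\mathfrak{b}}(\rho)\cdot PD[L]
$$
modulo $T^{E'}$, where I collect the disk count $n_{\beta}$ and the cylinder-corrected count $n^{cy,\mathfrak{b}}_{\beta}$ coming from $\mathfrak{q}^{cy,\mathfrak{b}}_{1,1;\beta}$ via the gluing of Theorem 4.11.

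Next I would extend this to all of $H^{*}(L;\Lambda_{0})$ by the Leibniz-type behavior: the operator $d^{\rho}_{cy,\mathfrak{b}}$ acts on the exterior algebra as contraction by the one-form
$$
\eta_{\mathfrak{b}}^{\rho}=\sum_{i=1}^{3}y_{i}\frac{\partial \mathfrak{PO}^{cy,\mathfrak{b}}}{\partial y_{i}}\,e_{i}^{\vee}\in H_{1}(L;\Lambda_{0}),
$$
up to terms in $T^{E'}\Lambda_{0}$; this is the cylinder-corrected analogue of the standard computation in \cite{FOOO1} for monotone toric fibers, which passes through unchanged since our operators obey the divisor axiom in the relevant energy range. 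The criticality hypothesis says $\partial\mathfrak{PO}^{cy,\mathfrak{b}}/\partial y_{i}\equiv 0\bmod T^{E'}$ for all $i$, so $\eta_{\mathfrak{b}}^{\rho}\equiv 0\bmod T^{E'}$, and therefore $d^{\rho}_{cy,\mathfrak{b}}\equiv 0$ as an operator on $H^{*}(L;\Lambda_{0}/T^{E'}\Lambda_{0})$.

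Finally, passing to cohomology over the quotient ring $\Lambda_{0}/T^{E'}\Lambda_{0}$ the whole complex is a chain complex with trivial differential, so
$$
HF_{cy}(L;(\mathfrak{b},\rho))\cong H^{*}(L;\Lambda_{0}/T^{E'}\Lambda_{0})\cong (\Lambda_{0}/T^{E'}\Lambda_{0})^{\oplus 8}.
$$
The hypothesis $E'<2E$ ensures this reduction is legitimate: Proposition 4.16 only guarantees $(d^{\rho}_{cy,\mathfrak{b}})^{2}\equiv 0\bmod T^{2E}$, so working modulo $T^{E'}\subset T^{2E}$ gives an honest complex. I expect the main obstacle to be the bookkeeping that justifies the divisor-axiom reduction for $\mathfrak{q}^{cy,\mathfrak{b}}_{1,k;\beta}$ simultaneously with the disk operators, and the verification that the Leibniz-type extension of $d^{\rho}_{cy,\mathfrak{b}}$ to higher-degree classes is valid modulo $T^{E'}$ (rather than exactly)—both require careful control of the energy filtration of the contributions from cylinders that glue to bubbled disks. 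Once that is established, the vanishing of the contraction operator immediately forces the cohomology to collapse to the underlying module.
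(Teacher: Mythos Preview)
Your proposal is correct and follows essentially the same approach as the paper: use the divisor axiom to identify $d^{\rho}_{cy,\mathfrak{b}}(e_{i})$ with the $i$th partial derivative of $\mathfrak{PO}^{cy,\mathfrak{b}}$ (the paper's computation (4.13)), then extend to all of $H^{*}(L)$ using that the cohomology of a torus is generated in degree one (the paper phrases this as an ``induction'' and cites Section 13 of \cite{FOOO1}, which is exactly your Leibniz/contraction description). One small slip: when you invoke $(d^{\rho}_{cy,\mathfrak{b}})^{2}\equiv 0\bmod T^{2E}$ you should cite Proposition 4.14, not 4.16 (the latter is the statement you are proving).
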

\begin{proof}
By a direct computation below we can find that if there is a critical point for some $(\mathfrak{b}, \rho)$ modulo $T^{E'}$ then the deformed boundary operator $d^{\rho}_{cy, \mathfrak{b}}\equiv 0$ modulo $T^{E'}$. So the cohomology is isomorphic to the underlying complex.

Let $\lbrace e_{i}\rbrace_{i=1, 2, 3}$ be a set of generators of $ H^{1}(L; \mathbb{Z})$. Then any $\rho\in H^{1}(L; \Lambda_{+})$ can be written as $\rho(x)=\sum_{i=1}^{3} x_{i}e_{i}$. And we have that
\begin{equation}
\begin{aligned}
\frac{\partial}{\partial x_{1}} \mathfrak{PO}^{cy, \mathfrak{b}}(\rho(x))=& \frac{\partial}{\partial x_{1}} \sum_{\beta}e^{\rho(\partial\beta)}T^{\omega(\beta)}(\mathfrak{m}_{0; \beta}(1)+ \mathfrak{q}_{1, 0;\beta}^{cy, \mathfrak{b}}(1))\\
=& \frac{\partial}{\partial x_{1}} \sum_{\beta}e^{(x_{1}e_{1}+x_{2}e_{2}+x_{3}e_{3})(\partial\beta)}T^{\omega(\beta)}(\mathfrak{m}_{0; \beta}(1)+ \mathfrak{q}_{1, 0;\beta}^{cy, \mathfrak{b}}(1))\\
=& \sum_{\beta}(e_{1}(\partial\beta))\cdot e^{(x_{1}e_{1}+x_{2}e_{2}+x_{3}e_{3})(\partial\beta)}T^{\omega(\beta)}(\mathfrak{m}_{0; \beta}(1)+ \mathfrak{q}_{1, 0;\beta}^{cy, \mathfrak{b}}(1))\\
=& \sum_{\beta}e^{(x_{1}e_{1}+x_{2}e_{2}+x_{3}e_{3})(\partial\beta)}T^{\omega(\beta)}(\mathfrak{m}_{1; \beta}(e_{1})+ \mathfrak{q}_{1, 1;\beta}^{cy, \mathfrak{b}}(e_{1}))\\
=& \delta^{\rho}(e_{1})+ \partial_{cy, \mathfrak{b}}^{\rho}(e_{1})=d^{\rho}_{cy, \mathfrak{b}}(e_{1}).
\end{aligned}
\end{equation}
The third last equality uses the divisor axiom, see (\ref{divisor axiom}). For other partial derivatives, similar computation also works. Therefore if all the partial derivatives of $\mathfrak{PO}^{cy, \mathfrak{b}}$ vanishes then our deformed Floer boundary operators vanishes on $H^{1}(L; \Lambda_{+})$. Since $L$ is a torus of which the cohomology is generated by degree one elements, we can perform an induction to show that the deformed Floer boundary operator vanishes on the whole $H^{*}(L; \Lambda_{+})$. We refer to Section 13 in \cite{FOOO1} for the induction process and the extension from $\rho\in H^{1}(L; \Lambda_{+})$ to $\rho\in H^{1}(L; \Lambda_{0})$.
\end{proof}

\begin{remark}\label{div}
We have completed the construction of a deformed $A_{\infty}$-structure by using a chain $K$. Next we will perform computations to get some geometric applications. We list here what we expect to be true but didn't spell out in full details.
\begin{enumerate}
\item We need to construct a certain chain model of $C_{*}(L)$ to serve as the domain and codomain of our operators. We expect the construction of a singular chain model in \cite{FOOO} or a de Rham chain model in \cite{FOOO2} work here.
\item For sign conventions of our operators, we expect the calculation in Chapter 8 of \cite{FOOO} work here.
\item For transversality of moduli spaces, we expect that we can use either a single $J$ via virtual perturbation or use families of $J$ via classical means.
\item Along with the transversality of moduli spaces, we expect that our operators enjoy the divisor axioms, similar to (\ref{divisor axiom}).
\end{enumerate}
\end{remark}

\subsection{Computations for a local torus}
In last subsection we defined a version of deformed Floer theory in a general setting. Now we study the case where $L$ is a Lagrangian 3-torus near a Lagrangian 3-sphere.

Let $X$ be a symplectic 6-manifold with an integrally homologically trivial Lagrangian sphere $S$. We fix a Weinstein neighborhood $U$ of $S$ such that there is a singular toric fibration on $U$, as we described in Subsection 4.1. Topologically $U$ is isomorphic to $S^{3}\times B^{3}$ where $B^{3}$ is a 3-ball. We say $U$ is a round neighborhood if it is symplectomorphic to $D_{r}T^{*}S^{3}$ with respect to the round metric on the unit sphere $S^{3}$. The preimages of four faces in the moment polytope are four 4-chains $K_{i}=\pi^{-1}(P_{i}), i=1, 2, 3, 4$. Each of them is homeomorphic to $S^{3}\times [0, 1]$ with two boundary components. Up to orientation $\partial_{0}(K_{i})$ is the zero section $S$ and $\partial_{1}(K_{i})$ is the generator of $H_{3}(\partial U; \mathbb{Z})$. First we study some topological condition on $S$ to perform the conifold transition. Let $V$ be a small closed neighborhood containing $X-U$ such that $U\cap V$ is homeomorphic to $S^{3}\times S^{2}\times [1-\epsilon, 1]$.

\begin{lemma}
The Lagrangian sphere $S$ is homologically trivial in $X$ if and only if the inclusion
$$
j_{2}: H_{3}(U\cap V; \mathbb{Z})\rightarrow H_{3}(V; \mathbb{Z})
$$
is trivial.
\end{lemma}
\begin{proof}
Note that $U\cap V$ is homeomorphic to $S^{3}\times S^{2}\times [1-\epsilon, 1]$. Consider the Mayer-Vietoris sequence
$$
\cdots \rightarrow H_{3}(U\cap V)\rightarrow H_{3}(U)\oplus H_{3}(V)\rightarrow H_{3}(X)\rightarrow H_{2}(U\cap V)\rightarrow \cdots.
$$
And we write $j_{1}, j_{2}, i_{1}, i_{2}$ as the inclusion maps
$$
j_{1}: H_{3}(U\cap V)\rightarrow H_{3}(U), j_{2}: H_{3}(U\cap V)\rightarrow H_{3}(V), i_{1}: H_{3}(U)\rightarrow H_{3}(X), i_{2}: H_{3}(V)\rightarrow H_{3}(X).
$$
Then we have that $j_{1}$ is an isomorphism hence $H_{4}(U, U\cap V)=\lbrace 0\rbrace$. Note that $H_{4}(X, V)=H_{4}(U, U\cap V)=\lbrace 0\rbrace$ so $i_{2}$ is an injection. By the exactness of the Mayer-Vietoris sequence, we get that $i_{1}\circ j_{1}-i_{2}\circ j_{2}=0$. In conclusion $i_{1}$ is zero if and only if $j_{2}=0$.
\end{proof}

By the above lemma all the four $K_{i}$'s can be completed into four 4-chains in $X$. In other words the boundary $\partial_{1}(K_{i})$ can be capped in $X-U$. From now on we only consider those ``completed'' chains and still write them as $K_{i}$. So $K_{i}$'s are 4-chains in $X$ such that $\partial(K_{i})=\pm S$ and $K_{i}\cap U$ is the preimage of $P_{i}$. We remark that there may be different choices of $K$. For example any chain $K+A$ for $A\in H_{4}(X)$ is another choice of a chain with boundary $S$. Now we fix a ``completion'' for each $K_{i}$ and regard them as 4-chains in $X$.

Now for this fixed Weinstein neighborhood $U$ and a local torus $L\subset U$, we estimate some energy parameters of holomorphic disks. Let $J$ be a compatible almost complex structure on $X$ which satisfies Condition \ref{condition+}. Then the one-pointed open Gromov-Witten invariant $n_{\beta}$ is defined with respect to $J$, for a disk class $\beta\in\pi_{2}(X, L)$ with Maslov index two and with energy less than $E_{+}$. We consider the sequence
$$
\lbrace \beta_{k}\mid n_{\beta}\neq 0, E(\beta_{k})\leq E(\beta_{k+1})\rbrace_{k=1}^{\infty}
$$
of disk classes with Maslov index two, enumerated by their symplectic energy. We know that $L$ bounds four $J$-holomorphic disks with Maslov index two inside $U$, with same energy $E_{1}$. Those are the first four elements in the above sequence if $L$ is near $S$. In particular we can assume that $E_{1}<<E_{S}$. Let $E_{5}=E(\beta_{5})$ be the least energy of outside disk contribution. Moreover, we assume that $2E_{5}<E_{+}$.

In order to directly compute this deformed potential function, we need to use the special properties of $J$. We know that $L$ bounds four families of $J$-holomorphic disks in $U$, with Maslov index two, representing four classes $\beta_{1}, \beta_{2}, \beta_{3}, \beta_{4}$. All of this four families of disks are away from a small neighborhood of $S$, by $(6)$ in Condition \ref{condition+}. Hence the moduli space $\mathcal{M}_{1, 1}(\beta_{i}; K)$ is a closed manifold, since the interior marked point cannot go to $\partial K$. Moreover, the moduli space of cylinders $\mathcal{M}_{1}^{cy}(\beta; S)$, representing a Maslov two class $\beta$ with $\omega(\beta)<E_{5}$, does not have codimension one boundary. This is because no $J$-holomorphic disk touches $S$ if it has Maslov index two and has energy less than $E_{5}$.

Therefore, the moduli space $\mathcal{M}_{1, 1}(\beta; K+S)$ is a \textit{disjoint union} of two closed moduli spaces, see Corollary \ref{union}. By definition, the coefficient $\mathfrak{q}_{1, 0;\beta}^{cy, \mathfrak{b}}(1)$ in the potential function is the mapping degree
$$
\deg( ev: \mathcal{M}_{1, 1}(\beta; K+S)\rightarrow L) ).
$$
Hence it is the sum of two mapping degrees
\begin{equation}
\begin{aligned}
\mathfrak{q}_{1, 0;\beta}^{cy, \mathfrak{b}}(1)&= \deg( ev: \mathcal{M}_{1, 1}(\beta; K+S)\rightarrow L) )\\
&= \deg( ev: \mathcal{M}_{1, 1}(\beta; K)\rightarrow L) )+ \deg( ev: \mathcal{M}_{1, 1}^{cy}(\beta; S)\rightarrow L) ).
\end{aligned}
\end{equation}
We write
$$
n_{\beta}(K):= \deg( ev: \mathcal{M}_{1, 1}(\beta; K)\rightarrow L) )
$$
and
$$
n_{\beta}^{cy}:= \deg( ev: \mathcal{M}_{1, 1}^{cy}(\beta; S)\rightarrow L) ).
$$
By local study, the first mapping degree is known to be one if and only if $\beta=\beta_{i}$ and $K$ is a capped $K_{i}$.

Now we can compute the deformed potential function. By the degree computation it is enough to only consider $\beta$ with Maslov index two. Let $\mathfrak{b}=wK_{1}$, where $K_{1}$ is the capped first facet in the singular toric fibration. With respect to a chosen basis of $H^{1}(L; \mathbb{Z})$ (the same basis as in (4.1)), the potential function is
\begin{equation}\label{po}
\begin{aligned}
&\mathfrak{PO}^{cy, \mathfrak{b}}(\rho)\\
=&[(1+(1+n_{\beta_{1}}^{cy})w)x+ (1+n_{\beta_{2}}^{cy}w)y^{-1}+ (1+n_{\beta_{3}}^{cy}w)xz^{-1}+ (1+n_{\beta_{4}}^{cy}w)y^{-1}z]T^{E_{1}}+\\
+& \sum_{\mu(\beta)=2, \omega(\beta)=E_{1}, \beta\neq\beta_{i}}n_{\beta}^{cy}f_{\beta}(x,y,z)T^{E_{1}}+ H(w, x, y, z, T)
\end{aligned}
\end{equation}
where $H(w, x, y, z, T) $ are higher energy terms.

We explain (\ref{po}) as follows. There is no term with energy less than $E_{1}$, since such a low energy disk or cylinder will be contained in $U$, where $L$ is monotone and $E_{1}$ is the minimal energy. The term $(1+(1+n_{\beta_{1}}^{cy})w)x$ corresponds to the class $\beta_{1}$. Its coefficient is
$$
\mathfrak{m}_{0; \beta_{1}}(1)+ \mathfrak{q}_{1, 0;\beta_{1}}^{cy, \mathfrak{b}}(1)= \mathfrak{m}_{0; \beta_{1}}(1)+ (n_{\beta_{1}}(K_{1})+ n_{\beta_{1}}^{cy})w = 1+(1+n_{\beta_{1}}^{cy})w,
$$
weighted by the boundary class $e^{\rho(\partial\beta_{1})}=x$ in our chosen basis. Similarly, for $\beta_{i}$ with $i=2,3,4$ we have other three terms, since holomorphic disks representing $\beta_{i}$ with $i=2,3,4$ do not intersect $K_{1}$, we have $\mathfrak{m}_{0; \beta_{i}}(1)=1, n_{\beta_{1}}(K_{1})=0$ but $n_{\beta_{i}}^{cy}$ might not be zero. This gives the coefficient $1+n_{\beta_{i}}^{cy}w$ in these three terms.

And the term
$$
\sum_{\mu(\beta)=2, \omega(\beta)=E_{1}, \beta\neq\beta_{i}}n_{\beta}^{cy}f_{\beta}(x,y,z)T^{E_{1}}
$$
corresponds to other disk classes with $\mu(\beta)=2, \omega(\beta)=E_{1}, \beta\neq\beta_{i}$. By the local study we know that $\mathfrak{m}_{0; \beta}(1)=n_{\beta}(K_{1})=0$. However, there might be a nonzero cylinder contribution $n_{\beta}^{cy}$, which we weight by its energy $T^{E_{1}}$ and boundary class $f_{\beta}(x,y,z):=e^{\rho(\partial\beta)}$. Later we will use $(8)$ in Condition \ref{condition+} to exclude those contributions. The last term is from contributions of classes with energy more than $E_{1}$.

In the low energy terms, the disk contributions are explicitly known but the cylinder contributions $n_{\beta}^{cy}$ are not. Next we show that for a degenerate almost complex structure $J_{\infty}$, there are \textit{no} cylinder contributions. However, we do not know while we degenerating the almost complex structure, whether other assumptions in Condition \ref{condition} will be preserved. So when we use the vanishing of some cylinder contributions, we regard it as an extra condition.

\begin{proposition}
With respect to some almost complex structure $J_{\infty}$, the moduli space $\mathcal{M}_{1, 1}^{cy}(\beta, S)$ is empty. Here $\beta$ is any class with Maslov index two.
\end{proposition}
\begin{proof}
The proof uses the degeneration technique in Section 3. First we fix a Weinstein neighborhood $U$ of $S$ and $L$ is in $U$. Then we chose a smaller Weinstein neighborhood $U'\subset U$ of $S$ such that $L$ is not in $U'$. That is, $U'$ is symplectomorphic to $D_{r'}T^{*}S^{3}$ with the canonical symplectic form with a smaller $r'$. Then by the symplectic sum construction, we have a fibration $\mathcal{X}\rightarrow \Delta$ over the unit disk. The fiber $X_{z}$ is symplectomorphic to $X$ if $z \neq 0$. The fiber over zero is a singular symplectic manifold $X_{0}=X_{-}\cup_{D}X_{+}$ where $X_{-}$ is symplectomorphic to the quadric with a scaled symplectic form. The symplectic manifold $X_{+}$ is obtained from $X-U'$ by quotienting the $S^{1}$-action on $\partial U'$. Hence $L$ is in $X_{+}$.

Next we do dimension counting to show that there is no holomorphic cylinder in the fiber $X_{z}$ if $z$ is close to zero. We assume that there is a compatible almost complex structure $J$ on $\mathcal{X}$ such that
\begin{equation}\label{degenerate}
\textit{the torus $L$ satisfies Condition \ref{condition} in $X_{+}$ with respect to $J\mid_{X_{+}}$.}
\end{equation}
For a class $\beta$, suppose that we have a sequence $u_{z}$ of $J_{z}$-holomorphic cylinders in $X_{z}$ representing class $\beta$, with $z\rightarrow 0$. By Gromov compactness theorem, we have a limit $u_{0}$ in $X_{0}$. Suppose that for the nodal curve $u_{0}$, its component in the quadric, which is not contained in $D$ is of class $A$. Since one end of our cylinder is on $S$, the class $A$ is non-zero. The component of $u_{0}$ in $X_{+}$ has two sub-components: one is totally contained in $D$ and the other not in $D$. We write the class of the former as $B\in H_{2}(D)\subset H_{2}(X_{+})$ and the later as class $\beta'\in H_{2}(X_{+}, L)$. We assume that both components $A$ and $\beta'$ intersect $D$ at finitely many points. If $A\neq 0$ we write $s_{1}$ as the intersection number of $A$ and the divisor $D=Q_{2}\subset Q_{3}$. Note that $D\cong \mathbb{C}P^{1}\times \mathbb{C}P^{1}$ is both a complex and symplectic submanifold of $X_{+}$, it is also monotone with respect to the induced symplectic form. For each ruling $C=\mathbb{C}P^{1}$ in $D$, its intersection number (computed in $X_{+}$) with $D$ is $-1$, since the normal bundle of $D$ in $X_{+}$ is $\mathcal{O}(-1, -1)$. Hence $c_{1}(N_{X_{+}}C)(C)=-1$ and $c_{1}(TX_{+})(C)=1$. Then we write the intersection number $B\cdot D= s_{11}$, which is a negative number. And we write $\beta'\cdot D=s_{12}$. By above discussion, we know that $s_{11}+ s_{12}= s_{1}$ and $c_{1}(TX_{+})(B)=-s_{11}$ since $B$ is a positive linear combination of two rulings.

Now we use the Maslov index formula, which can be deduced from (\ref{index})
$$
\begin{aligned}
\mu(\beta)&= \mu(A)+ \mu(B)+ \mu(\beta')- 4s_{1}= 6s_{1} -2s_{11} -4s_{1} +\mu(\beta')\\
&\geq 2s_{1} -2s_{11} +2s_{12}= 4s_{12}\geq 4.
\end{aligned}
$$
The second equality uses that $c_{1}(TQ_{3})=3[Q_{2}]$ and the last inequality uses assumption (\ref{degenerate}) so that $\mu(\beta')\geq 2s_{12}$.

Therefore if the class $\beta$ has Maslov index two then the nodal curve cannot have a component in the top level $Q_{3}$. That is, for a Maslov index two class $\beta$, there is no $J_{z}$-holomorphic cylinder representing $\beta$ with respect to some $J_{z}$ when $z$ is small. Hence we write $J_{\infty}$ as one of $J_{z}$ for small $z$.
\end{proof}

\begin{remark}
A similar proof, replacing a sequence of holomorphic cylinders by a sequence $u_{z}$ of $J_{z}$-holomorphic disks in $X_{z}$ with boundary on $L$ representing class $\beta$, shows that there is no $J_{z}$-holomorphic disk going to the neighborhood $U'$, representing a Maslov two $\beta$ with respect to some $J_{z}$ when $z$ is small. Hence the assumption (\ref{degenerate}) serves as an alternative of Condition \ref{condition+} for our later computations. We expect (\ref{degenerate}) is true at least locally. For example, when $X=T^{*}S^{3}$ if we choose $U'$ in the above degeneration carefully, our fixed $L$ is still monotone in $X_{+}$. So we can first do local computation to exclude cylinder contributions with respect to some $J_{\infty}$, then try to extend $J_{\infty}$ from a Weinstein neighborhood to the whole of $X$.

One may also use neck-stretching in symplectic field theory to prove above results. Note that there is a contact form on the boundary of $U'$ such that the Conley-Zehnder indices of its Reeb orbits are at least two. Starting with a disk or cylinder with Maslov index two, after stretching along $\partial U'$, in the bottom level we have a curve with several negative punctures and with Maslov index at most two. The dimension of this moduli space is smaller than two, hence the boundary of such a curve does not pass through a generic point on $L$.
\end{remark}

In the next section we will relate the cohomology $HF_{cy}(L; (\mathfrak{b}, b))$ to another model of cohomology such that the underlying complex is generated by Hamiltonian chords with ends on $L$ and a Hamiltonian perturbation $\phi(L)$. The first cohomology $HF_{cy}(L; (\mathfrak{b}, b))$ is for computational purpose and the later cohomology is more geometrical. Once we established the equivalence between these two theories we get a critical points theory to detect the displacement energy of $L$.

\section{A second deformed Floer complex}
Now we will construct another deformed Floer complex and study its change of filtration under Hamiltonian diffeomorphisms. As in the setting of Theorem \ref{construction}, let $X$ be a symplectic 6-manifold, $S$ be an integrally homologically trivial Lagrangian sphere and $L$ be a Lagrangian submanifold of $X$ satisfying Condition \ref{condition}. We also fix a 4-chain $K$ in $X$ such that $\partial K=S$ and $L\cap K=\emptyset$, assuming its existence.

In the following construction we need to introduce Hamiltonian perturbations of $L$ and perturbations of the chosen almost complex structure $J$. We first remark that by Gromov compactness theorem, see Lemma 6.4.7 and Lemma 6.4.8 in \cite{MS}, both Condition \ref{condition} and Condition \ref{condition+} are open, except item $(2)$. However there can be more than one connected components of the set of almost complex structures satisfying each condition. So when we say a small perturbation of $J$, we mean another almost complex structure in the same component with $J$, which satisfies the conditions. When we say a small Hamiltonian perturbation $\phi$ of $L$, we mean the Hamiltonian is small such that $\phi(L)$ also satisfies these conditions with respect to another $J'$ which is in the same component with $J$ and satisfies the conditions. Then we can connect $J$ with $J'$ by a family of almost complex structures in this component. And when we use domain-dependent almost complex structures $\lbrace J_{z}\rbrace_{z\in \Sigma}$, we assume that all $J_{z}$ are in the same component. As we mentioned in the beginning of this article, we expect the transversality result needed can be obtained by either a single $J$ via virtual perturbation or by families of $J$'s via classical means.

\subsection{Definition of the complex}
Let $H_{t}$ be a time-dependent Hamiltonian function on $X$ and let $\phi$ be its time-one Hamiltonian diffeomorphism. We first review the Floer complex generated by the Hamiltonian chords with ends on $L$, which is called the \textit{dynamical} version of Floer theory in \cite{FOOO4}.

Consider the path space
$$
\Omega(L)=\lbrace l:[0, 1]\rightarrow X\mid l(0)\in L, l(1)\in L\rbrace.
$$
We fix a base path $l_{a}\in \Omega(L)$ for each component $a\in \pi_{0}(\Omega(L))$. Let $(l, w)$ be a pair such that $l\in \Omega(L)$ and $w: [0, 1]^{2}\rightarrow X$ satisfying
$$
w(s, 0)\in L, w(s, 1)\in L, w(0, t)=l_{a}(t), w(1, t)=l(t).
$$
And we modulo the following $\Gamma$-equivalence. Let $(l,w), (l,w')$ be two such pairs, the concatenation
$$
\bar{w}\# w': [0,1]\times [0,1]\rightarrow X
$$
defines a loop $c:S^{1}\rightarrow \Omega(L)$. One may regard this loop as a map $C:S^{1}\times [0,1]\rightarrow X$ satisfying the boundary condition $C(s,0)\in L, C(s,1)\in L$. Next we consider the symplectic area $I_{\omega}(c):=\int_{C}\omega$ and the Maslov index $I_{\mu}(c):=\mu(C)$. We say two pairs $(l,w), (l,w')$ are $\Gamma$-equivalent if $I_{\omega}(\bar{w}\# w')=0=I_{\mu}(\bar{w}\# w')$. We denote the set of equivalence classes $[l,w]$ by $\tilde{\Omega}(L)$, which is called the Novikov covering space.

Then we define the dynamical action functional, with respect to $H_{t}$, to be
\begin{equation}
\mathcal{A}_{H_{t}, l_{a}}([l, w])=\int w^{*}\omega +\int_{0}^{1} H_{t}(l(t))dt.
\end{equation}
on the space of pairs $[l, w]$. We remark that our convention for Hamilton's equation is
$$
\omega(X_{H}, \cdot)= dH(\cdot ), \quad \textit{where $X_{H}$ is the Hamiltonian vector field of $H$}.
$$
The critical points of this action functional are Hamiltonian chords. We write the set of critical points as
$$
CF(L, H_{t})=\lbrace [l,w]\mid l'(t)=X_{H_{t}}(l(t))\rbrace.
$$
For a critical point $[l,w]$ the path $l$ corresponds to a geometric intersection point in $L\cap \phi(L)$ since $\phi(l(0))=l(1)\in L$. When $H_{t}$ is generic there are only finitely many of them, and they do not intersect the 4-chain $K$. We remark that the set of critical points has a decomposition with respect to the different components $a\in \pi_{0}(\Omega(L))$. We define the action functionals and study their critical points on different components separately.

Now we equip $L$ with local systems. For any group homomorphism
$$
\rho: \pi_{1}(L)\rightarrow \Lambda_{0}-\Lambda_{+}
$$
we choose a flat $\Lambda_{0}$-bundle $(\mathcal{L}, \nabla_{\rho})$ such that its holonomy representation is $\rho$. Then we define the cochain complex as
\begin{equation}
\begin{aligned}
&\widetilde{CF}((L, \rho), H_{t}; \Lambda_{0}):= \bigoplus_{[l,w]\in CF(L, H_{t})} \hom (\mathcal{L}_{l(0)}, \mathcal{L}_{l(1)})\cdot [l,w]\\
& CF((L, \rho), H_{t}; \Lambda_{0}):= \widetilde{CF}((L, \rho), H_{t}; \Lambda_{0})/\sim.
\end{aligned}
\end{equation}
Here $\mathcal{L}_{l(i)}$ is the fiber of the bundle $\mathcal{L}$ over $l(i)$ and $\hom (\mathcal{L}_{l(0)}, \mathcal{L}_{l(1)})$ is defined as
$$
\hom(\mathcal{L}_{l(0)}, \mathcal{L}_{l(1)}):= \hom(\phi_{*}(\mathcal{L}_{l(0)}), \mathcal{L}_{l(1)}),
$$
which is a homomorphism induced by $\phi$. And the notion $\hom (\mathcal{L}_{l(0)}, \mathcal{L}_{l(1)})\cdot [l,w]$ means the free $\Lambda_{0}$-module generated by $[l,w]$. The equivalence $\sim$ here is that for $T^{A}\in \Lambda_{0}$, we define $[l,w]\sim T^{A} [l',w']$ if and only if
$$
l=l', \quad \int_{w}\omega = A +\int_{w'}\omega.
$$
Then we extend the above action functional to $CF((L, \rho), H_{t}; \Lambda_{0})$ as follows
$$
\mathcal{A}_{H_{t}, l_{a}}(\sigma[l, w])= v(\sigma)+ \int w^{*}\omega +\int_{0}^{1} H_{t}(l(t))dt.
$$

Next we consider smooth maps
$$
u(\tau, t): \mathbb{R}\times [0,1]\rightarrow X, \quad u(\tau, 0)\in L, \quad u(\tau, 1)\in L
$$
such that $u(-\infty, t)=l_{0}(t), u(\infty, t)=l_{1}(t)$ for some $l_{0}, l_{1}$ to define the parallel transport maps. Let $B$ be the homotopy class of $u$ and $\sigma\in \hom (\mathcal{L}_{l_{0}(0)}, \mathcal{L}_{l_{0}(1)})$ then we define
$$
Comp_{B}: \hom (\mathcal{L}_{l_{0}(0)}, \mathcal{L}_{l_{0}(1)})\rightarrow \hom (\mathcal{L}_{l_{1}(0)}, \mathcal{L}_{l_{1}(1)})
$$
by
\begin{equation}
Comp_{B}(\sigma)=Pal_{1}\circ \sigma \circ Pal_{0}^{-1}
\end{equation}
where $Pal_{i}$ is the parallel transport along the path $u(\tau, i)\in L$ for $i=0, 1$, see Figure 5. Although we used $u$ to define the parallel transport, the composition map is a homotopy invariant. That is, it only depends on the homotopy class $B$ of $u$.

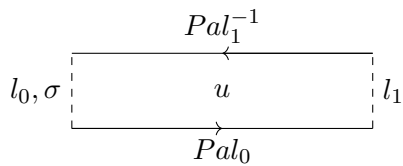
\begin{figure}
  \begin{tikzpicture}
  \draw (-2,1) to (2,1);
  \draw [->] (-2,1) to (0,1);
  \draw (-2,0) to (2,0);
  \draw [->] (2,0) to (0,0);
  \draw [dashed] (-2,0) to (-2,1);
  \draw [dashed] (2,0) to (2,1);
  \node [left] at (-2,0.5) {$l_{0}, \sigma$};
  \node [right] at (2,0.5) {$l_{1}$};
  \node [above] at (0,1) {$Pal_{1}$};
  \node [below] at (0,0) {$Pal_{0}^{-1}$};
  \node at (0,0.5) {$u$};
  \end{tikzpicture}
  \caption{Composition of parallel transport maps.}
\end{figure}

Now we can define the Floer coboundary operator with local systems. Let
$$
\begin{aligned}
\mathcal{M}([l_{0},w_{0}], [l_{1},w_{1}])= &\lbrace u(\tau, t): \mathbb{R}\times [0,1]\rightarrow X \mid \partial_{\tau}u+ J(\partial_{t}u- X_{H_{t}})=0,\\
& u(\tau, 0)\in L, u(\tau, 1)\in L, u(-\infty, t)=l_{0}(t), u(\infty, t)=l_{1}(t)\rbrace
\end{aligned}
$$
be the moduli space of Floer solutions connecting $[l_{0},w_{0}]$ and $[l_{1},w_{1}]$. Then for a fixed $\rho$ we define
$$
\delta^{\rho}: CF((L, \rho), H_{t}; \Lambda_{0})\rightarrow CF((L, \rho), H_{t}; \Lambda_{0})
$$
as
\begin{equation}
\begin{aligned}
& \delta^{\rho}(\sigma [l_{0},w_{0}])\\
=& \sum_{[l_{1},w_{1}]} Comp_{[w_{1}-w_{0}]}(\sigma) \sharp \mathcal{M}([l_{0},w_{0}], [l_{1},w_{1}])[l_{1},w_{1}].
\end{aligned}
\end{equation}
Here the sum is over all $[l_{1},w_{1}]$ such that the corresponding moduli space is zero-dimensional. And the number $ \sharp \mathcal{M}([l_{0},w_{0}], [l_{1},w_{1}])$ is a signed count.

\begin{proposition}
Under Condition \ref{condition}, the coboundary operator is well-defined and satisfies that $(\delta^{\rho})^{2}\equiv 0 \mod T^{E_{+}}$.
\end{proposition}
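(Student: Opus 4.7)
The plan is to adapt the standard boundary-of-boundary argument for Floer cohomology of monotone Lagrangians to the present Hamiltonian-chord setup, using Condition 1.1 to rule out problematic bubbling and Lemma 5.1 to keep track of the local system. There are two tasks: showing that $\delta^{\rho}$ is well-defined, and verifying $(\delta^{\rho})^{2}=0$.

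For well-definedness, I would choose $(H_{t}, J)$ generically so that all Hamiltonian chords with ends on $L$ are non-degenerate and the index-zero moduli spaces $\mathcal{M}([l_{0},w_{0}], [l_{1},w_{1}])$ are cut out transversally. By standard Fredholm theory these are finite oriented $0$-manifolds, so the signed counts $\sharp\mathcal{M}([l_{0},w_{0}], [l_{1},w_{1}])$ are well-defined. Convergence of the (a priori infinite) sum in (5.4) in the non-Archimedean Novikov topology follows from the Floer energy identity $E(u)=\omega([w_{0}-w_{1}])+\int_{0}^{1}H_{t}(l_{0})\,dt-\int_{0}^{1}H_{t}(l_{1})\,dt$ together with Gromov compactness: for every energy bound, only finitely many $[l_{1},w_{1}]$ carry contributing strips.

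To prove $(\delta^{\rho})^{2}=0$, I would enumerate the codimension-one boundary strata of the compactified one-dimensional moduli spaces $\overline{\mathcal{M}}([l_{0},w_{0}], [l_{2},w_{2}])$. Generically there are three types of limits: (i) strip-breaking at an intermediate chord $[l_{1},w_{1}]$, (ii) a Maslov-two disk bubble sprouting at an interior boundary point of either the top or the bottom edge of the strip, and (iii) holomorphic sphere bubbles. Condition 1.1(3) forces (iii) to be of codimension at least two, so it does not contribute. Condition 1.1(1)(2) ensures that the only disk bubbles that appear are Maslov-two and are regular. Stratum (i) assembles into $\delta^{\rho}\circ\delta^{\rho}$ applied to $\sigma\otimes [l_{0},w_{0}]$, where the multiplicativity of the parallel transport under concatenation is precisely the content of Lemma 5.1 applied to the homotopy classes $w_{0}-w_{1}$ and $w_{1}-w_{2}$.

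For stratum (ii), a Maslov-two bubble of class $\beta$ attached to the top (resp.\ bottom) boundary shifts the capping $w_{0}\leadsto w_{0}\#\beta$ (resp.\ $w_{2}$), so by the divisor axiom each such contribution is weighted by $n_{\beta}\cdot e^{\rho(\partial\beta)}T^{\omega(\beta)}$, and summing over all Maslov-two classes reproduces the value $\mathfrak{PO}(\rho)$ of the potential on the unperturbed $L$. A standard orientation analysis, following the conventions of Chapter 8 of \cite{FOOO}, shows that the top and bottom contributions appear with opposite signs and therefore cancel: $\mathfrak{PO}(\rho)\cdot\sigma-\mathfrak{PO}(\rho)\cdot\sigma=0$. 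Since the signed count of the boundary of a compact oriented one-manifold vanishes, the remaining stratum (i) must itself sum to zero, giving $(\delta^{\rho})^{2}=0$. The main technical obstacle will be this coherent orientation check that forces the top and bottom disk-bubble contributions to carry opposite signs; this is what makes the cancellation automatic for the Hamiltonian-chord model and crucially uses that both Lagrangian boundaries are $L$ equipped with the same local system $\rho$.
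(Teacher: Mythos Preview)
Your proposal is correct and follows essentially the same approach as the paper: the paper's proof is a brief sketch that cites the monotone case (Theorem 16.4.10 in \cite{O2}), notes that Condition 1.1 excludes non-positive Maslov disk bubbles, and states that Maslov-two bubbles on the two copies of $L$ cancel in pairs. Your write-up simply fleshes out the same argument, making explicit the role of the potential value $\mathfrak{PO}(\rho)$ in the top/bottom cancellation and the use of Lemma 5.1 for the local-system bookkeeping.
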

\begin{proof}
Under the energy bound $E_{+}$, the proof is similar to the case when a Lagrangian torus is monotone, where the self-Floer cohomology is well-defined, see Theorem 16.4.10 in \cite{O2}. Note that Condition \ref{condition} excludes possible disk bubbles, with non-positive Maslov indices, splitting from the Floer strips. For disk bubbles with Maslov index two, they appear in pairs on both sides of the Floer solutions and cancel with each other. We don't need to consider disk bubbles with higher Maslov indices since we are looking at one-dimensional moduli spaces to show the square of $\delta^{\rho}$ is zero. And so far we haven't introduce interior marked points, we don't need to use the energy bound $E_{S}$ in Condition \ref{condition}.
\end{proof}

We call the above cohomology given by $\delta^{\rho}$ the Floer cohomology with local systems. Next we want to deform it further by counting strips with an interior marked point/an interior hole. The aim is to define a new operator
$$
\partial_{K}: CF((L, \rho), H_{t}; \Lambda_{0})\rightarrow CF((L, \rho), H_{t}; \Lambda_{0}).
$$
Here $K$ is the chosen 4-chain with $\partial K=S$ and $K\cap L=\emptyset$. First we describe the domain we will use to count holomorphic maps. Consider the domain
$$
Strip_{\epsilon, r=(r', r'')}=\lbrace (\tau, t)\in \mathbb{R}\times [0, 1]\subset \mathbb{C}\mid (\tau -r')^{2}+ (t- r'')^{2}\geq \epsilon ^{2}\rbrace.
$$
Let $C(\epsilon)$ denote the circle boundary $(\tau -r')^{2}+ (t- r'')^{2}= \epsilon ^{2}$ of $Strip_{\epsilon, r}$. We put the interior hole centered at $(r', r'')$ with radius $\epsilon\in (0, \min\lbrace r'', 1-r''\rbrace)$. The radius $\epsilon$ determines the complex structure on the domain. And we write $Strip=Strip_{0, r}$ as the usual holomorphic strip in $\mathbb{C}$.

Now we consider several moduli spaces. The elements in these moduli spaces satisfy the same Hamiltonian-perturbed holomorphic equation
$$
\partial_{\tau}u+ J(\partial_{t}u- X_{H_{t}})=0.
$$
But they are from different domains and have different boundary conditions.

For a pair $([l_{0},w_{0}], [l_{1},w_{1}])$ let
$$
\widetilde{\mathcal{M}}_{1}(([l_{0},w_{0}], [l_{1},w_{1}]); K)
$$
be the moduli space of Floer strips with one interior marked point at $(r', r'')$, where the interior point is mapped to $K$. More precisely, it contains maps $u: Strip \rightarrow X$ such that
$$
u(\tau, 0)\in L, \quad u(\tau, 1)\in L, \quad u(-\infty, t)=l_{0}, \quad u(\infty, t)=l_{1}
$$
and
$$
u(r', r'')\in K
$$
where the map $u$ represents the class $\beta=w_{1}-w_{0}$. And let
$$
\widetilde{\mathcal{M}}_{1}^{cy}(([l_{0},w_{0}], [l_{1},w_{1}]); S)
$$
be the moduli space of Floer strips with one interior hole, where the hole is mapped to $S$. It contains maps from domain $Strip_{\epsilon, r}$ for all $(\epsilon, r)$. And $u$ satisfies the same Lagrangian boundary condition as above: the line boundaries are mapped to $L$ and two ends converge to given chords $l_{0}, l_{1}$. One extra boundary condition is that the circle boundary is mapped to $S$.

Note that $\partial K=S$ and $S$ is simply connected, the homotopy classes in these two types of moduli spaces can be identified. Now we fix a bulk deformation $\mathfrak{b}=wK$ with $w\in\Lambda_{+}$. Similar to the discussion in Section 4 we want to compactify these moduli spaces and glue them together along a common boundary for the same class $\beta=w_{1}-w_{0}$, with $\omega(\beta)+v(w)< E:=\min\lbrace E_{S}+ v(w), 2v(w), E_{+}\rbrace$.

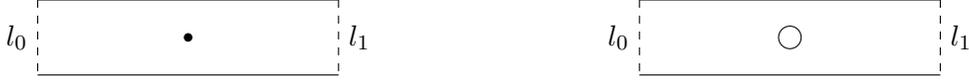
\begin{figure}
  \begin{tikzpicture}
  \draw (-6,1) to  (-2,1);
  \draw (-6,0) to  (-2,0);
  \draw [dashed] (-6,1) to (-6,0);
  \draw [dashed] (-2,1) to (-2,0);
  \draw [fill] (-4,0.5) circle [radius=0.05];
  \node [left] at (-6,0.5) {$l_{0}$};
  \node [right] at (-2,0.5) {$l_{1}$};
  \draw (2,1) to  (6,1);
  \draw (2,0) to  (6,0);
  \draw [dashed] (6,1) to (6,0);
  \draw [dashed] (2,1) to (2,0);
  \draw (4,0.5) circle [radius=0.15];
  \node [left] at (2,0.5) {$l_{0}$};
  \node [right] at (6,0.5) {$l_{1}$};
  \end{tikzpicture}
  \caption{Counting strips with one interior marked point and one hole.}
\end{figure}

\begin{proposition}\label{boundary}
For fixed generators $[l_{0},w_{0}]$ and $[l_{1},w_{1}]$, there are compactification
$$
\mathcal{M}_{1}(([l_{0},w_{0}], [l_{1},w_{1}]); K)\supseteq \widetilde{\mathcal{M}}_{1}(([l_{0},w_{0}], [l_{1},w_{1}]); K)
$$
and compactification
$$
\mathcal{M}_{1}^{cy}(([l_{0},w_{0}], [l_{1},w_{1}]); S)\supseteq \widetilde{\mathcal{M}}_{1}^{cy}(([l_{0},w_{0}], [l_{1},w_{1}]); S).
$$
Each of them has a particular boundary component such that
$$
\partial_{K} \mathcal{M}_{1}(([l_{0},w_{0}], [l_{1},w_{1}]); K)= -\partial_{cy} \mathcal{M}_{1}^{cy}(([l_{0},w_{0}], [l_{1},w_{1}]); S)
$$
and we can glue them on this component to get a compact moduli space
$$
\begin{aligned}
&\mathcal{M}_{1}(([l_{0},w_{0}], [l_{1},w_{1}]); K+S))=\\
&\mathcal{M}_{1}(([l_{0},w_{0}], [l_{1},w_{1}]); K)\sqcup\mathcal{M}_{1}^{cy}(([l_{0},w_{0}], [l_{1},w_{1}]); S)\big/ \sim
\end{aligned}
$$
where the equivalence relation is
$$
\partial_{K} \mathcal{M}_{1}(([l_{0},w_{0}], [l_{1},w_{1}]); K)\sim -\partial_{cy} \mathcal{M}_{1}^{cy}(([l_{0},w_{0}], [l_{1},w_{1}]); S).
$$
\end{proposition}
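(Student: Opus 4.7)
The plan is to adapt the cylinder-to-disk gluing construction of Theorem 4.9 and Corollary 4.10 to the Hamiltonian-perturbed setting. The two relevant domains differ only by whether the point $(0,r)\in Strip$ is a genuine interior marked point or the center of a circular hole of radius $\epsilon$; the Floer-type equation $\partial_\tau u+J(\partial_t u-X_{H_t})=0$, the line-boundary conditions on $L$, and the asymptotic conditions $l_0,l_1$ are identical. Since $\partial K=S$, Stokes-type behavior forces the codimension-one face of $\mathcal{M}_1(\ldots;K)$ where the marked point drifts onto $\partial K$ to coincide (after orientation reversal) with the $\epsilon\to 0$ face of $\mathcal{M}_1^{cy}(\ldots;S)$; gluing along this face is the content of the proposition.

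First I will compactify $\widetilde{\mathcal{M}}_1(([l_0,w_0],[l_1,w_1]);K)$ by the standard Gromov-Floer procedure: allow strip breaking at $\pm\infty$ (producing lower strata of the usual Floer boundary type, which I do not glue here), allow disk bubbling on either line boundary of the strip, allow sphere bubbling, and allow the interior marked point to approach $\partial K=S$. Only the last kind of face is what we call $\partial_K\mathcal{M}_1(\ldots;K)$; it is precisely the moduli space of perturbed holomorphic strips with one interior marked point whose image lies on $S$. Next I will compactify $\widetilde{\mathcal{M}}_1^{cy}(([l_0,w_0],[l_1,w_1]);S)$ by allowing $\epsilon\to 0$ (the circle collapses to an interior point on $S$, since $S$ is closed), $\epsilon\to\min(r,1-r)$ (the circle meets a line boundary), strip breaking, disk bubbling, and sphere bubbling. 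The $\epsilon\to 0$ face is obtained by the same four-step $S^1$-equivariant interior gluing argument as in Theorem 4.9: smooth the node in a one-parameter family, apply the implicit function theorem, forget the auxiliary marked point, and quotient by the rotation action on the collapsing disk. This identifies $-\partial_{cy}\mathcal{M}_1^{cy}(\ldots;S)$ with $\partial_K\mathcal{M}_1(\ldots;K)$ as oriented manifolds.

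The remaining codimension-one strata that do not appear in the statement must be either cancelled in pairs or excluded. Sphere bubbles are codimension two and can be made to not appear for generic $J$. Disk bubbles on $L$ with Maslov index two come in pairs by the standard monotone argument, using Condition 1.1, and do not affect the boundary operator. The degeneration $\epsilon\to\min(r,1-r)$ produces a strip-bubble between the circle and the line boundary connecting intersection points of $S$ and $L$; since $L\subset U$ is disjoint from $S$, this moduli space is empty. The $p$-to-boundary style degenerations of Section 4.5 translate here to the circle hitting the line, where Condition 1.1 together with the energy bound $\omega(\beta)<E=E_5-E_1$ and Proposition 4.6 rules out disks with interior marked point on $S$ of Maslov index $\leq 2$; hence any such limiting configuration would carry energy at least $E$, contradicting the assumption. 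Strip breaking at $\pm\infty$ gives the genuine boundary of the glued moduli space, which is not part of the cylinder-to-strip face.

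The main obstacle, as in Section 4, is checking orientation compatibility: the relative spin structure on $L$ together with the capping data $w_0,w_1$ determine orientations on both pieces, and one must verify the sign $-1$ in $\partial_K\mathcal{M}_1(\ldots;K)\sim-\partial_{cy}\mathcal{M}_1^{cy}(\ldots;S)$. I will deduce this from the local model: the gluing parameter $\sigma\in[0,\sigma_0)$ on the cylinder side corresponds to $-\sigma$ on the marked-point side because the inward normal to $\partial K$ and the outward normal to the collapsing circle point in opposite directions under the identification of neighborhoods. A secondary technical issue is the construction of compatible Kuranishi/virtual fundamental chains for the two pieces so that the gluing is performed coherently along the whole face; this is a direct transcription of the machinery used in Theorem 4.11 combined with the parametrized gluing analysis in Subsection 7.4.1 of \cite{FOOO}, with the Hamiltonian term $X_{H_t}$ playing no essential role since all relevant estimates are local near the collapsing circle, where $H_t$ is bounded.
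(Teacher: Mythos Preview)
Your overall strategy matches the paper's: analyze all possible limits of $(\epsilon,r)$, use the interior gluing of Theorem 4.9 to identify the $\epsilon\to 0$ face with the marked-point-on-$S$ face, and rule out the ``circle meets line'' degeneration by the fact that $L\cap S=\emptyset$ forces any limiting finite-energy strip between $L$ and $S$ to converge to nonexistent intersection points. That part is correct and is exactly what the paper does.

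Where you go wrong is in the paragraph beginning ``The remaining codimension-one strata \ldots must be either cancelled in pairs or excluded.'' The proposition only asks for a \emph{compact} glued space, not a closed one; the other strata (strip breaking, disk bubbles on $L$, the annulus bubble in the case $\epsilon_i/(1-r_i)\to R>0$, and the disk-with-point-on-$S$ bubble in the case $\epsilon_i/(1-r_i)\to 0$) are simply added to the compactification here and remain as genuine boundary or corner components of $\mathcal{M}_1(\ldots;K+S)$. Their cancellation is the content of the \emph{next} proposition (Proposition 5.4), not this one. In particular, your appeal to Proposition 4.6 to exclude Maslov-two disks touching $S$ is not valid in the Hamiltonian-perturbed setting: that proposition is proved by SFT stretching for genuine $J$-holomorphic disks, and Remark 4.18 explicitly notes that once a Hamiltonian perturbation is present such disks \emph{can} touch $S$. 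Likewise the pair-cancellation of Maslov-two disk bubbles is an argument about $(d_K^\rho)^2$, not about compactness. So drop those exclusion claims here; just record (3b) and (3c) as additional strata in the compactification, as the paper does, and defer their analysis to the square-zero argument.
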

\begin{proof}
To get the compactification we add several types of degenerations: strip breaking, disk/sphere bubbles and domain degeneration. The cases of the strip breaking and disk/sphere bubbles are more standard in Floer theory. So we mainly care about the domain degeneration involving parameters $\epsilon$ and $r$. The former is the radius of the interior hole and the later is the position of the center of the hole. Note that $r=(r', r'')$, if $r'$ goes to positive or negative infinity, then we will have an interior marked point or an interior hole converging to a chord, which is assume to be away from the chain $K$. Hence such a degeneration does not happen. In the following we focus on the parameters $\epsilon, r''$ and just write $r=r''$ for notational simplicity. Suppose that we have a sequence of parameters $\lbrace(\epsilon_{i}, r_{i})\rbrace_{i=1}^{+\infty}$, we will discuss case by case of possible degenerations.
\begin{enumerate}
\item If $\inf_{i}\lbrace \epsilon_{i}\rbrace>0$ and $\epsilon_{i}+ r_{i}\rightarrow 1$ or $-\epsilon_{i}+ r_{i}\rightarrow 0$. Geometrically the circle boundary approaches to the strip boundary while the radius of the circle is bounded from below. This type of degeneration will not happen since our $S$ and $L$ are disjoint. Without losing generality we assume that $\epsilon_{i}+ r_{i}\rightarrow 1$ with $\epsilon_{i}\equiv \epsilon_{0}> 0$ for some constant $\epsilon_{0}$. Then we can scale a neighborhood of the point $(0, \epsilon_{i}+ r_{i})$ such that locally we have a holomorphic strip $u_{i}$ with one boundary on $L$ and with one curved boundary on $S$, see Figure 7. To compactify such a degeneration we need to add a genuine holomorphic strip $u_{\infty}$ in the moduli space, since in the limit the curved boundary becomes a usual boundary. However, note that such a strip $u_{\infty}$ has finite energy because it splits from a finite energy solution. By an exponential decay estimate we know $\lim_{\tau\rightarrow \pm\infty} u_{\infty}(\tau, t)$ converges to the intersections of $L$ and $S$, which is empty by our assumption. Hence such a degeneration will not appear.
\item If $\epsilon_{i}\rightarrow 0$ and $\lbrace r_{i}\rbrace$ stays the interior of the strip. In the limit we have a holomorphic strip with one interior marked point. Then we can perform the same gluing argument as we did in Section 4. That is, we glue this end with the moduli space of strips with one interior marked point as we did before, to cancel this end of boundary.
\item If $\epsilon_{i}\rightarrow 0$ and $\lbrace r_{i}\rbrace$ goes to one strip boundary. Without losing generality we assume that $\lim_{i}(r_{i})=1$. Then we consider the ratio $\frac{\epsilon_{i}}{1-r_{i}}$ and there are different possibilities.
\begin{enumerate}
\item If $\lim_{i}\frac{\epsilon_{i}}{1-r_{i}}=+\infty$, the case is similar to (1) and we use the fact $L\cap S=\emptyset$ to exclude this degeneration.
\item If $\lim_{i}\frac{\epsilon_{i}}{1-r_{i}}=R$ for some constant $R>0$, after a conformal change this degeneration is equivalent to an annulus bubble on the boundary. So we put this type of limit of solutions into the compactification.
\item If $\lim_{i}\frac{\epsilon_{i}}{1-r_{i}}=R=0$, then after a conformal change it is a disk bubble, with one interior point attaching to $S$. We put this type of limit of solutions into the compactification.
\end{enumerate}
\end{enumerate}

In conclusion, to get the compactification we add broken curves in (2), (3b), (3c) and broken strips. Next we glue the particular boundary component in (2) with the moduli space of holomorphic strips with one interior marked point, as we did in Theorem \ref{compactification}.

We write
$$
\partial_{K} \mathcal{M}_{1}(([l_{0},w_{0}], [l_{1},w_{1}]); K)
$$
as the boundary component containing elements when the interior marked point is mapped to $S=\partial K$. And we write
$$
\partial_{cy} \mathcal{M}_{1}^{cy}(([l_{0},w_{0}], [l_{1},w_{1}]); S)
$$
as the boundary component containing elements in (2). These two boundary components are the same since they contain the same set of elements. Then we glue these two compactified moduli spaces along this common boundary component.
\end{proof}

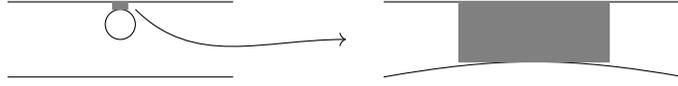
\begin{figure}
  \begin{tikzpicture}
  \draw (0,0)--(3,0);
  \draw (0,1)--(3,1);
  \draw (1.5,0.7) circle [radius=0.2];
  \filldraw [gray] (1.4,0.9)--(1.6,0.9)--(1.6,1)--(1.4,1)--(1.4,0.9);
  \draw [->] (1.7,0.9) to [out=315,in=180] (4.5,0.5);
  \draw (5,1)--(9,1);
  \draw (5,0) to [out=10,in=170] (9,0);
  \filldraw [gray] (6,0.2)--(8,0.2)--(8,1)--(6,1)--(6,0.2);
  \end{tikzpicture}
  \caption{Zoom in on the region where two boundaries meet.}
\end{figure}

We remark that if the class $\beta$ is energy minimal then this boundary component is the only boundary part. So after gluing we will get a closed moduli space.

Now we can define an operator deformed by $\mathfrak{b}=wK$. With the fixed $\rho$ we define
$$
\partial_{K} CF((L, \rho), H_{t}; \Lambda_{0})\rightarrow CF((L, \rho), H_{t}; \Lambda_{0})
$$
as
\begin{equation}
\begin{aligned}
& \partial_{K}(\sigma [l_{0},w_{0}])\\
=& \sum_{[l_{1},w_{1}]} Comp_{[w_{1}-w_{0}]}(\sigma) \sharp \mathcal{M}_{1}(([l_{0},w_{0}], [l_{1},w_{1}]); K+S))[l_{1},w_{1}].
\end{aligned}
\end{equation}
Here the sum is also over all $[l_{1},w_{1}]$ such that the corresponding moduli space is zero-dimensional.

Then we set $d^{\rho}_{K}=\delta^{\rho}+\partial_{K}$ and study when $d^{\rho}_{K}$ gives us a differential.

\begin{proposition}\label{square}
For a bulk deformation $\mathfrak{b}= wK$, the operator $d^{\rho}_{K}$ satisfies that
$$
(d^{\rho}_{K})^{2}=(\partial_{K})^{2}\equiv 0 \mod T^{E},
$$
where $E:=\min\lbrace E_{S}+ v(w), 2v(w), E_{+}\rbrace$.
\end{proposition}
\begin{proof}
By definition we have that
$$
(d^{\rho}_{K})^{2}=(\delta^{\rho})^{2}+ \delta^{\rho}\partial_{K}+ \partial_{K}\delta^{\rho}+ (\partial_{K})^{2}.
$$
Assuming Condition \ref{condition} the operator $\delta^{\rho}$ itself is a differential hence $(\delta^{\rho})^{2}\equiv 0 \mod T^{E_{+}}$. The last term $(\partial_{K})^{2}$ vanishes by the energy reason, since $\partial_{K}$ is weighted by at least $T^{v(w)}$. Then we need to show that $\delta^{\rho}\partial_{K}+ \partial_{K}\delta^{\rho}\equiv 0 \mod T^{E}$. This is obtained by considering one-dimensional moduli spaces of Floer strips with one interior hole and study the breaking of such strips, see Figure 8. By Proposition \ref{boundary} we have a list of possible degenerations. Now we discuss them by cases.

The first type of degeneration, which is strip breaking, corresponds to the sum $\delta^{\rho}\partial_{K}+ \partial_{K}\delta^{\rho}$.

The second type of degeneration corresponds to disk bubbles with Maslov index two. Since we assume Condition \ref{condition} there is no holomorphic disks with non-positive Maslov index. In this case disk bubbles on two components of line boundaries cancel with each other by the invariance of one-pointed open Gromov-Witten invariants.

The third type of degenerations are annulus bubbles. Under Condition \ref{condition}, they have positive Maslov indices and we only need to look at the Maslov two case. In this case, they correspond to the mixed type one-pointed open Gromov-Witten invariants defined in Section 4. Similar to the invariance of one-pointed open Gromov-Witten invariants, annulus bubbles appear on both sides of line boundaries. Hence they cancel in pairs.

In conclusion the codimension one boundaries of the moduli space are listed in Figure 8. Term (2) cannot happen since $L\cap S=\emptyset$. Terms in (3) and (4) cancel with each other. So the only contribution is $\delta^{\rho}\partial_{K}+ \partial_{K}\delta^{\rho}$, which corresponds to (1) and should be zero as a signed count. This completes our proof that $d^{\rho}_{K}$ is a differential modulo $T^{E}$.
\end{proof}

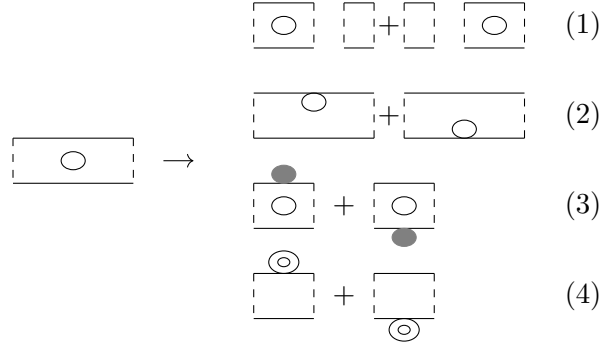
\begin{figure}
  \begin{tikzpicture}[xscale=0.8, yscale=0.6]
  \draw (-2,1) to (0,1);
  \draw (-2,0) to (0,0);
  \draw [dashed] (-2,1) to (-2,0);
  \draw [dashed] (0,1) to (0,0);
  \draw (-1,0.5) circle [radius=0.2];
  \draw [->] (0.5,0.5) to (1,0.5);
  \draw (2,1) to (4,1);
  \draw (2,2) to (4,2);
  \draw [dashed] (2,1) to (2,2);
  \draw [dashed] (4,1) to (4,2);
  \draw (3,1.8) circle [radius=0.2];
  \draw (4.5,1) to (6.5,1);
  \draw (4.5,2) to (6.5,2);
  \draw [dashed] (4.5,1) to (4.5,2);
  \draw [dashed] (6.5,1) to (6.5,2);
  \draw (5.5,1.2) circle [radius=0.2];
  \node at (4.25,1.5) {$+$};
  \draw (2,0) to (3,0);
  \draw (2,-1) to (3,-1);
  \draw (2.5,-0.5) circle [radius=0.2];
  \draw [dashed] (2,0) to (2,-1);
  \draw [dashed] (3,0) to (3,-1);
  \filldraw [gray] (2.5,0.2) circle [radius=0.2];
  \draw (4,0) to (5,0);
  \draw (4,-1) to (5,-1);
  \draw (4.5,-0.5) circle [radius=0.2];
  \draw [dashed] (4,0) to (4,-1);
  \draw [dashed] (5,0) to (5,-1);
  \filldraw [gray] (4.5,-1.2) circle [radius=0.2];
  \node at (3.5,-0.5) {$+$};
  \draw (2,-2) to (3,-2);
  \draw (2,-3) to (3,-3);
  \draw [dashed] (2,-2) to (2,-3);
  \draw [dashed] (3,-2) to (3,-3);
  \draw (2.5,-1.75) circle [radius=0.25];
  \draw (2.5,-1.75) circle [radius=0.1];
  \draw (4,-2) to (5,-2);
  \draw (4,-3) to (5,-3);
  \draw (4.5,-3.25) circle [radius=0.25];
  \draw (4.5,-3.25) circle [radius=0.1];
  \draw [dashed] (4,-2) to (4,-3);
  \draw [dashed] (5,-2) to (5,-3);
  \node at (3.5,-2.5) {$+$};
  \draw (2,3) to (3,3);
  \draw (2,4) to (3,4);
  \draw (3.5,3) to (4,3);
  \draw (3.5,4) to (4,4);
  \draw (4.5,3) to (5,3);
  \draw (5.5,3) to (6.5,3);
  \draw (4.5,4) to (5,4);
  \draw (5.5,4) to (6.5,4);
  \draw [dashed] (2,4) to (2,3);
  \draw [dashed] (3,4) to (3,3);
  \draw [dashed] (3.5,4) to (3.5,3);
  \draw [dashed] (4,4) to (4,3);
  \draw [dashed] (4.5,4) to (4.5,3);
  \draw [dashed] (5,4) to (5,3);
  \draw [dashed] (5.5,4) to (5.5,3);
  \draw [dashed] (6.5,4) to (6.5,3);
  \draw (2.5,3.5) circle [radius=0.2];
  \draw (6,3.5) circle [radius=0.2];
  \node at (4.25,3.5) {$+$};
  \node [right] at (7,3.5) {$(1)$};
  \node [right] at (7,1.5) {$(2)$};
  \node [right] at (7,-0.5) {$(3)$};
  \node [right] at (7,-2.5) {$(4)$};
  \end{tikzpicture}
  \caption{Degenerations of a one-dimensional moduli space.}
\end{figure}

Therefore the operator $d^{\rho}_{K}$ defines a differential modulo $T^{E}$ and we can talk about the cohomology modulo this energy. We write this cohomology as
$$
HF_{cy}((L, \rho), (L, \rho), H_{t}; K).
$$
In the next subsection we will study how this cohomology behaves with respect to the choice of Hamiltonian $H_{t}$. Then we can obtain the desired energy estimate. The key point is that how the energy of a Floer strip with one interior hole change under a Hamiltonian diffeomorphism.

Before dealing with a general Hamiltonian diffeomorphism, we look at the case when $H_{t}$ is $C^{2}$-small. Let $\phi$ be the time-one flow of $H_{t}$. We assume that $L\cap \phi(L)$ is transversal and $S\cap \phi(L)=\emptyset$. Moreover, the new Lagrangian submanifold $\phi(L)$ also satisfies Condition \ref{condition} with the same $J$. Then we can define a similar cohomology theory $HF_{int, cy}((L, \rho), (\phi(L), \rho); K)$ where the underlying complex is generated by intersection points of $L$ and $\phi(L)$. We call it the \textit{intersection model}. The differential is also a sum of two operators, one counts the usual holomorphic strips and the other counts holomorphic strips with one interior hole. Here the pair $(\phi(L), \rho)$ is actually $(\phi(L), (\phi^{-1})^{*}\rho)$ but for notational simplicity we just write it as $(\phi(L), \rho)$.

\begin{proposition}
The intersection model gives a cohomology theory
$$
HF_{int, cy}((L, \rho), (\phi(L), \rho); K)
$$
with coefficients $\Lambda_{0}/ T^{E}\Lambda_{0}$.
\end{proposition}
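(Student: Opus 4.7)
The plan is to mimic the construction of $HF_{cy}((L,\rho),(L,\rho),H_{t};K)$ given in Proposition 5.4, replacing Hamiltonian chords between two copies of $L$ by intersection points of $L$ and $\phi(L)$, and strips satisfying the perturbed Cauchy–Riemann equation by honest $J$-holomorphic strips with boundary conditions on the pair $(L,\phi(L))$. Concretely, for each intersection point $p\in L\cap\phi(L)$ pick a capping half-disk $w_{p}$ as in (5.1)--(5.2) and take the underlying complex
\begin{equation*}
CF_{int}((L,\rho),(\phi(L),\rho);\Lambda_{0}) = \bigoplus_{[p,w_{p}]} \hom(\mathcal{L}_{p}^{L},\mathcal{L}_{p}^{\phi(L)})\otimes_{\mathbb{C}}\Lambda_{0},
\end{equation*}
graded by the action functional. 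The differential is declared to be $d^{\rho}_{K}=\delta^{\rho}+\partial_{K}$, where $\delta^{\rho}$ counts rigid Floer strips on $(L,\phi(L))$ weighted by parallel transport via $Comp_{(B,\sigma)}$ as in (5.3)--(5.4), and $\partial_{K}$ counts elements of a glued moduli space $\mathcal{M}_{1}(([p_{0},w_{0}],[p_{1},w_{1}]);K+S)$ defined exactly as in Proposition 5.3 but now with line boundaries on $L$ and $\phi(L)$. Since $\phi$ is $C^{2}$-small and $L\cap S=S\cap\phi(L)=\emptyset$, the two Lagrangian boundary conditions on the strip are disjoint from $S$, so the compactification/gluing procedure of Proposition 5.3 goes through verbatim.

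Next I would verify the well-definedness of each summand separately. The square $(\delta^{\rho})^{2}=0$ follows from the standard argument for Lagrangian Floer cohomology of monotone/unobstructed pairs under Condition 1.1: the only potential codimension-one boundary strata of the one-dimensional moduli of strips on $(L,\phi(L))$ are strip breakings (which give $(\delta^{\rho})^{2}$) and disk bubbles on either boundary, and bubbles of Maslov index $\leq 0$ are excluded while the Maslov-two bubbles on each of $L$ and $\phi(L)$ are the same open Gromov–Witten count and cancel in pairs (this uses the identification of one-pointed counts on $L$ and $\phi(L)$ through the Hamiltonian isotopy). The term $(\partial_{K})^{2}$ vanishes modulo $T^{2E}$ purely by the filtration reason, since $v(K)\geq E$ and each application of $\partial_{K}$ consumes one copy of $K$.

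The main point is the mixed term $\delta^{\rho}\partial_{K}+\partial_{K}\delta^{\rho}\equiv 0 \pmod{T^{2E}}$. I would enumerate the codimension-one degenerations of the one-parameter family $\mathcal{M}_{1}(([p_{0},w_{0}],[p_{1},w_{1}]);K+S)$ in exact analogy with Figure 8 of Proposition 5.4:
\begin{enumerate}
\item[(1)] strip breaking at either end, contributing $\delta^{\rho}\partial_{K}+\partial_{K}\delta^{\rho}$;
\item[(2)] disk bubbles of Maslov index two on $L$ or on $\phi(L)$, which cancel in pairs under the Hamiltonian isotopy;
\item[(3)] degeneration into a strip with interior marked point on $K$ and a degeneration into a strip with interior hole on $S$; these two boundary strata were identified and glued by Proposition 5.3, so they cancel against each other in the glued moduli space;
\item[(4)] annulus bubbles on a line boundary, which are codimension two under Condition 1.1, and degenerations where the hole touches the line boundary, which cannot occur because $L\cap S=\emptyset=\phi(L)\cap S$ (the same argument as case (1) in the proof of Proposition 5.3, using exponential decay and the absence of $L\cap S$ and $\phi(L)\cap S$ intersections).
\end{enumerate}
Summing signed counts over all codimension-one boundary strata then yields the desired identity $\delta^{\rho}\partial_{K}+\partial_{K}\delta^{\rho}\equiv 0 \pmod{T^{2E}}$, so $(d^{\rho}_{K})^{2}\equiv 0\pmod{T^{2E}}$ and the cohomology with coefficients in $\Lambda_{0}/T^{2E}\Lambda_{0}$ is defined.

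The main obstacle is rigorously excluding case (4): when the interior hole drifts to $L$ or $\phi(L)$, one must show the limit is either a genuine boundary component already paired with another term or is forbidden. The key input is that both Lagrangian boundaries remain disjoint from $S$ (which requires $\phi$ to be $C^{2}$-small, as assumed), so that a finite-energy strip with a limiting circle boundary on $L\cap S$ (or $\phi(L)\cap S$) cannot exist. Once this geometric input is secured, the rest of the argument is a direct transcription of Proposition 5.4 to the intersection-point model, together with the standard orientation and sign bookkeeping of the pearl/cylinder corrected Floer complex.
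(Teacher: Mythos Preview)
Your proposal is correct and follows essentially the same route as the paper: transcribe the proof of Proposition~5.4 to the pair $(L,\phi(L))$, using $S\cap L=S\cap\phi(L)=\emptyset$ to exclude hole-to-boundary degenerations and using $C^{2}$-smallness of $\phi$ to ensure that $L$ and $\phi(L)$ satisfy Condition~1.1 for a \emph{common} $J$, so that the Maslov-two disk bubbles on the two boundaries have identical one-pointed counts and cancel. The paper singles out this last point (same $J$, hence same counts, hence no wall-crossing) as the reason the intersection model requires a small perturbation, whereas you flag the hole-touching-boundary exclusion as the main obstacle; both inputs are needed and your enumeration handles them correctly.
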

\begin{proof}
We need to show that the square of the differential is zero. It can be done by the same argument as before, using the assumption that both $S\cap \phi(L)=\emptyset$ and $S\cap L=\emptyset$. Moreover, since $H_{t}$ is $C^{2}$-small such that $L$ and $\phi(L)$ both satisfy Condition \ref{condition} with a common $J$. And the counts of holomorphic disks with Maslov index two, under the energy bound $E_{+}$, are the same. Hence possible disk bubbles on $L$ and $\phi(L)$ cancel with each other. Then the proof in Proposition \ref{square} works for this intersection model.

For a general Hamiltonian perturbation, there may be wall-crossing phenomenon for holomorphic disks with Maslov index two. So this intersection model is only defined with a small perturbation.
\end{proof}

\begin{remark}
With the assumption that $H_{t}$ is $C^{2}$-small we can prove that these two theories are equivalent as filtered cohomology groups. But we do not need this fact in our following context. The intersection model just plays a transition role between the disk model (coming from the potential function) and the chord model. In practice we will use a chord model of which the generators are chords with one end on $L$ and the other end on $\phi(L)$. And the displacement result will be proved by a limit argument since we can take $\phi$ arbitrarily small.

Another possible approach is to use a Morse-Bott model to define the above deformed Floer cohomology. In that case, we don't need to perturb $L$ by a small Hamiltonian.
\end{remark}

\subsection{Change of filtration under Hamiltonian diffeomorphisms}
Let $\phi$ be the time-one flow of $H_{t}$ (not necessarily $C^{2}$-small) such that $L$ and $\phi(L)$ intersect transversally, and the Hamiltonian chords are disjoint from $K$. Then the cohomology $HF_{cy}((L, \rho), (L, \rho), H_{t}; K)$ is well-defined with coefficient $\Lambda_{0}/ T^{E}\Lambda_{0}$. We can view the cohomology group as a $\Lambda_{0}$-module. Now we study how the choice of $H_{t}$ change the cohomology.

This deformed Floer complex is a modification of the Floer complex with bulk deformations and can be regarded as its ``first order approximation''. (Note that the differential is a sum of two operators.) The dependence of $H_{t}$ on the usual differential $\delta^{\rho}$ with local systems is well-studied in \cite{FOOO} and \cite{FOOO4}. So we focus on the part which involves the operator $\partial_{K}$. Actually we will prove a new energy estimate to construct different chain maps and chain homotopies then the rest of the arguments will follow the same proof in Section 6 and 7 in \cite{FOOO4}.

First we recall some relevant backgrounds on the \textit{geometric} version of Floer theory and the \textit{dynamical} one.

Let $L_{0}, L_{1}$ be two transversally intersecting Lagrangian submanifolds. The geometric version of the Floer complex $CF^{*}(L_{0}, L_{1})$ is generated by the intersection points
$$
p\in L_{0}\cap L_{1}
$$
where $p$ can be regarded as a constant element in the path space
$$
\Omega(L_{0}, L_{1})=\lbrace l:[0, 1]\rightarrow X\mid l(0)\in L_{0}, l(1)\in L_{1}\rbrace.
$$
We fix a base path $l_{a}\in \Omega(L_{0}, L_{1})$ for each component $a\in \pi_{0}(\Omega(L_{0}, L_{1}))$. Let $(l, w)$ be a pair such that $l\in \Omega(L_{0}, L_{1})$ and $w: [0, 1]^{2}\rightarrow X$ satisfying
$$
w(s, 0)\in L_{0}, w(s, 1)\in L_{1}, w(0, t)=l_{a}(t), w(1, t)=l(t).
$$
Then we define the geometric action functional
\begin{equation}
\mathcal{A}_{l_{a}}((l, w))=\int w^{*}\omega
\end{equation}
on the space of pairs $(l, w)$. Then as in the previous subsection, we modulo the $\Gamma$-equivalence relation between $(l,w)$ and $(l, w')$ and write $[l, w]$ as the equivalence class.

For two Lagrangian submanifolds $L_{0}, L_{1}$ and a time-dependent Hamiltonian $\tilde{H}_{t}$, the dynamical version of the Floer complex is generated by the solutions of Hamilton's equation
$$
\lbrace x\in \Omega(L_{0}, L_{1})\mid \dot{x}=X_{\tilde{H}_{t}}(x) \rbrace.
$$
For a fixed base path $x_{a}$ and a pair $[x, w]$, the dynamical action functional is defined as
\begin{equation}
\mathcal{A}_{\tilde{H}_{t}, x_{a}}([x, w])=\int w^{*}\omega +\int_{0}^{1} \tilde{H}_{t}(x(t))dt.
\end{equation}

The two versions of Floer complexes can be regarded as filtered complexes with respect to their action functionals. And those two Floer theories are related by a transformation. We refer to Section 4 in \cite{FOOO4} for more details.

Next we introduce the notion of the \textit{perturbed} Cauchy-Riemann equation to study the relation between these two versions of Floer theories. Let $\chi_{+}(\tau):\mathbb{R}\rightarrow \mathbb{R}$ be a smooth function such that
$$
\chi_{+}(\tau)=
\begin{cases}
0 \quad \tau\leq -2,\\
1 \quad \tau\geq -1,
\end{cases}
\chi_{+}'(\tau)\geq 0
$$
and $\chi_{-}(\tau)=1-\chi_{+}(\tau)$. Also we will use a family of smooth bump functions $\chi_{N}(\tau)$ for $N\geq 1$, satisfying
$$
\chi_{N}(\tau)=
\begin{cases}
0 \quad \abs\tau \geq N+1,\\
1 \quad \abs\tau \leq N,
\end{cases}
$$
and
$$
\chi_{N}'(\tau)\geq 0, \forall \tau\in [-N-1, -N], \quad \chi_{N}'(\tau)\leq 0, \forall \tau\in [N, N+1].
$$
In particular, we assume that on $[-N-1, -N]$ ($[N, N+1]$ respectively) the function $\chi_{N}$ is a translation of $\chi_{+}$ ($\chi_{-}$ respectively). For $N\leq 1$ we define $\chi_{N}(\tau)=N\chi_{1}(\tau)$ such that $\chi_{N}(\tau)$ converges to the zero function as $N$ goes to zero.

From now on we assume that our pairs $L_{0}, L_{1}$ intersect transversally, $L_{0}$ satisfies Condition \ref{condition} and $L_{1}$ is a small Hamiltonian perturbation of $L_{0}$. Suppose that $L_{0}$ satisfies Condition \ref{condition} for some $J_{0}$ then we choose $J_{1}$ such that $L_{1}$ satisfies Condition \ref{condition} for $J_{1}$ and $J_{0}, J_{1}$ can be connected by families of $J$'s satisfying Condition \ref{condition} except $(2)$. See Remark \ref{div} and \ref{analytic} for the choices of almost complex structures. The perturbed Cauchy-Riemann equation of $u(\tau, t): \mathbb{R}\times [0, 1]\rightarrow X$ is the following
\begin{equation}
\begin{cases}
\dfrac{\partial u}{\partial \tau}+ J^{\tau}_{t}(\dfrac{\partial u}{\partial t}- \chi(\tau)X_{\tilde{H}_{t}}(u))=0,\\
u(\tau, 0)\in L_{0}, \quad u(\tau, 1)\in L_{1}.
\end{cases}
\end{equation}
Here $J^{\tau}_{t}$ is a family of almost complex structures connecting $J_{0}$ and $J_{1}$. And $\chi(\tau)$ is one of the bump functions we defined before. Similarly we can define the perturbed Cauchy-Riemann equation where the domain is $Strip_{\epsilon, r}$, a strip with one interior hole.

The energy of a solution $u$ is defined as
$$
E_{(J, \chi(\tau), \tilde{H}_{t})}(u)= \int \abs{\dfrac{\partial u}{\partial \tau}}^{2}_{J}
$$
and we will study the moduli space of finite energy solutions. First we review the energy estimate of solutions when the domain is a strip without holes. From now on, we assume that all the Hamiltonian functions are normalized, that is, they satisfy that $\int_{X}\tilde{H}_{t} =0$.

\begin{lemma}(Lemma 5.1, \cite{FOOO4})
Let $u$ be a finite energy solution of the perturbed Cauchy-Riemann equation with domain $Strip$. Then we have that
\begin{equation}
\begin{aligned}
E_{(J, \chi(\tau), \tilde{H}_{t})}(u) &=\int u^{*}\omega +\int_{0}^{1}\tilde{H}_{t}(u(+\infty, t))dt\\
&-\int_{-\infty}^{\infty}\chi'(\tau)\int_{0}^{1}\tilde{H}_{t}(u)dtd\tau.
\end{aligned}
\end{equation}
\end{lemma}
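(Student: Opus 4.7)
The plan is a direct pointwise-then-integrate computation, in the style of the usual Floer energy identity, adapted to the fact that the Hamiltonian is switched on by the cut-off function $\chi(\tau)$.

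First I would rewrite the integrand $|\partial_\tau u|^2_J$ pointwise. From the perturbed Cauchy--Riemann equation we have $J\partial_\tau u = \partial_t u - \chi(\tau)X_{\tilde H_t}(u)$, so by $J$-compatibility of $\omega$
\begin{equation*}
\lvert\partial_\tau u\rvert^2_J \;=\; \omega(\partial_\tau u, J\partial_\tau u) \;=\; \omega(\partial_\tau u,\partial_t u)\;-\;\chi(\tau)\,\omega(\partial_\tau u, X_{\tilde H_t}(u)).
\end{equation*}
Using $\iota_{X_{\tilde H_t}}\omega = d\tilde H_t$, the second term equals $\chi(\tau)\,d\tilde H_t(\partial_\tau u)$, while the first is just $u^*\omega(\partial_\tau,\partial_t)$. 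Integrating over $\mathbb{R}\times[0,1]$ gives
\begin{equation*}
E_{(J,\chi,\tilde H_t)}(u) \;=\; \int u^*\omega \;+\; \int_{-\infty}^{\infty}\!\!\int_{0}^{1}\chi(\tau)\,d\tilde H_t(\partial_\tau u)\,dt\,d\tau.
\end{equation*}

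Next I would integrate by parts in $\tau$ in the second term. Setting $G(\tau,t):=\chi(\tau)\tilde H_t(u(\tau,t))$, one has
\begin{equation*}
\chi(\tau)\,d\tilde H_t(\partial_\tau u) \;=\; \partial_\tau G(\tau,t)\;-\;\chi'(\tau)\tilde H_t(u(\tau,t)),
\end{equation*}
so $\int_{-\infty}^{\infty}\chi(\tau)\,d\tilde H_t(\partial_\tau u)\,d\tau = \bigl[\chi(\tau)\tilde H_t(u)\bigr]_{-\infty}^{+\infty} - \int_{-\infty}^{\infty}\chi'(\tau)\tilde H_t(u)\,d\tau.$ Since $\chi=\chi_+$ satisfies $\chi(-\infty)=0$ and $\chi(+\infty)=1$, the boundary term reduces to $\tilde H_t(u(+\infty,t))$. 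Integrating in $t$ then produces exactly the two remaining terms in the claimed identity.

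The only technical point is justifying the boundary values and the vanishing of the $\tau=-\infty$ end: this uses that $u$ has finite energy, hence standard exponential decay (e.g. Proposition 11.1 in \cite{O2}-type arguments) ensures $u(\tau,\cdot)$ converges as $\tau\to\pm\infty$ to either a constant intersection point of $L_0\cap L_1$ (at $-\infty$, where $\chi\equiv 0$ anyway, so no boundary contribution survives) or a Hamiltonian chord of $\tilde H_t$ (at $+\infty$), and that $\partial_\tau u$ and $dG$ are integrable on each slice. This convergence is the main thing one must check carefully, but it is entirely standard for finite-energy solutions of the perturbed Cauchy--Riemann equation with transversal Lagrangian boundary conditions; after that the identity follows from the pointwise computation and one integration by parts.
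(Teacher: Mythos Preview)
Your proof is correct and is precisely the standard argument: rewrite $|\partial_\tau u|^2_J$ via the perturbed equation, use $\omega$-compatibility to split off $u^*\omega$, convert the Hamiltonian term via $\iota_{X_{\tilde H}}\omega = d\tilde H$, and integrate by parts in $\tau$ using $\chi_+(-\infty)=0$, $\chi_+(+\infty)=1$. The paper does not prove this lemma itself (it is quoted from \cite{FOOO4}), but your computation is exactly the one the paper carries out in detail for the analogous Lemma~5.8 on the domain $Strip_\epsilon$---compare your pointwise identity with equation~(5.13) there, and your integration by parts with (5.15)--(5.17); the only addition in that case is the contribution of the circle boundary $C(\epsilon)$.
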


When the domain is a strip with one interior hole we can do the similar computation. As expected, the result has one more term involving the integral on the circle boundary. We will compute the cases where $\chi=\chi_{+}, \chi=\chi_{-}$ and $\chi=\chi_{N}$. First we fix the center of the interior hole at $(0, \frac{1}{2})$ and write
$$
Strip_{\epsilon}:= Strip_{\epsilon, \frac{1}{2}}=\lbrace (\tau, t)\in \mathbb{R}\times [0, 1]\subset \mathbb{C}\mid \tau^{2}+ (t- \frac{1}{2})^{2}\geq \epsilon ^{2}\rbrace
$$
to do the computation.

\begin{lemma}\label{est1}
Let $u$ be a finite energy solution of the perturbed Cauchy-Riemann equation with domain $Strip_{\epsilon}$. Then we have that
\begin{equation}\label{est}
\begin{aligned}
E_{(J, \chi(\tau), \tilde{H}_{t})}(u) &=\int u^{*}\omega +\int_{0}^{1}\tilde{H}_{t}(u(+\infty, t))dt\\
&-\int_{-\infty}^{\infty}\chi'(\tau)\int_{0}^{1}\tilde{H}_{t}(u)dtd\tau -\int_{C(\epsilon)}\tilde{H}_{t}(u)
\end{aligned}
\end{equation}
when $\chi(\tau)=\chi_{+}(\tau)$.
\end{lemma}
\begin{proof}
We prove the lemma by a direct computation.
\begin{equation}
\begin{aligned}
E_{(J, \chi(\tau), H_{t})}(u) &=\int_{Strip_{\epsilon}} \abs{\dfrac{\partial u}{\partial \tau}}^{2}_{J}= \int_{Strip_{\epsilon}} \omega(\dfrac{\partial u}{\partial \tau}, J\dfrac{\partial u}{\partial \tau})\\
&= \int_{Strip_{\epsilon}} \omega(\dfrac{\partial u}{\partial \tau}, \dfrac{\partial u}{\partial t}-\chi(\tau)X_{\tilde{H}_{t}}(u))\\
&= \int_{Strip_{\epsilon}} \omega(\dfrac{\partial u}{\partial \tau}, \dfrac{\partial u}{\partial t})- \int_{Strip_{\epsilon}} \omega(\dfrac{\partial u}{\partial \tau}, \chi(\tau)X_{\tilde{H}_{t}}(u))\\
&= \int_{Strip_{\epsilon}} u^{*}\omega+ \int_{Strip_{\epsilon}} \chi(\tau)\cdot d\tilde{H}_{t}(u)(\dfrac{\partial u}{\partial \tau})\\
&= \int_{Strip_{\epsilon}} u^{*}\omega+ \int_{Strip_{\epsilon}} \chi(\tau)\cdot \dfrac{\partial}{\partial \tau}\tilde{H}_{t}(u).
\end{aligned}
\end{equation}

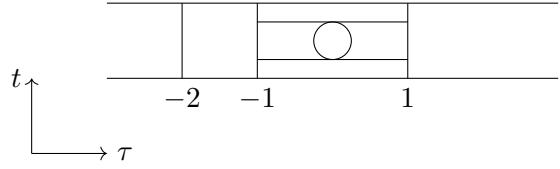
\begin{figure}
  \begin{tikzpicture}
  \draw (-3,0)--(3,0);
  \draw (-3,1)--(3,1);
  \draw (-2,0)--(-2,1);
  \draw (-1,0)--(-1,1);
  \draw (1,0)--(1,1);
  \draw (-1,0.25)--(1,0.25);
  \draw (-1,0.75)--(1,0.75);
  \draw (0,0.5) circle [radius=0.25];
  \draw [->] (-4,-1)--(-3,-1);
  \draw [->] (-4,-1)--(-4,0);
  \node [left] at (-4,0) {$t$};
  \node [right] at (-3,-1) {$\tau$};
  \node [below] at (-2,0) {$-2$};
  \node [below] at (-1,0) {$-1$};
  \node [below] at (1,0) {$1$};
  \end{tikzpicture}
  \caption{Divide $Strip_{\epsilon}$ into regions to do integration.}
\end{figure}

Next we consider the last term.
\begin{equation}\label{cal}
\begin{aligned}
& \int_{Strip_{\epsilon}} \chi(\tau)\cdot \dfrac{\partial}{\partial \tau}\tilde{H}_{t}(u)\\
=& \int_{Strip_{\epsilon}, \tau \leq -2} \chi(\tau)\cdot \dfrac{\partial}{\partial \tau}\tilde{H}_{t}(u)+ \int_{Strip_{\epsilon}, -2\leq \tau \leq -1} \chi(\tau)\cdot \dfrac{\partial}{\partial \tau}\tilde{H}_{t}(u)\\
+& \int_{Strip_{\epsilon}, -1\leq \tau \leq 1} \chi(\tau)\cdot \dfrac{\partial}{\partial \tau}\tilde{H}_{t}(u)+ \int_{Strip_{\epsilon}, 1\leq \tau} \chi(\tau)\cdot \dfrac{\partial}{\partial \tau}\tilde{H}_{t}(u)\\
\end{aligned}
\end{equation}
For $\tau\leq -2$, the integral is zero since $\chi(\tau)$ is zero. For $-2\leq \tau \leq -1$, the integral is
\begin{equation}\label{cal1}
\begin{aligned}
& \int_{-2}^{-1}\int_{0}^{1} \chi(\tau)\cdot \dfrac{\partial}{\partial \tau}\tilde{H}_{t}(u)\\
=& \int_{-2}^{-1} \chi(\tau)\cdot \dfrac{\partial}{\partial \tau}\int_{0}^{1}\tilde{H}_{t}(u) dt d\tau\\
=& (\chi(\tau) \cdot \int_{0}^{1}\tilde{H}_{t}(u) dt)\vert_{-2}^{-1}- \int_{-2}^{-1}\chi'(\tau)\int_{0}^{1}\tilde{H}_{t}(u)dt d\tau\\
=& \int_{0}^{1} \tilde{H}_{t}(u(-1, t)) dt- \int_{-2}^{-1}\chi'(\tau)\int_{0}^{1}\tilde{H}_{t}(u)dt d\tau.
\end{aligned}
\end{equation}
Similarly for $1\leq\tau$, the integral is
\begin{equation}\label{cal2}
\begin{aligned}
& \int_{1}^{+\infty}\int_{0}^{1} \chi(\tau)\cdot \dfrac{\partial}{\partial \tau}\tilde{H}_{t}(u)\\
=& \int_{0}^{1} \tilde{H}_{t}(u(+\infty, t)) dt- \int_{0}^{1} \tilde{H}_{t}(u(1, t)) dt .
\end{aligned}
\end{equation}
Now we consider the terms involving the interior hole. For $-1\leq \tau \leq 1$ we have that $\chi(\tau)\equiv 1$ and the integral can be split as
\begin{equation}\label{cal3}
\begin{aligned}
& \int_{Strip_{\epsilon}, -1\leq \tau \leq 1} \dfrac{\partial}{\partial \tau}\tilde{H}_{t}(u)\\
=& \int_{-1}^{1}\int_{\frac{1}{2}+\epsilon}^{1}\dfrac{\partial}{\partial \tau}\tilde{H}_{t}(u) dt d\tau+ \int_{-1}^{1}\int^{\frac{1}{2}-\epsilon}_{0}\dfrac{\partial}{\partial \tau}\tilde{H}_{t}(u) dt d\tau\\
+& \int_{\frac{1}{2}-\epsilon}^{\frac{1}{2}+\epsilon}\int_{-1}^{-\sqrt{\epsilon^{2}-(t-\frac{1}{2})^{2}}}\dfrac{\partial}{\partial \tau}\tilde{H}_{t}(u) d\tau dt + \int_{\frac{1}{2}-\epsilon}^{\frac{1}{2}+\epsilon}\int^{1}_{\sqrt{\epsilon^{2}-(t-\frac{1}{2})^{2}}}\dfrac{\partial}{\partial \tau}\tilde{H}_{t}(u) d\tau dt.
\end{aligned}
\end{equation}
Direct computation gives that
\begin{equation}\label{cal4}
\begin{aligned}
& \int_{-1}^{1}\int_{\frac{1}{2}+\epsilon}^{1}\dfrac{\partial}{\partial \tau}\tilde{H}_{t}(u) dt d\tau= \int_{\frac{1}{2}+ \epsilon}^{1}\tilde{H}_{t}(u(1, t)) dt- \int_{\frac{1}{2}+ \epsilon}^{1}\tilde{H}_{t}(u(-1, t)) dt\\
& \int_{-1}^{1}\int^{\frac{1}{2}-\epsilon}_{0}\dfrac{\partial}{\partial \tau}\tilde{H}_{t}(u) dt d\tau= \int^{\frac{1}{2}- \epsilon}_{0}\tilde{H}_{t}(u(1, t)) dt- \int^{\frac{1}{2}- \epsilon}_{0}\tilde{H}_{t}(u(-1, t)) dt
\end{aligned}
\end{equation}
and
\begin{equation}\label{cal5}
\begin{aligned}
& \int_{\frac{1}{2}-\epsilon}^{\frac{1}{2}+\epsilon}\int_{-1}^{-\sqrt{\epsilon^{2}-(t-\frac{1}{2})^{2}}}\dfrac{\partial}{\partial \tau}\tilde{H}_{t}(u) d\tau dt\\
=& \int_{\frac{1}{2}- \epsilon}^{\frac{1}{2}+ \epsilon}\tilde{H}_{t}(u(-\sqrt{\epsilon^{2}-(t-\frac{1}{2})^{2}}, t)) dt- \int_{\frac{1}{2}- \epsilon}^{\frac{1}{2}+ \epsilon}\tilde{H}_{t}(u(-1, t)) dt\\
& \int_{\frac{1}{2}-\epsilon}^{\frac{1}{2}+\epsilon}\int^{1}_{\sqrt{\epsilon^{2}-(t-\frac{1}{2})^{2}}}\dfrac{\partial}{\partial \tau}\tilde{H}_{t}(u) d\tau dt\\
=& -\int_{\frac{1}{2}- \epsilon}^{\frac{1}{2}+ \epsilon}\tilde{H}_{t}(u(\sqrt{\epsilon^{2}-(t-\frac{1}{2})^{2}}, t)) dt+ \int_{\frac{1}{2}- \epsilon}^{\frac{1}{2}+ \epsilon}\tilde{H}_{t}(u(1, t)) dt.
\end{aligned}
\end{equation}
Put all $(\ref{cal1})-(\ref{cal5})$ into (\ref{cal}) we get the desired estimate. Here we write
$$
\begin{aligned}
&\int_{C(\epsilon)} \tilde{H}_{t}(u)\\
=& \int_{\frac{1}{2}- \epsilon}^{\frac{1}{2}+ \epsilon}\tilde{H}_{t}(u(\sqrt{\epsilon^{2}-(t-\frac{1}{2})^{2}}, t)) dt- \int_{\frac{1}{2}- \epsilon}^{\frac{1}{2}+ \epsilon}\tilde{H}_{t}(u(-\sqrt{\epsilon^{2}-(t-\frac{1}{2})^{2}}, t)) dt,
\end{aligned}
$$
which corresponds to integrate $\tilde{H}_{t}$ along $C(\epsilon)$ in the counter-clockwise direction.
\end{proof}

\begin{remark}\label{echange}
Note that
$$
\begin{aligned}
\int_{C(\epsilon)} \tilde{H}_{t}(u)= &\int_{\frac{1}{2}- \epsilon}^{\frac{1}{2}+ \epsilon}\tilde{H}_{t}(u(\sqrt{\epsilon^{2}-(t-\frac{1}{2})^{2}}, t)) dt- \int_{\frac{1}{2}- \epsilon}^{\frac{1}{2}+ \epsilon}\tilde{H}_{t}(u(-\sqrt{\epsilon^{2}-(t-\frac{1}{2})^{2}}, t)) dt\\
=& \int_{\frac{1}{2}- \epsilon}^{\frac{1}{2}+ \epsilon}[\tilde{H}_{t}(u(\sqrt{\epsilon^{2}-(t-\frac{1}{2})^{2}}, t))- \tilde{H}_{t}(u(-\sqrt{\epsilon^{2}-(t-\frac{1}{2})^{2}}, t))] dt\\
\leq & \int_{\frac{1}{2}- \epsilon}^{\frac{1}{2}+ \epsilon}[\max_{S}\tilde{H}_{t} -\min_{S}\tilde{H}_{t}]dt\\
\leq & \int_{0}^{1}[\max_{S}\tilde{H}_{t} -\min_{S}\tilde{H}_{t}]dt = \norm{\tilde{H}_{t}}_{S}.
\end{aligned}
$$
Hence we actually find that
$$
-\norm{\tilde{H}_{t}}_{S}\leq  \int_{C(\epsilon)} \tilde{H}_{t}(u)\leq  \norm{\tilde{H}_{t}}_{S}
$$
for all $\epsilon\in (0, \frac{1}{2})$.
\end{remark}

By the same computation when $\chi(\tau)=\chi_{-}$ we have that

\begin{lemma}\label{est2}
Let $u$ be a finite energy solution of the perturbed Cauchy-Riemann equation with domain $Strip_{\epsilon}$. Then we have that
\begin{equation}
\begin{aligned}
E_{(J, \chi(\tau), \tilde{H}_{t})}(u) &=\int u^{*}\omega -\int_{0}^{1}\tilde{H}_{t}(u(-\infty, t))dt\\
&-\int_{-\infty}^{\infty}\chi'(\tau)\int_{0}^{1}\tilde{H}_{t}(u)dtd\tau -\int_{C(\epsilon)}\tilde{H}_{t}(u)
\end{aligned}
\end{equation}
when $\chi(\tau)=\chi_{-}(\tau)$.
\end{lemma}

And when $\chi(\tau)=\chi_{N}$ we have that

\begin{lemma}\label{est3}
Let $u$ be a finite energy solution of the perturbed Cauchy-Riemann equation with domain $Strip_{\epsilon}$. Then we have that
\begin{equation}
E_{(J, \chi(\tau), \tilde{H}_{t})}(u) =\int u^{*}\omega -\int_{-\infty}^{\infty}\chi'(\tau)\int_{0}^{1}\tilde{H}_{t}(u)dtd\tau -\int_{C(\epsilon)}\tilde{H}_{t}(u)
\end{equation}
when $\chi(\tau)=\chi_{N}(\tau)$.
\end{lemma}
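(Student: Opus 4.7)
The plan is to proceed exactly as in Lemma 5.6, where the only input specific to $\chi_+$ was the value of $\chi$ at the endpoints of the integration-by-parts regions; the algebraic identity leading up to (5.13) is purely formal and holds for any smooth cutoff. So I would start from
$$
E_{(J, \chi(\tau), \tilde{H}_{t})}(u) = \int_{Strip_\epsilon} u^{*}\omega - \int_{Strip_\epsilon} \chi(\tau)\, \dfrac{\partial}{\partial \tau}\tilde{H}_{t}(u)
$$
obtained by pairing the perturbed Cauchy-Riemann equation with $\omega$ and using $\omega(\cdot, X_{\tilde{H}_t}) = d\tilde{H}_t$. Everything reduces to evaluating the second integral.

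Next I would partition the $\tau$-line according to the behaviour of $\chi_N$ into five regions: $(-\infty, -N-1]$ and $[N+1, +\infty)$ where $\chi_N \equiv 0$ and hence contribute nothing; the two ramp intervals $[-N-1,-N]$ and $[N, N+1]$ where $\chi_N$ rises or falls; and the plateau $[-N, N]$ where $\chi_N \equiv 1$ and which (for $N \geq 1$) contains the interior hole. On each ramp I would apply integration by parts in $\tau$, exactly as in (5.15), obtaining a boundary contribution at $\tau = \mp N$ together with a $-\chi_N'(\tau)$ term. On the plateau minus the hole, since $\chi_N \equiv 1$, I would decompose $Strip_\epsilon \cap \{-N \leq \tau \leq N\}$ into the four sub-rectangles surrounding $C(\epsilon)$ as in (5.17)--(5.19) of the proof of Lemma 5.6; this produces two boundary terms at $\tau = \pm N$ together with the hole integral $\int_{C(\epsilon)}\tilde{H}_t(u)$.

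The decisive step is then a cancellation check: the boundary contributions at $\tau = \pm N$ coming from the plateau region have opposite signs to those arising from the integration by parts on the ramps, so they cancel pairwise. Because $\chi_N$ has compact support there are no terms at $\tau = \pm\infty$ — this is precisely what distinguishes the $\chi_N$ case from Lemmas 5.6 and 5.7, where the corresponding asymptotic term $\int_0^1 \tilde{H}_t(u(\pm\infty, t))dt$ survives. What remains after cancellation is
$$
-\int_{-N-1}^{-N}\chi_N'(\tau)\int_0^1 \tilde{H}_t(u)dt\,d\tau - \int_N^{N+1}\chi_N'(\tau)\int_0^1 \tilde{H}_t(u)dt\,d\tau + \int_{C(\epsilon)}\tilde{H}_t(u),
$$
and since $\chi_N'$ is supported in $[-N-1,-N] \cup [N, N+1]$ the first two integrals combine into $-\int_{-\infty}^\infty \chi_N'(\tau)\int_0^1 \tilde{H}_t(u)dt\,d\tau$, giving (5.21). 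For the edge case $N < 1$, where the ramp regions may overlap with the hole, one argues identically after choosing $\epsilon$ small enough (or rescaling $\chi_N$ using the convention $\chi_N = N\chi_1$ declared just before the lemma), and the same cancellation goes through.

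The proof is essentially bookkeeping; the only conceptual point is tracking the cancellation of the $\tau = \pm N$ boundary terms. The main mild obstacle is to verify that the orientation on $C(\epsilon)$ coming from the plateau computation agrees with the convention of Lemma 5.6, but this is automatic since both use the same parametrization of $Strip_\epsilon$ and the same position $(0, 1/2)$ for the center of the hole.
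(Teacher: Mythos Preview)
Your proposal is correct and follows exactly the approach the paper intends: the paper gives no separate proof for this lemma, stating only that it follows by the same computation as Lemma 5.8, and you have correctly spelled out that computation---partitioning according to the support of $\chi_N$, integrating by parts on the ramps, handling the hole on the plateau as in (5.17)--(5.19), and observing that the compact support of $\chi_N$ kills the asymptotic terms at $\tau=\pm\infty$. One minor remark: for $N<1$ the plateau value of $\chi_N=N\chi_1$ is $N$ rather than $1$, so the hole term acquires a factor of $N$; this does not affect the subsequent energy bound in Proposition 5.12, and the paper itself does not comment on it.
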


The above three lemmas provide necessary energy estimates for us to establish the chain maps and chain homotopies when we change the Hamiltonian functions $H_{t}$. More precisely, they give the estimates of maximal action loss for chain maps. Now we explain how to use them in our situations.

Let $[p,w']$ and $[l,w]$ be the input and output of the strip respectively, with $[w-w']=[u]$. Then the first two terms in (\ref{est}) correspond to the difference between the actions of the input and output. And the last two terms correspond to the ``action loss''. Note that $\chi_{+}(\tau)\geq 0$ and $\chi_{+}(-\infty)=0, \chi_{+}(+\infty)=1$ we have that the maximal action loss is
\begin{equation}\label{el1}
-\int_{0}^{1} \max_{X} H_{t}dt -\int_{C(\epsilon)} H_{t}(u)\geq -\int_{0}^{1} \max_{X} H_{t}dt -\norm{H_{t}}_{S}
\end{equation}
for any solution $u$ in Lemma \ref{est1}. Similarly the maximal action loss is
\begin{equation}\label{el2}
\int_{0}^{1} \min_{X} H_{t}dt -\int_{C(\epsilon)} H_{t}(u)\geq \int_{0}^{1} \min_{X} H_{t}dt -\norm{H_{t}}_{S}
\end{equation}
for any solution $u$ in Lemma \ref{est2}. We remark that both Lemma \ref{est1} and Lemma \ref{est2} estimate the energy of the solution over the domain $Strip_{\epsilon}$ where the interior hole is centered at $(0, \frac{1}{2})$. If we move the center of the hole to $(r', r'')$ then similar estimate can only be weaker. For example, when the hole is contained outside the support of $\chi(\tau)$ then the fourth term in (\ref{est}) will be zero. When the hole is not contained in the region where $\chi(\tau)=1$, the fourth term will only be smaller than the case we did in (\ref{est}) because $\chi(\tau)\leq 1$ and $\chi'(\tau) \geq 0$. In conclusion, the above estimates of maximal action loss work for all the case when we move the center of the interior hole.

Next we construct the chain maps. We fix a $C^{2}$-small perturbation $\varphi$ such that $L\cap \varphi(L)$ transversally and $\varphi(L)\cap S=\emptyset$. Now for a Hamiltonian $G_{t}$, let $\phi$ be its time-one flow. When $L\cap \phi(\varphi(L))$ is transversal we can also define the cohomology
$$
HF_{cy}((L, \rho), (\varphi(L), \rho), G_{t}; K)
$$
where the generators are chords of $G_{t}$ with ends on $L$ and $\varphi(L)$. Here we remark that when $\varphi$ is small $L$ and $\varphi(L)$ have the same one-pointed open Gromov-Witten invariants, under energy $E_{+}$. Hence we can define this cohomology generated by chords with ends on $L$ and $\varphi(L)$, similar to Proposition \ref{square}. For a general Hamiltonian isotopy there may be wall-crossing phenomenon of the one-pointed invariants which cannot be prevented only by Condition \ref{condition}.

Then we use the perturbed Cauchy-Riemann equation to construct chain maps
$$
CF_{int, cy}((L, \rho), (\varphi(L), \rho); K)\rightarrow CF_{cy}((L, \rho), (\varphi(L), \rho), G_{t}; K)
$$
and
$$
CF_{cy}((L, \rho), (\varphi(L), \rho), G_{t}; K)\rightarrow CF_{int, cy}((L, \rho), (\varphi(L), \rho); K).
$$
We remark that the two maps are constructed by using the cut-off functions $\chi_{+}$ and $\chi_{-}$ respectively. Then chain homotopy map is constructed by using the cut-off function $\chi_{N}$.

\begin{figure}
  \begin{tikzpicture}[xscale=1, yscale=0.8]
  \draw (0,0) to [out=0,in=180] (2,-1);
  \draw (2,-1) to (3,-1);
  \draw (0,-1) to [out=0,in=180] (2,0);
  \draw (2,0) to (3,0);
  \draw [dashed] (3,-1) to (3,0);
  \draw [->] (3.5,-0.5) to (4,-0.5);
  \draw (4.5,0) to [out=0,in=180] (5,-1);
  \draw (4.5,-1) to [out=0,in=180] (5,0);
  \draw (5,0) to [out=0,in=180] (5.5,-1);
  \draw (5.5,-1) to (6,-1);
  \draw (5,-1) to [out=0,in=180] (5.5,0);
  \draw (5.5,0) to (6,0);
  \draw [dashed] (6,-1) to (6,0);
  \node at (6.5,-0.5) {$+$};
  \draw (7,0) to [out=0,in=180] (7.5,-1);
  \draw (7,-1) to [out=0,in=180] (7.5,0);
  \draw (7.5,-1) to (8,-1);
  \draw (7.5,0) to (8,0);
  \draw [dashed] (8,-1) to (8,0);
  \draw (8.2,0) to (8.7,0);
  \draw (8.2,-1) to (8.7,-1);
  \draw [dashed] (8.2,-1) to (8.2,0);
  \draw [dashed] (8.7,-1) to (8.7,0);
  \node [right] at (9,-0.5) {$(1)$};
  \draw (0,-3) to [out=0,in=180] (2,-4);
  \draw (2,-4) to (3,-4);
  \draw (0,-4) to [out=0,in=180] (2,-3);
  \draw (2,-3) to (3,-3);
  \draw (2.3,-3.5) circle [radius=0.2];
  \draw [dashed] (3,-4) to (3,-3);
  \draw [->] (3.5,-3.5) to (4,-3.5);
  \draw (4.5,-2) to [out=0,in=180] (5,-3);
  \draw (4.5,-3) to [out=0,in=180] (5,-2);
  \draw (5,-2) to [out=0,in=180] (5.5,-3);
  \draw (5.5,-3) to (6,-3);
  \draw (5,-3) to [out=0,in=180] (5.5,-2);
  \draw (5.5,-2) to (6,-2);
  \draw (5.6,-2.5) circle [radius=0.15];
  \draw [dashed] (6,-3) to (6,-2);
  \node at (6.5,-2.5) {$+$};
  \draw (7,-2) to [out=0,in=180] (7.5,-3);
  \draw (7,-3) to [out=0,in=180] (7.5,-2);
  \draw (7.5,-3) to (8,-3);
  \draw (7.5,-2) to (8,-2);
  \draw (7.6,-2.5) circle [radius=0.15];
  \draw [dashed] (8,-3) to (8,-2);
  \draw (8.2,-2) to (8.7,-2);
  \draw (8.2,-3) to (8.7,-3);
  \draw [dashed] (8.2,-3) to (8.2,-2);
  \draw [dashed] (8.7,-3) to (8.7,-2);
  \node [right] at (9,-2.5) {$(2)$};
  \draw (4.5,-4) to [out=0,in=180] (5,-5);
  \draw (4.5,-5) to [out=0,in=180] (5,-4);
  \draw (5,-4) to [out=0,in=180] (5.5,-5);
  \draw (5.5,-5) to (6,-5);
  \draw (5,-5) to [out=0,in=180] (5.5,-4);
  \draw (5.5,-4) to (6,-4);
  \draw (5,-4.5) circle [radius=0.15];
  \draw [dashed] (6,-5) to (6,-4);
  \node at (6.5,-4.5) {$+$};
  \draw (7,-4) to [out=0,in=180] (7.5,-5);
  \draw (7,-5) to [out=0,in=180] (7.5,-4);
  \draw (7.5,-5) to (8,-5);
  \draw (7.5,-4) to (8,-4);
  \draw [dashed] (8,-5) to (8,-4);
  \draw (8.2,-4) to (8.7,-4);
  \draw (8.2,-5) to (8.7,-5);
  \draw (8.45,-4.5) circle [radius=0.15];
  \draw [dashed] (8.2,-5) to (8.2,-4);
  \draw [dashed] (8.7,-5) to (8.7,-4);
  \node [right] at (9,-4.5) {$(3)$};
  \end{tikzpicture}
  \caption{Degenerations of solutions of the perturbed Cauchy-Riemann equation.}
\end{figure}
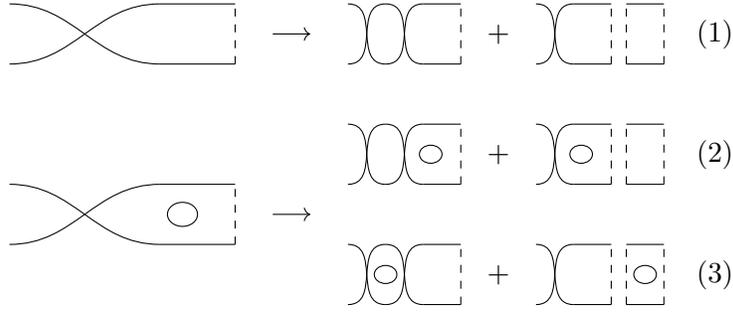

\begin{proposition}
Let $(X, S, L)$ be a symplectic 6-manifold, a Lagrangian 3-sphere and a Lagrangian submanifold satisfying Condition \ref{condition}. Let $(H_{t}, \varphi)$ and $(G_{t}, \phi)$ be the Hamiltonians we chose as above. Then there are two chain maps
$$
\Phi_{+}: CF_{int, cy}((L, \rho), (\varphi(L), \rho); K)\rightarrow CF_{cy}((L, \rho), (\varphi(L), \rho), G_{t}; K)
$$
and
$$
\Phi_{-}: CF_{cy}((L, \rho), (\varphi(L), \rho), G_{t}; K)\rightarrow CF_{int, cy}((L, \rho), (\varphi(L), \rho); K),
$$
modulo $T^{E}$.
\end{proposition}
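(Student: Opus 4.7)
The plan is to define $\Phi_{+}$ as a weighted count of isolated finite-energy solutions to the perturbed Cauchy-Riemann equation with cutoff $\chi = \chi_{+}$, combining two types of domains exactly as in the construction of $\partial_{K}$ in Section 5.1. Given an intersection point $p \in L \cap \varphi(L)$ (a generator of the intersection model) and a Hamiltonian chord $[l,w]$ of $G_{t}$ with ends on $L$ and $\varphi(L)$, I would count isolated elements of two moduli spaces sharing a common boundary: first, solutions $u : Strip \to X$ of the $\chi_{+}$-perturbed equation asymptotic to $p$ at $-\infty$ and to $[l,w]$ at $+\infty$, with an interior marked point at $(0,\tfrac{1}{2})$ mapped into $K$; second, solutions $u : Strip_{\epsilon} \to X$ of the same equation with the same asymptotics, where the circle boundary $C(\epsilon)$ is mapped to $S$. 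As in Proposition 5.3 these two moduli spaces are compactified and then glued along the common stratum where $\epsilon \to 0$ and the interior marked point hits $S = \partial K$, using the $S^{1}$-equivariant gluing of Theorem 4.9 adapted to the Floer setting. The definition of $\Phi_{-}$ is identical with $\chi_{+}$ replaced by $\chi_{-}$ and the roles of the input and output generators exchanged.

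Well-definedness of $\Phi_{\pm}$ as a map on the $\Lambda_{0}/T^{2E}\Lambda_{0}$ complexes requires the count to be finite modulo $T^{2E}$, which follows from the energy identities of Lemmas 5.8 and 5.9: the maximal energy loss estimates in (5.22)--(5.23) show that the symplectic area of a contributing solution is bounded above in terms of the action difference plus a fixed constant depending only on $\|G_{t}\|_{X}$ and $\|G_{t}\|_{S}$, so Gromov compactness applies. Combined with the parallel-transport factor $\mathrm{Comp}_{(B,\sigma)}$ for the local system, these counts define $\Phi_{\pm}$ on chain level.

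To prove the chain map identity $\Phi_{+} \circ d^{\rho,int}_{K} = d^{\rho}_{K} \circ \Phi_{+}$ (and similarly for $\Phi_{-}$), I would look at the codimension-one boundaries of the one-dimensional components of the glued moduli spaces. There are four possible sources, paralleling the analysis in Proposition 5.4 and illustrated in Figure 10: (i) strip breaking at the $+\infty$ or $-\infty$ end contributes the two sides of the chain map equation; (ii) disk bubbles on the $L$-boundary or the $\varphi(L)$-boundary with Maslov index two cancel pairwise because, for $\varphi$ sufficiently $C^{2}$-small, the one-pointed open Gromov-Witten invariants of $L$ and of $\varphi(L)$ agree and Condition 1.1 excludes non-positive Maslov bubbles; (iii) the degeneration in which the interior marked point hits $S$ cancels against the $\epsilon \to 0$ collapse of a cylinder from the second moduli space, by the very gluing used to define the combined moduli space; (iv) other degenerations (annulus bubbles, bubbles touching $S$ away from the glued stratum, sphere bubbles) are excluded as in Proposition 5.3, either by the disjointness $L \cap S = \varphi(L) \cap S = \emptyset$, by Condition 1.1, or by being of codimension at least two. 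The standard chain homotopy argument using the family $\chi_{N}$ and Lemma 5.9 then shows $\Phi_{-} \circ \Phi_{+}$ and $\Phi_{+} \circ \Phi_{-}$ are chain-homotopic to the identity, with energy loss controlled by (5.22)--(5.23).

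The principal technical obstacle is the rigorous verification that the gluing between the disk-with-interior-point model and the strip-with-hole model from Section 4 continues to function in the presence of the Hamiltonian term $\chi(\tau) X_{G_{t}}$, and that the $S^{1}$-equivariance used to identify the neighborhood of the glued stratum with a half-open interval survives the perturbation. Once this is established, all remaining pieces are routine and follow the template of Section 6 and 7 in \cite{FOOO4}, modified only by the extra boundary term $\int_{C(\epsilon)} \tilde{H}_{t}(u)$ in the energy identities, which is uniformly bounded by $\|G_{t}\|_{S}$ and thus does not obstruct compactness.
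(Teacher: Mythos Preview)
Your construction of $\Phi_{+}$ is missing its zeroth-order piece. You define $\Phi_{+}$ purely via the glued moduli space (strip with interior marked point on $K$ together with strip with hole on $S$), which is the analogue of the paper's $\Phi_{+,1}$. But the paper sets $\Phi_{+}=T^{E_{+}}(\Phi_{+,0}+\Phi_{+,1})$, where $\Phi_{+,0}$ is the ordinary continuation map counting \emph{plain} $\chi_{+}$-perturbed strips with no interior marked point and no hole. This omission is not cosmetic: without $\Phi_{+,0}$ the chain-map equation fails. When you look at the codimension-one boundary of the one-dimensional glued moduli space $\mathcal{M}_{1}$, strip breaking produces not only the configurations where the hole remains on the perturbed piece (giving $\delta^{\rho}\Phi_{+,1}+\Phi_{+,1}\delta^{\rho}_{int}$) but also the configurations where the hole migrates to the broken-off unperturbed piece (giving $\partial_{K}\Phi_{+,0}+\Phi_{+,0}\partial_{K,int}$, the row (3) of Figure 10). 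These last terms involve $\Phi_{+,0}$, which you never defined, and they have nothing in your setup to cancel against. Only once $\Phi_{+,0}$ is added does the full expansion into eight terms organise correctly: $\Phi_{+,0}\delta^{\rho}_{int}+\delta^{\rho}\Phi_{+,0}$ vanishes as the boundary of the plain-strip moduli space $\mathcal{M}_{0}$, the four cross terms from $\mathcal{M}_{1}$ vanish together, and $\Phi_{+,1}\partial_{K,int}+\partial_{K}\Phi_{+,1}$ vanishes modulo $T^{2E}$ by the valuation hypothesis $v(K)\geq E$.

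Two smaller points. First, the paper makes explicit the energy shift $T^{E_{+}}$ with $E_{+}=\int_{0}^{1}\max_{X}G_{t}\,dt+\norm{G_{t}}_{S}$ (and similarly $E_{-}$), which is what guarantees $\Phi_{\pm}$ are $\Lambda_{0}$-linear rather than merely $\Lambda$-linear; you allude to the energy estimates but do not build the weight into the definition. Second, your final paragraph about $\Phi_{-}\Phi_{+}$ being chain-homotopic to the identity belongs to the next proposition, not this one; the statement here only asks that $\Phi_{\pm}$ be chain maps.
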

\begin{proof}
The proof is similar to the proof of Theorem 6.2 in \cite{FOOO4}. The only difference is that we apply our energy estimate of the change of filtration when the domain has an interior hole. So this difference results in the extra term $\norm{H}_{S}$.

First for a fixed cut-off function $\chi_{+}$ we define a chain map
$$
\Phi_{+}: CF_{int, cy}((L, \rho), (\varphi(L), \rho); K)\rightarrow CF_{cy}((L, \rho), (\varphi(L), \rho), G_{t}; K)
$$
by $\Phi_{+}=T^{\tilde{E}_{+}}(\Phi_{+, 0}+ \Phi_{+, 1})$. Here
$$
\Phi_{+, 0}(\sigma[p,w'])= \sum_{[l,w]} Comp_{[w-w']}(\sigma) \sharp \mathcal{M}_{0}([p,w'], [l,w])\cdot [l,w]
$$
and
$$
\Phi_{+, 1}(\sigma[p,w'])= \sum_{[l,w]} Comp_{[w-w']}(\sigma) \sharp \mathcal{M}_{1}([p,w'], [l,w])\cdot [l,w]\cdot T^{v(K)}.
$$
Here the actual bulk deformation is $\mathfrak{b}=wK, w\in \Lambda+$. For notational simplicity, we just write $v(K)$ instead of $v(w)$ since $w$ also represents some role in $[l, w]$. The energy weights $T^{\tilde{E}_{+}}$ is necessary if we require that these maps do not decrease the action filtration. Note that there will be energy loss for the perturbed Cauchy-Riemann equation. And the maximal energy loss is computed in (\ref{el1}) and (\ref{el2}). So if we set
$$
\tilde{E}_{+}=\int_{0}^{1} \max_{X} G_{t}dt +\norm{G_{t}}_{S}
$$
then we get a map which does not decrease the action.

We explain the moduli spaces as follows. The moduli space $\mathcal{M}_{0}([p,w'], [l,w])$ contains solutions of the perturbed Cauchy-Riemann equation when the domain is a genuine strip, representing the class $[w-w']$. The moduli space $\mathcal{M}_{1}([p,w'], [l,w])$ is obtained by gluing two moduli spaces
$$
\mathcal{M}_{1}([p,w'], [l,w])= \mathcal{M}_{1, pt}([p,w'], [l,w])\sqcup \mathcal{M}_{1, hole}([p,w'], [l,w])/\sim
$$
where $\mathcal{M}_{1, pt}([p,w'], [l,w])$ contains solutions of the perturbed Cauchy-Riemann equation when the domain is a strip with one interior marked point and $\mathcal{M}_{1, hole}([p,w'], [l,w])$ contains solutions when the domain is a strip with one interior hole. Both the interior marked point and the center of the interior hole can move freely. And the gluing is understood as we did in defining $\partial_{K}$.

Next we show that $\Phi_{+}$ is a chain map. That is,
$$
\Phi_{+} d^{\rho}_{K, int}+ d^{\rho}_{K}\Phi_{+}\equiv 0 \mod T^{E}.
$$
Note that
\begin{equation}\label{chain map}
\begin{aligned}
& \Phi_{+} d^{\rho}_{K, int}+ d^{\rho}_{K}\Phi_{+}\\
=& T^{\tilde{E}_{+}}(\Phi_{+, 0}+ \Phi_{+, 1})(\delta^{\rho}_{int}+ \partial_{K, int})+ T^{\tilde{E}_{+}}(\delta^{\rho}+ \partial_{K})(\Phi_{+, 0}+ \Phi_{+, 1})
\end{aligned}
\end{equation}
and there are eight terms in the full expansion. After compensating the energy loss by $T^{\tilde{E}_{+}}$, the sum
$$
T^{\tilde{E}_{+}}(\Phi_{+, 1}\partial_{K, int}+ \partial_{K}\Phi_{+, 1})\equiv 0 \mod T^{2v(K)} \quad (\text{hence} \equiv 0 \mod T^{E})
$$
by the energy reason. So we need to check the remaining sum of six terms is zero. The proof is by studying all types of degenerations of one-dimensional moduli spaces. By similar argument in Proposition \ref{square}, we can exclude the sphere bubbles, disk bubbles and annulus bubbles. Then there are six types of degenerations for the moduli spaces $\mathcal{M}_{0}([p,w'], [l,w])$ and $\mathcal{M}_{1}([p,w'], [l,w])$, shown in Figure 10. In particular, the terms in $(1)$ correspond to
$$
\Phi_{+, 0}\delta^{\rho}_{int}+ \delta^{\rho}\Phi_{+, 0}
$$
which are from the boundary components of $\mathcal{M}_{0}([p,w'], [l,w])$. Hence the sum, weighted by $T^{\tilde{E}_{+}}$, vanishes. Similarly the terms in $(2)$ correspond to
$$
\Phi_{+, 1}\delta^{\rho}_{int}+ \delta^{\rho}\Phi_{+, 1}
$$
and the terms in $(3)$ correspond to
$$
\Phi_{+, 0}\partial_{K, int}+ \partial_{K}\Phi_{+, 0}.
$$
Therefore the sum of these four terms, weighted by $T^{\tilde{E}_{+}+v(K)}$, vanishes. In conclusion we have that the sum of these eight terms in (\ref{chain map}) is zero and $\Phi_{+}$ is a chain map. In the same way we can construct
$$
\Phi_{-}=T^{\tilde{E}_{-}}(\Phi_{-, 0}+ \Phi_{-, 1})
$$
as a chain map by a chosen cut-off function $\chi_{-}$. Here
$$
\tilde{E}_{-}=-\int_{0}^{1} \min_{X} G_{t}dt +\norm{G_{t}}_{S}.
$$
Then $\Phi_{\pm}$ induce maps on the cohomology level, which we still write as $\Phi_{\pm}$.
\end{proof}

Next we construct chain homotopy maps such that $\Phi_{-}\circ\Phi_{+}$ is chain homotopic to some inclusion-induced map.

\begin{figure}
  \begin{center}
  \begin{tikzpicture}[xscale=1, yscale=0.8]
  \draw (0,1) to [out=0,in=180] (1,0);
  \draw (0,0) to [out=0,in=180] (1,1);
  \draw (3,1) to [out=0,in=180] (4,0);
  \draw (3,0) to [out=0,in=180] (4,1);
  \draw (1,1)--(3,1);
  \draw (1,0)--(3,0);
  \node at (2,0.5) {$\chi_{N}$};
  \draw [->] (4.5,0.5)--(5,0.5);
  \draw (5.5,1) to [out=0,in=180] (6,0);
  \draw (5.5,0) to [out=0,in=180] (6,1);
  \draw (6,1) to [out=0,in=180] (6.5,0);
  \draw (6,0) to [out=0,in=180] (6.5,1);
  \draw (6.5,1) to [out=0,in=180] (7,0);
  \draw (6.5,0) to [out=0,in=180] (7,1);
  \node at (6.5,0.5) {\small{$\chi_{N}$}};
  \node at (7.5,0.5) {$+$};
  \draw (8,1) to [out=0,in=180] (8.5,0);
  \draw (8,0) to [out=0,in=180] (8.5,1);
  \draw (8.5,1) to [out=0,in=180] (9,0);
  \draw (8.5,0) to [out=0,in=180] (9,1);
  \draw (9,1) to [out=0,in=180] (9.5,0);
  \draw (9,0) to [out=0,in=180] (9.5,1);
  \node at (8.5,0.5) {\small{$\chi_{N}$}};
  \node [right] at (10,0.5) {$(2)=\mathfrak{f}_{0}\delta^{\rho}+ \delta^{\rho}\mathfrak{f}_{0}$};
  \draw (5.5,2.5) to [out=0,in=180] (6.5,1.5);
  \draw (5.5,1.5) to [out=0,in=180] (6.5,2.5);
  \draw (8.5,2.5) to [out=0,in=180] (9.5,1.5);
  \draw (8.5,1.5) to [out=0,in=180] (9.5,2.5);
  \draw (6.5,2.5)--(7.4,2.5);
  \draw (7.6,2.5)--(8.5,2.5);
  \draw (6.5,1.5)--(7.4,1.5);
  \draw (7.6,1.5)--(8.5,1.5);
  \draw [dashed] (7.4,2.5)--(7.4,1.5);
  \draw [dashed] (7.6,2.5)--(7.6,1.5);
  \node at (6.8,2) {$\chi_{+}$};
  \node at (8.2,2) {$\chi_{-}$};
  \node [right] at (10,2) {$(1)=\Phi_{-,0}\Phi_{+,0}$};
  \draw (5.5,-0.5) to [out=0,in=180] (6.5,-1.5);
  \draw (5.5,-1.5) to [out=0,in=180] (6.5,-0.5);
  \draw (8.5,-0.5) to [out=0,in=180] (9.5,-1.5);
  \draw (8.5,-1.5) to [out=0,in=180] (9.5,-0.5);
  \draw (6.5,-0.5)--(8.5,-0.5);
  \draw (6.5,-1.5)--(8.5,-1.5);
  \node at (7.5,-1) {$\chi_{0}$};
  \node [right] at (10,-1) {$(3)=\mathfrak{i}_{0}$};
  \end{tikzpicture}
  \end{center}
  \caption{Degenerations in $\mathcal{M}_{0}^{para}$.}
\end{figure}
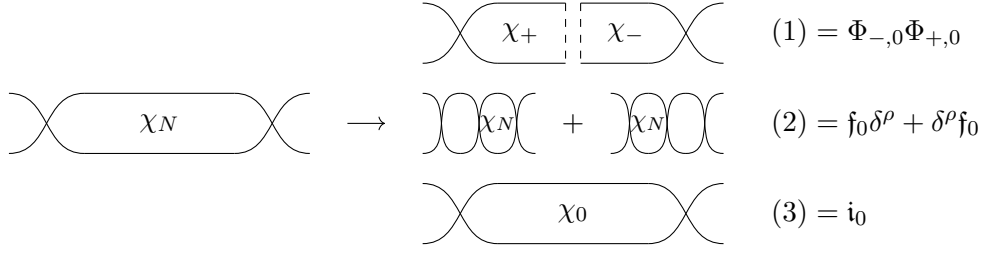

\begin{proposition}\label{chain homotopy}
With the same notations in the previous proposition, the composition
$$
\Phi_{-}\circ\Phi_{+}: HF_{int, cy}((L, \rho), (\varphi(L), \rho); K)\rightarrow HF_{int, cy}((L, \rho), (\varphi(L), \rho); K)
$$
equals the inclusion-induced map
$$
\mathfrak{i}=T^{E'}(\mathfrak{i}_{0}+ \mathfrak{i}_{1}): HF_{int, cy}((L, \rho), (\varphi(L), \rho); K)\rightarrow HF_{int, cy}((L, \rho), (\varphi(L), \rho); K).
$$
Here $E'=\tilde{E}_{+}+\tilde{E}_{-}=\norm{G_{t}}_{X}+2\norm{G_{t}}_{S}$.
\end{proposition}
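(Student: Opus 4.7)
The plan is to construct a chain homotopy $\mathfrak{f}$ between $\Phi_{-}\circ\Phi_{+}$ and the inclusion-induced map $\mathfrak{i}$ by counting solutions of the perturbed Cauchy-Riemann equation over the one-parameter family of cut-off functions $\chi_{N}$ introduced in Section 5.1. For each $N\in[0,\infty)$ one has a moduli space $\mathcal{M}^{N}_{0}(p,q)$ of strip solutions with cut-off $\chi_{N}$, and a moduli space $\mathcal{M}^{N}_{1}(p,q)$ obtained by gluing strips with one interior marked point on $K$ to strips with one interior hole on $S$, exactly as in the definition of $\partial_{K}$. Taking the union $\mathcal{M}^{para}_{j}=\bigcup_{N\geq 0}\mathcal{M}^{N}_{j}\times\{N\}$ for $j=0,1$ gives parameterized moduli spaces of one dimension higher. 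I then define $\mathfrak{f}=T^{E'}(\mathfrak{f}_{0}+\mathfrak{f}_{1})$ by the signed count of the zero-dimensional components of $\mathcal{M}^{para}_{0}$ and $\mathcal{M}^{para}_{1}$ respectively, with the latter weighted additionally by $T^{v(K)}$.

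The energy weight $T^{E'}$ is forced by the filtration bookkeeping coming from Lemma 5.10: for a $\chi_{N}$-perturbed solution with an interior hole one has
\begin{equation*}
E_{(J,\chi_{N},\tilde{G}_{t})}(u)=\int u^{*}\omega-\int_{-\infty}^{\infty}\chi_{N}'(\tau)\int_{0}^{1}\tilde{G}_{t}(u)\,dt\,d\tau+\int_{C(\epsilon)}\tilde{G}_{t}(u),
\end{equation*}
so the maximal energy loss over all $N\geq 0$ is bounded by $\int_{0}^{1}(\max_{X}G_{t}-\min_{X}G_{t})\,dt+2\norm{G_{t}}_{S}=\norm{G_{t}}_{X}+2\norm{G_{t}}_{S}=E'$. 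The same bound applies uniformly to the strip-with-hole part because the hole contributes at most $2\epsilon\norm{G_{t}}_{S}\leq\norm{G_{t}}_{S}$ as in the proof of Lemma 5.8. Hence multiplying by $T^{E'}$ produces a well-defined $\Lambda_{0}$-linear map modulo $T^{2E}$.

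To verify the chain homotopy relation $\mathfrak{f}\,d^{\rho}_{K,int}+d^{\rho}_{K,int}\,\mathfrak{f}\equiv\Phi_{-}\circ\Phi_{+}-\mathfrak{i}\pmod{T^{2E}}$, I analyze the codimension-one boundary of $\mathcal{M}^{para}_{0}$ and $\mathcal{M}^{para}_{1}$. Three kinds of strata appear, as illustrated in Figure 11: (1) the limit $N\to\infty$ in which the cut-off $\chi_{N}$ splits into a concatenation of $\chi_{+}$ at $-\infty$ and $\chi_{-}$ at $+\infty$, producing the composition $\Phi_{-,i}\Phi_{+,j}$ after the usual neck-stretching/gluing argument; (2) interior strip-breaking, which contributes $\mathfrak{f}\circ d^{\rho}_{K,int}+d^{\rho}_{K,int}\circ\mathfrak{f}$; and (3) the limit $N\to 0$, at which $\chi_{N}\equiv 0$ and the equation reduces to the unperturbed holomorphic strip (resp.\ strip-with-hole) equation, giving exactly $\mathfrak{i}_{0}$ and $\mathfrak{i}_{1}$. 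Bubbling off of disks, spheres and annuli is excluded by Condition 1.1 and the argument of Proposition 5.4; the new degeneration in which the interior hole of a strip approaches a line boundary is ruled out by $S\cap L=\emptyset$ and $S\cap\varphi(L)=\emptyset$ together with the exponential-decay argument of Proposition 5.3(1); and the domain degeneration where the hole shrinks to a point is exactly the common boundary that is glued away in the definition of $\mathfrak{f}_{1}$.

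The main obstacle is managing simultaneously the two-parameter degenerations in $\mathcal{M}^{para}_{1}$: the cut-off parameter $N$ can degenerate together with the conformal parameter $(\epsilon,r)$ of the domain, and the compactness argument must separate these degenerations cleanly so that each codimension-one stratum is identified with one of the three terms above. In particular one must check that the $N\to\infty$ limit of a hole-carrying strip produces either a broken configuration of a $\chi_{+}$-strip and a $\chi_{-}$-strip in which exactly one piece carries the hole (contributing $\Phi_{-,0}\Phi_{+,1}+\Phi_{-,1}\Phi_{+,0}$ after weighting by $T^{v(K)}$), and that all other combinations are excluded by the energy filtration modulo $T^{2E}$ since $2v(K)\geq 2E$. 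Once this bookkeeping is complete, passing to cohomology gives the claimed equality.
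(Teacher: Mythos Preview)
Your proposal is correct and follows essentially the same argument as the paper: you build the chain homotopy $\mathfrak{f}=\mathfrak{f}_{0}+\mathfrak{f}_{1}$ from the parametrized moduli spaces $\mathcal{M}^{para}_{0}$ and $\mathcal{M}^{para}_{1}$ over $N\in[0,\infty)$, and you identify the same three boundary types ($N\to\infty$ giving $\Phi_{-}\circ\Phi_{+}$, strip-breaking giving $d\mathfrak{f}+\mathfrak{f}d$, and $N\to0$ giving $\mathfrak{i}$), together with the energy argument killing $\Phi_{-,1}\Phi_{+,1}$, $\partial_{K,int}\mathfrak{f}_{1}$, and $\mathfrak{f}_{1}\partial_{K,int}$ modulo $T^{2E}$. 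The paper organizes the remaining eleven terms via Figures 11 and 12 (four strata for $\mathcal{M}^{para}_{0}$ and seven for $\mathcal{M}^{para}_{1}$), which is exactly the bookkeeping you sketch in your last paragraph; one small correction is that the energy loss for a single $\chi_{N}$-solution is bounded by $\norm{G_{t}}_{X}+\norm{G_{t}}_{S}$ rather than $\norm{G_{t}}_{X}+2\norm{G_{t}}_{S}$, though $E'$ is still the correct weight on the homotopy since it must match the composite $\Phi_{-}\circ\Phi_{+}$.
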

\begin{proof}
The chain homotopy maps are constructed by using the perturbed Cauchy-Riemann equation with cut-off function $\chi_{N}$. Consider the one-parameter moduli spaces
$$
\widetilde{\mathcal{M}}_{0}^{para}=\bigcup_{N\in [0, +\infty)} \lbrace N\rbrace \times \mathcal{M}_{0}^{N}(p, q)
$$
and
$$
\widetilde{\mathcal{M}}_{1}^{para}=\bigcup_{N\in [0, +\infty)} \lbrace N\rbrace \times \mathcal{M}_{1}^{N}(p, q)
$$
parameterized by $N$. Here the moduli space $\mathcal{M}_{0}^{N}(p, q)$ contains solutions of the perturbed Cauchy-Riemann equation with cut-off function $\chi_{N}$ where the domain is a genuine strip. The moduli space $\mathcal{M}_{1}^{N}(p, q)$ contains solutions of the perturbed Cauchy-Riemann equation with cut-off function $\chi_{N}$ where the domain is a strip with one interior hole. We only consider the case that the above solutions represents the class of the constant map. The energy estimate in Lemma \ref{est3} tells that for a solution $u$ in $\mathcal{M}_{0}^{N}(p, q)$ or $\mathcal{M}_{1}^{N}(p, q)$, we always have that
$$
\begin{aligned}
E_{(J, \chi_{N}(\tau), G_{t})}(u) = &\int u^{*}\omega -\int_{-\infty}^{\infty}\chi_{N}'(\tau)\int_{0}^{1}G_{t}(u)dtd\tau -\int_{C(\epsilon)}G_{t}(u)\\
\leq &\int u^{*}\omega+ \norm{G_{t}}_{X}+ \norm{G_{t}}_{S}
\end{aligned}
$$
which is uniformly bounded from above, independent of $N$. Then we can compactify $\widetilde{\mathcal{M}}_{0}^{para}$ and $\widetilde{\mathcal{M}}_{1}^{para}$ to obtain $\mathcal{M}_{0}^{para}$ and $\mathcal{M}_{1}^{para}$, by adding possible broken curves. In particular, we deal with the codimension one boundary from domain degenerations in $\mathcal{M}_{1}^{N}(p, q)$ by gluing it with the moduli space where the domain is a strip with one interior marked point, as we did before.

Now we only consider the case where $p=q$ and the solutions represent the class of a constant map. Under transversality assumptions, both of the moduli spaces $\mathcal{M}_{0}^{para}(p, q)$ and $\mathcal{M}_{1}^{para}(p, q)$ have dimension one in this case. Next we study the boundary of the these two moduli spaces. By similar argument before, we exclude disk bubbles, sphere bubbles and annulus bubbles. Then the boundary components of $\mathcal{M}_{0}^{para}(p, p)$ have four types of degenerations (listed in Figure 11) and the boundary components of $\mathcal{M}_{1}^{para}(p, p)$ have seven types of degenerations (listed in Figure 12). There is another type of degenerations in $\mathcal{M}_{1}^{para}(p, p)$, where the interior circle shrinks to a point. We deal with it by using the same strategy as before, gluing this boundary component with the boundary of moduli space with one interior marked point. Hence we omit it in Figure 12.

Now we look at the chain homotopy equation
\begin{equation}
\Phi_{-}\circ\Phi_{+}- \mathfrak{i}= d^{\rho}_{K, int}\mathfrak{f}+ \mathfrak{f}d^{\rho}_{K, int}
\end{equation}
where
$$
\begin{aligned}
& \Phi_{+}=T^{\tilde{E}_{+}}(\Phi_{+,0}+ \Phi_{+,1});\\
& \Phi_{-}=T^{\tilde{E}_{-}}(\Phi_{-,0}+ \Phi_{-,1});\\
& \mathfrak{i}=T^{E'}(\mathfrak{i}_{0}+ \mathfrak{i}_{1});\\
& \mathfrak{f}=\mathfrak{f}_{0}+ \mathfrak{f}_{1};\\
& d^{\rho}_{K, int}=\delta^{\rho}+ \partial_{K, int}.
\end{aligned}
$$
We explain corresponding moduli spaces to construct the operators as follows. These operators $\Phi_{+,0}, \Phi_{+,1}, \Phi_{-,0}, \Phi_{-,1}$ are chain maps defined in the previous proposition. The operator $d^{\rho}_{K, int}=\delta^{\rho}+ \partial_{K, int}$ is the differential to define the cohomology. Operators $\mathfrak{f}_{0}, \mathfrak{f}_{1}$ will be defined as chain homotopy maps between $\Phi_{-}\circ\Phi_{+}$ and $\mathfrak{i}$.

All four operators $\mathfrak{f}_{0}, \mathfrak{f}_{1}, \mathfrak{i}_{0}, \mathfrak{i}_{1}$ are defined from $CF_{int, cy}((L, \rho), (\varphi(L), \rho); K)$ to itself, by using the perturbed Cauchy-Riemann equation with a bump function $\chi_{N}$. Their difference is from different domains and different bump functions. For $\mathfrak{i}_{0}$, the domain is a strip and the bump function is $\chi_{N}=\chi_{0}\equiv 0$. For $\mathfrak{i}_{1}$, the domain is a strip with one interior hole and the bump function is $\chi_{N}=\chi_{0}\equiv 0$. For $\mathfrak{f}_{0}$, the domain is a strip and the bump function is $\chi_{N}$. For $\mathfrak{f}_{1}$, the domain is a strip with one interior hole and the bump function is $\chi_{N}$.

Since our asymptotic conditions are the same element $p$, the operator $\mathfrak{i}_{0}$ is the identity map, which comes from the ``zero end'' moduli space $\mathcal{M}_{0}^{0}(p, p)$ as a boundary of $\mathcal{M}_{0}^{para}$. Note that when $p=q$ and $\chi_{N}=\chi_{0}\equiv 0$ the only element in $\mathcal{M}_{0}^{0}(p, p)$ is the constant map. Similarly, the only element in $\mathcal{M}_{1}^{0}(p, p)$ is the constant map. But we have the boundary condition that the interior hole is mapped to $S$, which does not intersect $L$. Hence such a degeneration will not happen. We formally define the operator $\mathfrak{i}_{1}$ as zero.

So in the full expansion of the chain homotopy equation there are 14 terms. The following three terms
$$
\partial_{K, int}\mathfrak{f}_{1}, \quad \mathfrak{f}_{1}\partial_{K, int}, \quad T^{E'}\Phi_{-,1}\Phi_{+,1} \equiv 0\mod T^{2v(K)} \quad (\text{hence} \equiv 0\mod T^{E})
$$
by energy reason. And the remaining 11 terms correspond to the 11 types of degenerations in the moduli spaces $\mathcal{M}_{0}^{para}(p, p)$ and $\mathcal{M}_{1}^{para}(p, p)$, which form the boundary components of two compact one-dimensional manifolds. Therefore we proved the chain homotopy property.
\end{proof}

\begin{figure}
  \begin{center}
  \begin{tikzpicture}[xscale=1, yscale=0.8]
  \draw (0,1) to [out=0,in=180] (1,0);
  \draw (0,0) to [out=0,in=180] (1,1);
  \draw (3,1) to [out=0,in=180] (4,0);
  \draw (3,0) to [out=0,in=180] (4,1);
  \draw (1,1)--(3,1);
  \draw (1,0)--(3,0);
  \node at (1,0.5) {$\chi_{N}$};
  \draw (2,0.5) circle [radius=0.2];
  \draw [->] (4.5,0.5)--(5,0.5);
  \draw (5.5,2.5) to [out=0,in=180] (6.5,1.5);
  \draw (5.5,1.5) to [out=0,in=180] (6.5,2.5);
  \draw (8.5,2.5) to [out=0,in=180] (9.5,1.5);
  \draw (8.5,1.5) to [out=0,in=180] (9.5,2.5);
  \draw (6.5,2.5)--(7.4,2.5);
  \draw (7.6,2.5)--(8.5,2.5);
  \draw (6.5,1.5)--(7.4,1.5);
  \draw (7.6,1.5)--(8.5,1.5);
  \draw [dashed] (7.4,2.5)--(7.4,1.5);
  \draw [dashed] (7.6,2.5)--(7.6,1.5);
  \node at (6.5,2) {\small{$\chi_{+}$}};
  \draw (6.9,2) circle [radius=0.15];
  \node at (8.2,2) {$\chi_{-}$};
  \node [right] at (10,2) {$(4')=\Phi_{-,0}\Phi_{+,1}$};
  \draw (5.5,4) to [out=0,in=180] (6.5,3);
  \draw (5.5,3) to [out=0,in=180] (6.5,4);
  \draw (8.5,4) to [out=0,in=180] (9.5,3);
  \draw (8.5,3) to [out=0,in=180] (9.5,4);
  \draw (6.5,4)--(7.4,4);
  \draw (7.6,4)--(8.5,4);
  \draw (6.5,3)--(7.4,3);
  \draw (7.6,3)--(8.5,3);
  \draw [dashed] (7.4,4)--(7.4,3);
  \draw [dashed] (7.6,4)--(7.6,3);
  \node at (8.5,3.5) {\small{$\chi_{-}$}};
  \draw (8,3.5) circle [radius=0.15];
  \node at (6.8,3.5) {$\chi_{+}$};
  \node [right] at (10,3.5) {$(4)=\Phi_{-,1}\Phi_{+,0}$};
  \draw (5.5,1) to [out=0,in=180] (6,0);
  \draw (5.5,0) to [out=0,in=180] (6,1);
  \draw (6,1) to [out=0,in=180] (6.5,0);
  \draw (6,0) to [out=0,in=180] (6.5,1);
  \draw (6.5,1) to [out=0,in=180] (7,0);
  \draw (6.5,0) to [out=0,in=180] (7,1);
  \draw (6,0.5) circle [radius=0.1];
  \node at (6.5,0.5) {\small{$\chi_{N}$}};
  \node at (7.5,0.5) {$+$};
  \draw (8,1) to [out=0,in=180] (8.5,0);
  \draw (8,0) to [out=0,in=180] (8.5,1);
  \draw (8.5,1) to [out=0,in=180] (9,0);
  \draw (8.5,0) to [out=0,in=180] (9,1);
  \draw (9,1) to [out=0,in=180] (9.5,0);
  \draw (9,0) to [out=0,in=180] (9.5,1);
  \node at (8.5,0.5) {\small{$\chi_{N}$}};
  \draw (9,0.5) circle [radius=0.1];
  \node [right] at (10,0.5) {$(5)=\mathfrak{f}_{0}\partial_{K, int}+ \partial_{K, int}\mathfrak{f}_{0}$};
  \draw (5.5,-0.5) to [out=0,in=180] (6,-1.5);
  \draw (5.5,-1.5) to [out=0,in=180] (6,-0.5);
  \draw (6,-0.5) to [out=0,in=180] (6.5,-1.5);
  \draw (6,-1.5) to [out=0,in=180] (6.5,-0.5);
  \draw (6.5,-0.5) to [out=0,in=180] (7,-1.5);
  \draw (6.5,-1.5) to [out=0,in=180] (7,-0.5);
  \draw (6,-0.8) circle [radius=0.1];
  \node at (6,-1.2) {\tiny{$\chi_{N}$}};
  \node at (7.5,-1) {$+$};
  \draw (8,-0.5) to [out=0,in=180] (8.5,-1.5);
  \draw (8,-1.5) to [out=0,in=180] (8.5,-0.5);
  \draw (8.5,-0.5) to [out=0,in=180] (9,-1.5);
  \draw (8.5,-1.5) to [out=0,in=180] (9,-0.5);
  \draw (9,-0.5) to [out=0,in=180] (9.5,-1.5);
  \draw (9,-1.5) to [out=0,in=180] (9.5,-0.5);
  \draw (9,-0.8) circle [radius=0.1];
  \node at (9,-1.2) {\tiny{$\chi_{N}$}};
  \node [right] at (10,-1) {$(5')=\delta^{\rho}\mathfrak{f}_{1}+ \mathfrak{f}_{1}\delta^{\rho}$};
  \draw (5.5,-2) to [out=0,in=180] (6.5,-3);
  \draw (5.5,-3) to [out=0,in=180] (6.5,-2);
  \draw (8.5,-2) to [out=0,in=180] (9.5,-3);
  \draw (8.5,-3) to [out=0,in=180] (9.5,-2);
  \draw (6.5,-2)--(8.5,-2);
  \draw (6.5,-3)--(8.5,-3);
  \node at (7,-2.5) {$\chi_{0}$};
  \draw (8,-2.5) circle [radius=0.2];
  \node [right] at (10,-2.5) {$(6)=\mathfrak{i}_{1}$};
  \end{tikzpicture}
  \end{center}
  \caption{Degenerations in $\mathcal{M}_{1}^{para}$.}
\end{figure}

\begin{remark}\label{analytic}
The above two propositions are proved assuming some analytic results. First, Condition \ref{condition} is necessarily used to exclude disk and annulus bubbles. Moreover, the regularity of the moduli spaces of perturbed Cauchy-Riemann equations is assumed. When the domain is a genuine strip this moduli space is discussed in \cite{FOOO4}. And we expect the same analytic argument therein can be applied here when the domain has one interior hole. In particular, we further expect that, under Condition \ref{condition}, transversality of related moduli spaces can be achieved by using domain-dependent almost complex structures or by using fixed almost complex structures via virtual perturbation.
\end{remark}

\subsection{Relations among three deformed Floer complexes}
So far we defined three complexes to describe a new version of deformed Floer cohomology, with a bulk deformation $\mathfrak{b}=wK$. For the first one, the disk model,
$$
HF_{cy}(L; (\mathfrak{b}, \rho))
$$
the underlying complex is the singular cohomology of $L$ and the differential counts holomorphic disks and holomorphic annuli, twisted by a local system $\rho$. The second one, the intersection model,
$$
HF_{int, cy}((L, \rho), (\varphi(L), \rho); K)
$$
and the third one, the chord model,
$$
HF_{cy}((L, \rho), (\varphi(L), \rho), G_{t}; K)
$$
are defined by first choosing suitable $(H_{t}, \varphi)$ and $(G_{t}, \phi)$ then counting holomorphic strips with a possible interior hole. For the genuine Floer cohomology with bulk deformations, it is known that these three cohomology theories are equivalent over the Novikov field $\Lambda$ (Proposition 8.24 \cite{FOOO2}) and have a good Lipschitz property over the Novikov ring $\Lambda_{0}$ (Theorem 6.2 \cite{FOOO4}). Now we will discuss the relations among these three models in our setting.

The disk model, of which the cohomology is determined by the potential function, is used for concrete computation once we know the potential function. The displacement results are given by the change of torsion exponents of the chord model, where large Hamiltonian perturbation is allowed. And to connect these two models we need the intersection model, where only small Hamiltonian perturbation is considered.

\begin{proposition}\label{cri}
Suppose that the potential function $\mathfrak{PO}^{cy, \mathfrak{b}}(\rho)$ for $L$ has a critical point for some $(\mathfrak{b}, \rho)$ modulo $T^{E'}$, $E'\leq E$. If there is a Hamiltonian $G_{t}$ with time-one flow $\phi$ such that $L\cap \phi(L)=\emptyset$ then it satisfies that $\norm{G_{t}}_{X}+ 2\norm{G_{t}}_{S}\geq E'$.
\end{proposition}
\begin{proof}
First the existence of the critical point shows that
$$
HF_{cy}(L; (\mathfrak{b}, \rho))\cong H^{*}(L; \dfrac{\Lambda_{0}}{T^{E'}\Lambda_{0}})\cong (\dfrac{\Lambda_{0}}{T^{E'}\Lambda_{0}})^{\oplus 8}\neq \lbrace 0\rbrace
$$
by Proposition \ref{decom}.

Next we choose a $C^{2}$-small $(H_{t}, \varphi)$ such that $L\cap \varphi(L)$ is transversal. Then the cohomology
$$
HF_{int, cy}((L, \rho), (\varphi(L), \rho); K)
$$
is well-defined for $(\mathfrak{b}=wK, \rho)$. We can construct chain maps between the two theories $HF_{cy}(L; (\mathfrak{b}, \rho))$ and $HF_{int, cy}((L, \rho), (\varphi(L), \rho); K)$. In the case of genuine Floer cohomology with bulk deformations, the chain maps are constructed in Section 8 \cite{FOOO2}. So we combine the proof therein with the special case when the domain has one interior hole in the previous subsection, to get the chain maps and chain homotopies with new energy estimates. Note that $H_{t}$ is $C^{2}$-small, there exists a $J'$ for $\varphi(L)$ to satisfy Condition \ref{condition} and $J'$ is in the same component with $J$ which makes $L$ satisfy Condition \ref{condition}. Moreover $\varphi(L)\cap S$ is empty. Hence the discussion in previous subsections all works. Then we obtain that
$$
HF_{int, cy}((L, \rho), (\varphi(L), \rho); K)\cong \bigoplus_{i=1}^{8} (\dfrac{\Lambda_{0}}{T^{E_{i}}\Lambda_{0}})
$$
where $\abs{E'-E_{i}}< \norm{H_{t}}_{X}+2\norm{H_{t}}_{S}$ for all $i$. That is, under the small perturbation $H_{t}$ the torsion exponents are also slightly perturbed, by the amount of some Hofer norms.

Therefore we have transited from the disk model to the intersection model. Next the estimates in previous subsection help us to transit from the intersection model to the chord model, where large Hamiltonian perturbation is allowed. Suppose that there is a Hamiltonian $G_{t}$ with time-one flow $\phi$ such that $L\cap \phi(\varphi(L))=\emptyset$. From the definition we know that
$$
HF_{cy}((L, \rho), (\varphi(L), \rho), G_{t}; K)=\lbrace 0\rbrace
$$
and $\Phi_{+}=\Phi_{-}=0$. Proposition \ref{chain homotopy} tells that
$$
\Phi_{-}\circ\Phi_{+}: HF_{int, cy}((L, \rho), (\varphi(L), \rho); K)\rightarrow HF_{int, cy}((L, \rho), (\varphi(L), \rho); K)
$$
equals the inclusion-induced map
$$
T^{E_{0}}(\mathfrak{i}_{0}+\mathfrak{i}_{1}): HF_{int, cy}((L, \rho), (\varphi(L), \rho); K)\rightarrow HF_{int, cy}((L, \rho), (\varphi(L), \rho); K)
$$
where $E_{0}=\norm{G_{t}}_{X}+2\norm{G_{t}}_{S}$. Therefore we have that
$$
0=T^{E_{0}}(\mathfrak{i}_{0}+ \mathfrak{i}_{1}): HF_{int, cy}((L, \rho), (\varphi(L), \rho); K)\rightarrow HF_{int, cy}((L, \rho), (\varphi(L), \rho); K).
$$
So $E_{0}> \max_{i}\lbrace E_{i}\rbrace$ for all $i$. Let $\norm{H_{t}}\rightarrow 0$ we obtain that $\norm{G_{t}}_{X}+ 2\norm{G_{t}}_{S}\geq E'$.

In conclusion, for any Hamiltonian diffeomorphism $\psi$ which displaces $L$ there is a small amount $\epsilon_{\psi} >0$ such that for any pair $(H_{t}, \varphi)$ with $\norm{H_{t}}<\epsilon_{\psi}$, the diffeomorphism $\psi$ also displaces $\varphi(L)$ from $L$. Hence we can use those small $(H_{t}, \varphi)$ to do the above energy estimate for $\psi$, which completes the proof.
\end{proof}

The above theorem is parallel to Theorem 5.11 in \cite{FOOO1} for potential functions without bulk deformation and Theorem 7.7 in \cite{FOOO4} for potential functions with bulk deformation. We just adapt the proof therein by using our energy estimates in this section.

\section{Estimates of displacement energy}
In this section we estimate the displacement energy of a local torus. We fix a triple $(X, S, U)$ and a local torus $L$ inside $U$ as in Theorem \ref{dis}, such that $L$ satisfies Condition \ref{condition}. See Subsection 4.1 for the fibration structure on $U$ of which $L$ is a fiber. We assume that $S$ is integrally homologically trivial, and fix a 4-chain $K$ such that $\partial K=S$.

\subsection{First estimate for $\mathcal{E}_{L}$}
Let $L$ be a local torus, we will first show its displacement energy is greater than or equal to $E_{5}$. This is directly from the decomposition formula of the Floer cohomology, which do not need the bulk deformation by the chain $K$.

Assuming $(1)-(3)$ in Condition \ref{condition}, the one-pointed open Gromov-Witten invariant $n_{\beta}$ is defined, for any Maslov two disk class $\beta\in\pi_{2}(X, L)$ with energy less than $E_{+}$. We consider the sequence
$$
\lbrace \beta_{k}\mid n_{\beta}\neq 0, E(\beta_{k})\leq E(\beta_{k+1})\rbrace_{k=1}^{\infty}
$$
of disk classes with Maslov index two, enumerated by their symplectic energy. From the local study we know that $L_{\lambda}$ bounds four $J$-holomorphic disks with Maslov index two inside $U$, with same energy $E_{1}$. Those are the first four elements in the above sequence if $L$ is near $S$. Let $E_{5}=E(\beta_{5})$ be the least energy of outside disk contribution.

Without bulk deformation, the disk potential function is
$$
\mathfrak{PO}(\rho)=(x+y^{-1}+xz^{-1}+y^{-1}z) T^{E_{1}} \mod T^{E_{5}},
$$
which has a critical point at $\rho_{0}=(x=1, y=1, z=-1)$. Hence we have
$$
HF(L, \rho_{0}; \Lambda_{0})\cong (\bigoplus_{i=1}^{8} \dfrac{\Lambda_{0}}{T^{E}\Lambda_{0}}) \mod T^{E_{5}}
$$
by the decomposition formula. And Theorem J in \cite{FOOO} gives that $\mathcal{E}_{L}\geq E_{5}$.

\subsection{Second estimate for $\norm{G_{t}}_{X}+ 2\norm{G_{t}}_{S}$}
For the second estimate we will use the deformed Floer cohomology of a local torus, which was constructed in previous sections.

Now we assume that $L$ further satisfies Condition \ref{condition+}. From equation (\ref{po}), we have the following deformed potential function
\begin{equation}
\begin{aligned}
&\mathfrak{PO}^{cy, \mathfrak{b}}(\rho)\\
=&[(1+(1+n_{\beta_{1}}^{cy})w)x+ (1+n_{\beta_{2}}^{cy}w)y^{-1}+ (1+n_{\beta_{3}}^{cy}w)xz^{-1}+ (1+n_{\beta_{4}}^{cy}w)y^{-1}z]T^{E_{1}}+\\
+& \sum_{\mu(\beta)=2, \omega(\beta)=E_{1}, \beta\neq\beta_{i}}n_{\beta}^{cy}f_{\beta}(x,y,z)T^{E_{1}}+ H(w, x, y, z, T) \mod T^{E}
\end{aligned}
\end{equation}
where $H(w, x, y, z, T) $ are higher energy terms. Recall that $E:=\min\lbrace E_{S}+ v(w), 2v(w), E_{+}\rbrace$ for a Lagrangian torus satisfying Condition \ref{condition}.

By Proposition \ref{cri}, if the above function has a critical point modulo $T^{E'}$ with $E'<E$, then we have the estimate
$$
\norm{G_{t}}_{X}+ 2\norm{G_{t}}_{S}\geq E'.
$$
Hence it suffices to analyze the critical points of this deformed potential function. The idea is to find some critical points for the low energy terms, then apply an implicit function theorem to deduce the existence of critical points globally.

\begin{lemma}
Consider a vector-valued Laurent polynomial function
$$
F=(f_{1}, \cdots, f_{n}): \Lambda_{0}^{n}\rightarrow \Lambda_{0}^{n}; \quad f_{i}\in \Lambda_{0}[x_{1}^{\pm 1}, \cdots, x_{n}^{\pm 1}], \quad \forall 1\leq i\leq n.
$$
We assume that $F$ has a decomposition by the valuation on $\Lambda_{0}$
$$
F=F_{0}+ H, \quad v(F_{0})=0, \quad v(H)>\epsilon
$$
for some $\epsilon>0$. If $F_{0}=0$ has a nondegenerate solution at some point $(x_{1}, \cdots, x_{n})\in \mathbb{C}^{n}$ then $F=0$ has a solution at $(x_{1}', \cdots, x_{n}')\in \Lambda_{0}^{n}$. Moreover $(x_{1}, \cdots, x_{n})\equiv (x_{1}', \cdots, x_{n}') \mod T^{\epsilon}$.
\end{lemma}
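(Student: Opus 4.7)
The plan is to prove Lemma 6.3 by Newton iteration in the non-Archimedean setting, mimicking the classical implicit function theorem while exploiting the completeness of $\Lambda_0$ with respect to the valuation $v$. The ultrametric inequality actually makes this cleaner than in the Archimedean case, since there is no constant to battle in the contraction estimate.

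First I would normalize: by substituting $x_i \mapsto x_i + c_i$ with $(c_1,\dots,c_n)$ the given nondegenerate zero of $F_0$, the problem reduces to finding $y \in T^{\epsilon}\Lambda_0^n$ with $G(y) := F(y+c) = 0$. Note $G(0) = F(c) = F_0(c) + H(c) = H(c)$, so $v(G(0)) > \epsilon$. Nondegeneracy gives $A := DF_0(c) \in GL_n(\mathbb{C}) \subset GL_n(\Lambda_0)$, and for any $y$ with $v(y) > 0$ the matrix $DG(y)$ differs from $A$ by an entry in $\Lambda_+$, hence is invertible in $GL_n(\Lambda_0)$ by a geometric series.

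Next I would set up the Newton sequence
\begin{equation*}
y^{(0)} = 0, \qquad y^{(k+1)} = y^{(k)} - A^{-1} G(y^{(k)}),
\end{equation*}
and prove inductively that $v(G(y^{(k)}))$ strictly increases to $+\infty$. The key estimate is the Taylor-type expansion
\begin{equation*}
G(y^{(k+1)}) = G(y^{(k)}) + A\bigl(y^{(k+1)} - y^{(k)}\bigr) + R^{(k)} = R^{(k)},
\end{equation*}
where $R^{(k)}$ collects the quadratic and higher terms; since $G$ is a Laurent polynomial, $R^{(k)}$ is a polynomial expression in $y^{(k+1)} - y^{(k)}$ with no constant or linear part, so $v(R^{(k)}) \geq 2 v(y^{(k+1)} - y^{(k)}) = 2 v(G(y^{(k)}))$. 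Hence $v(G(y^{(k+1)})) \geq 2 v(G(y^{(k)}))$, and since the starting valuation exceeds $\epsilon > 0$, the sequence of valuations blows up.

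The third step is to verify convergence: the increments satisfy $v(y^{(k+1)} - y^{(k)}) = v(G(y^{(k)})) \to +\infty$, so $\{y^{(k)}\}$ is Cauchy in the non-Archimedean norm on $\Lambda_0^n$. By completeness of $\Lambda_0$, the limit $y^{(\infty)}$ exists in $\Lambda_0^n$, and by continuity of the polynomial $G$ we get $G(y^{(\infty)}) = 0$. Moreover $v(y^{(\infty)}) \geq \min_k v(y^{(k+1)} - y^{(k)}) \geq v(G(0)) > \epsilon$, which translates back to $(x_1,\dots,x_n) \equiv (x_1',\dots,x_n') \bmod T^{\epsilon}$.

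The main point requiring care is the quadratic-convergence bookkeeping in step two, specifically ensuring that the Jacobian correction $DG(y^{(k)}) - A$ does not spoil the estimate. This is handled by writing everything through the fixed invertible $A$ rather than through the variable Jacobian; the deviation contributes only to $R^{(k)}$ through a term of shape $(DG(y^{(k)}) - A)(y^{(k+1)} - y^{(k)})$, which has valuation $\geq v(y^{(k)}) + v(G(y^{(k)})) \geq 2v(G(y^{(k)}))$ as well. Everything else is routine manipulation in a complete non-Archimedean ring.
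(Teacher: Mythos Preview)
The paper does not actually give its own proof of this lemma; it simply states the result and refers the reader to Section 10 of \cite{FOOO1} for the argument. Your Newton--Hensel iteration is the standard approach and is essentially what one finds in that reference, so there is nothing to compare on the level of strategy.

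There is, however, a genuine slip in your bookkeeping. You claim the Jacobian correction $(DG(y^{(k)})-A)(y^{(k+1)}-y^{(k)})$ has valuation at least $v(y^{(k)})+v(G(y^{(k)}))\geq 2v(G(y^{(k)}))$. The second inequality requires $v(y^{(k)})\geq v(G(y^{(k)}))$, but the opposite holds: for $k\geq 1$ the ultrametric equality gives $v(y^{(k)})=v(\Delta y^{(0)})=v(G(0))$, while $v(G(y^{(k)}))$ is strictly larger once the iteration has run. Moreover $DG(y)-A=\bigl(DF_{0}(c+y)-DF_{0}(c)\bigr)+DH(c+y)$, and the second summand has valuation $\geq\epsilon'$ (the minimal coefficient valuation of $H$, which exists since $H$ is a finite Laurent polynomial) but not necessarily $\geq v(y)$. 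The correct estimate is $v(DG(y^{(k)})-A)\geq\min\bigl(v(y^{(k)}),\epsilon'\bigr)=\epsilon'$, giving $a_{k+1}\geq a_k+\min(\epsilon',a_k)=a_k+\epsilon'$ since $a_0\geq\epsilon'$. So you get linear rather than quadratic growth of the valuations, which is still enough for $a_k\to\infty$ and hence for convergence. A smaller point: after the substitution $x=c+y$ the function $G$ is a convergent power series in $y$ (coming from expanding $(c_i+y_i)^{-1}$), not a Laurent polynomial; this does not affect the estimate $v(R^{(k)})\geq 2v(\Delta y^{(k)})$ for the genuine quadratic remainder, but your sentence ``since $G$ is a Laurent polynomial'' should be adjusted.
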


This lemma is an implicit function theorem in the setting of the Novikov ring, see Section 10 in \cite{FOOO1} for a proof. Now we look at the critical points equation of the deformed potential function. Note that even in the low energy level, there are some unknown terms due to cylinder contributions. And we will use $(7), (8)$ in Condition \ref{condition+} to control these cylinder contributions. First, assuming Condition \ref{condition+} $(8)$ gives
$$
n_{\beta}^{cy}=0, \quad \text{if} \quad \mu(\beta)=2, \omega(\beta)<E_{5}, \beta\neq\beta_{i}.
$$
Then the deformed potential function becomes
$$
\begin{aligned}
&\mathfrak{PO}^{cy, \mathfrak{b}}(\rho)\\
=&[(1+(1+n_{\beta_{1}}^{cy})w)x+ (1+n_{\beta_{2}}^{cy}w)y^{-1}+ (1+n_{\beta_{3}}^{cy}w)xz^{-1}+ (1+n_{\beta_{4}}^{cy}w)y^{-1}z]T^{E_{1}}+\\
+&  H(w, x, y, z, T) \mod T^{E}.
\end{aligned}
$$
And the critical points equation will be
\begin{equation}\label{crieq}
\begin{aligned}
&0=\partial_{x}\mathfrak{PO}^{cy, \mathfrak{b}}(\rho)=[(1+(1+n_{\beta_{1}}^{cy})w)+ (1+n_{\beta_{3}}^{cy}w)z^{-1}]T^{E_{1}}+ \dfrac{\partial H}{\partial x} \mod T^{E}\\
&0=\partial_{y}\mathfrak{PO}^{cy, \mathfrak{b}}(\rho)=[-(1+n_{\beta_{2}}^{cy}w)y^{-2}- (1+n_{\beta_{4}}^{cy}w)y^{-2}z]T^{E_{1}} +\dfrac{\partial H}{\partial y} \mod T^{E}\\
&0=\partial_{z}\mathfrak{PO}^{cy, \mathfrak{b}}(\rho)=[-(1+n_{\beta_{3}}^{cy}w)xz^{-2}+ (1+n_{\beta_{4}}^{cy}w)y^{-1}]T^{E_{1}}+ \dfrac{\partial H}{\partial z} \mod T^{E}.
\end{aligned}
\end{equation}
We view (\ref{crieq}) as a system of three equations with four variables $(w, x, y, z)$ hence we have freedom to prescribe the value of one of the variables. So we set $x=1$ to these equations and view $w, y, z$ as variables. Then the low energy part of (\ref{crieq}) is equivalent to
\begin{equation}\label{lead}
\begin{aligned}
&0=(1+(1+n_{\beta_{1}}^{cy})w)+ (1+n_{\beta_{3}}^{cy}w)z^{-1}\\
&0=(1+n_{\beta_{2}}^{cy}w)+ (1+n_{\beta_{4}}^{cy}w)z\\
&0=-(1+n_{\beta_{3}}^{cy}w)z^{-2}+ (1+n_{\beta_{4}}^{cy}w)y^{-1}.
\end{aligned}
\end{equation}
Next we check that $w=0, y=1, z=-1$ is a solution of (\ref{lead}) with Jacobian determinant
$$
d= 1+n_{\beta_{1}}^{cy}+n_{\beta_{2}}^{cy}-n_{\beta_{3}}^{cy}-n_{\beta_{4}}^{cy}.
$$
Hence by assuming Condition \ref{condition+} $(7)$ that $d\neq 0$ then $w=0, y=1, z=-1$ becomes a non-degenerate solution. By the Gromov compactness theorem the higher energy part $H$ in the potential function is a Laurent polynomial since we work modulo $T^{E}$. (In general the potential function could be a Laurent series with energy going to infinity.) Hence our system of equations fits in the above lemma and the whole system of critical point equation has a suitable solution modulo $T^{E}$.

The solution $w=0, y=1, z=-1$ is just a solution to the low energy part (\ref{lead}). The implicit function theorem assures that there is a solution to (\ref{crieq}). In particular, this global solution looks like $w=0+w_{+}, y=1+y_{+}, z=-1+z_{+}$ with $w_{+}, y_{+}, z_{+} \in \Lambda_{+}$. Now we estimate $v(w)=v(w_{+})$ in order to estimate $E:=\min\lbrace E_{S}+ v(w), 2v(w), E_{+}\rbrace$.

The higher energy terms $H(w, x, y, z, T)$ contains contributions from disks and cylinders. By our assumption, there are no disk or cylinder contributions between energy $E_{1}$ and $E_{5}$. So we only need to perturb the higher energy terms with energy larger than or equal to $E_{5}$. The proof of the implicit function theorem is a step-by-step induction. In particular, a careful study of the construction of $w_{+}$ shows that
$$
w_{+}= c_{1}w_{1,+}+ c_{2}w_{2,+}+\cdots +c_{j}w_{j,+}, \quad c_{i}\in\mathbb{C}, w_{i,+}\in \Lambda_{+}, v(w_{i,+})<v(w_{i+1,+}).
$$
Since the first possible high energy term has energy $E_{5}$, the valuation $v(w_{1,+})=E_{5}-E_{1}$. If $c_{1}\neq 0$ we have that $v(w)=E_{5}-E_{1}$. If $c_{1}= 0$ we consider the next term $c_{2}w_{2,+}$. The extremal case is that all $c_{i}=0$. This means that we don't need to deform the potential function by the chain $wK$. Hence we can directly use the usual Floer cohomology with local systems to get the energy estimate. If some $c_{i}\neq 0$ we have $v(w)\geq E_{5}-E_{1}$.

Therefore we know that the critical points equation (\ref{crieq}) has a solution modulo $T^{E}$. By Proposition \ref{cri} we know that if $\phi$ displaces $L$ then its corresponding Hamiltonian functions $G_{t}$ satisfy that
$$
\norm{G_{t}}_{X}+ 2\norm{G_{t}}_{S}\geq E,
$$
which completes the proof of Theorem \ref{dis}.

Now we explain the proof of Corollary \ref{sphere}. Let $G_{t}$ be a time-dependent Hamiltonian function and $\phi$ be its time-one map such that $S\cap \phi(S)=\emptyset$. Then there is a small neighborhood $U$ which is also displaced by $\phi$. Note that for a small number $\lambda'$, all local tori $L_{\lambda}$ are contained in $U$ if $\lambda\in (0, \lambda')$ and are displaced by $\phi$. Therefore we know that
$$
\norm{G_{t}}_{X} \geq  E_{5,\lambda}, \quad \norm{G_{t}}_{X} +2\norm{G_{t}}_{S} \geq  2 (E_{5,\lambda}- E_{1,\lambda})
$$
for all $\lambda\in (0, \lambda')$. As $\lambda$ goes to zero, the energy $E_{1,\lambda}$ decreases and $E_{5,\lambda}$ increases hence we complete the proof of Corollary \ref{sphere}.

\subsection{Examples of displaceable Lagrangian spheres}
Now we briefly review Pabiniak's construction \cite{P} of displaceable Lagrangian 3-spheres and formally show that our theoretical estimate is almost optimal in this case. We remark that Condition \ref{condition+} is not verified for these examples, although we expect they satisfy it.

Consider the Lie group $SU(3)$. We identify the dual of its Lie algebra $\mathfrak{su}^{*}(3)$ with the vector space of $3\times 3$ traceless Hermitian matrices. Then the group $SU(3)$ acts on $\mathfrak{su}^{*}(3)$ by conjugation. Through a regular point $diag(a, b, -a-b)$, the action orbit $M$ is a smooth 6-dimensional symplectic manifold with the Kostant-Kirillov symplectic form.

We fix a regular point $diag(a, b, -a-b)$ with $a>b\geq 0$ and write the orbit as $M$. The symplectic form on $M$ is monotone if and only if $b=0$. There is a Gelfand-Tsetlin fibration $\Gamma: M\rightarrow \mathbb{R}^{3}$. For a matrix $A\in M$ let $a_{1}(A)\geq a_{2}(A)$ denote the two eigenvalues of the $2\times 2$ top left minor of $A$, and let $a_{3}(A)=a_{11}$ be the $(1,1)$ entry of $A$. Then the system $\Gamma(A)= (a_{1}(A), a_{2}(A), a_{3}(A))$ gives the fibration map. Let $(x,y,z)$ be the coordinates of $\mathbb{R}^{3}$. The image polytope (see Figure 13) of $\Gamma$ is given by affine functions
\begin{enumerate}
\item[] $a\geq x\geq b$;
\item[] $b\geq y\geq -a-b$;
\item[] $x\geq z\geq y$.
\end{enumerate}
This Gelfand-Tsetlin fibration $\Gamma$ can be viewed as a smooth torus fibration away from the fiber $\Gamma^{-1}(b,b,b)$ since the three functions $(a_{1}, a_{2}, a_{3})$ integrate to a 3-torus action. There is a unique non-smooth point $(b,b,b)$ in the polytope, of which the fiber $S=\Gamma^{-1}(b,b,b)$ is a smooth Lagrangian 3-sphere. So this fibration is a compactification of the fibration on $T^{*}S^{3}$ by putting divisors at infinity, see Section 4.1. And the parameters $a$ and $b$ measure the symplectic form on this compactification.

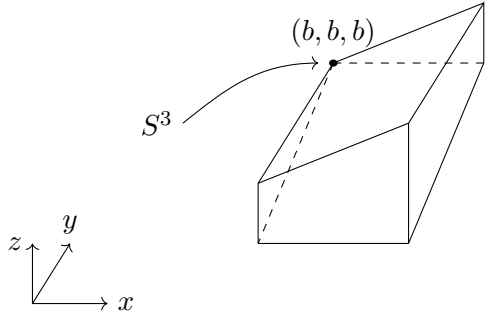
\begin{figure}
  \begin{tikzpicture}[xscale=1, yscale=0.8]
  \draw [dashed] (0,0)--(2,0);
  \draw (2,0)--(1,-3);
  \draw [dashed] (0,0)--(-1,-3);
  \draw (-1,-3)--(1,-3);
  \draw (0,0)--(-1,-2)--(-1,-3);
  \draw (2,0)--(2,1)--(1,-1)--(1,-3);
  \draw (1,-1)--(-1,-2);
  \draw (0,0)--(2,1);
  \draw [->] (-4,-4)--(-3.5,-3);
  \draw [->] (-4,-4)--(-3,-4);
  \draw [->] (-4,-4)--(-4,-3);
  \node [left] at (-4,-3) {$z$};
  \node [above] at (-3.5,-3) {$y$};
  \node [right] at (-3,-4) {$x$};
  \filldraw (0,0) circle [radius=0.05];
  \node at (0,0.5) {$(b,b,b)$};
  \draw [->] (-2,-1) to [in=180,out=45] (-0.2,0);
  \node [left] at (-2,-1) {$S^{3}$};
  \end{tikzpicture}
  \caption{Moment polytope for the fibration $\Gamma$.}
\end{figure}

Moreover we can consider the standard action of the maximal torus of $SU(3)$, which gives us a subaction of the Gelfand-Tsetlin action. This 2-torus action has a moment map $\mu: M\rightarrow \mathbb{R}^{2}$. We have the following commutative diagram
$$
\begin{tikzcd}
M \arrow{dr}{\mu} \arrow{r}{\Gamma} & \mathbb{R}^{3} \arrow{d}{pr}\\
                                    & \mathbb{R}^{2}
\end{tikzcd}
$$
where we view $\mathbb{R}^{2}=\lbrace x+y+z=0\rbrace\subset \mathbb{R}^{3}$. The projection map is given by
$$
pr(x, y, z)=(z, x+y-z, -x-y).
$$

Consider the permutation matrix
\[
P=\begin{bmatrix}
    -1       & 0 & 0 \\
    0       & 0 & 1 \\
    0       & 1 & 0
\end{bmatrix}
\]
which is an element of $SU(3)$. Then the conjugation with $P$ is a Hamiltonian action on $M$. Note that for $A=[a_{ij}]\in M$
$$
\mu(PAP^{-1})=(a_{11}, a_{33}, a_{22}).
$$
So we have that
$$
\mu(S)=\mu(\Gamma^{-1}(b,b,b))=pr(b,b,b)=(b,b,-2b)
$$
and
$$
\mu(PSP^{-1})=(b,-2b,b).
$$
In particular if $b\neq 0$ then the Lagrangian 3-sphere $S$ will be displaced by this group action. We also remark that when $b=0$ the Lagrangian 3-sphere $S$ is monotone and is proved to be nondisplaceable by Cho-Kim-Oh \cite{CKO}.

In \cite{NNU} it is calculated that $S$ bounds two holomorphic disks with energy $2\pi(a+2b)$ and $2\pi(a-b)$. Moreover the Floer cohomology $HF(S,S; \Lambda)$ vanishes. Next we assume that $b>0$ so that $2\pi(a+2b)> 2\pi(a-b)$. By Chekanov's theorem (see Main Theorem in \cite{Ch2}) the displacement energy $\mathcal{E}_{S}$ of $S$ is greater than $2\pi(a-b)$. For the Hamiltonian action by $P$, its corresponding Hamiltonian function is the inner product with the vector $diag(0,\pi,-\pi)$. That is, for a fiber $\Gamma^{-1}(x,y,z)$ over the point $(x,y,z)$ the Hamiltonian function is constant on the fiber and can be written as
$$
H(x,y,z)=(0,\pi,-\pi)\cdot pr(x,y,z)=\pi(2x+2y-z).
$$
From the polytope we can check that
$$
\max_{M} H=H(a,b,b)=\pi(2a+b), \quad \min_{M} H=H(b,-a-b,b)=\pi(-2a-b).
$$
Hence we have that
$$
\int_{0}^{1} (\max_{M}H- \min_{M}H) dt= 2\pi(2a+b).
$$
In particular $H\mid_{S}\equiv H(b,b,b)=3b$. So for this Hamiltonian we have that
$$
\norm{H}_{M}=2\pi(2a+b), \quad \norm{H}_{S}=0
$$
and
$$
\norm{H}_{M}+ 2\norm{H}_{S}=\norm{H}_{M}=2\pi(2a+b) \geq 2E_{5}:= \lim_{\lambda\rightarrow 0}2E_{5,\lambda}= 4\pi(a-b).
$$
This matches our theoretical prediction in Theorem \ref{dis}. And when $a\gg b\geq 0$ we have that $2\pi(2a+b)$ is close to $4\pi(a-b)$, which shows that the estimate is almost optimal in this case.

One can also check the case of the displaceable Lagrangian $S^{3}\subset \mathbb{C}^{2}\times \mathbb{C}P^{1}$. Consider the following Lagrangian embedding
$$
S^{3}\rightarrow \mathbb{C}^{2}\times \mathbb{C}P^{1}, \quad x \mapsto (i(x), -h(x))
$$
where $i$ is the inclusion of the unit sphere and $h$ is the Hopf map. The symplectic form on $\mathbb{C}^{2}\times \mathbb{C}P^{1}$ is the standard one times the Fubini-Study form. Let $H$ be a Hamiltonian on $\mathbb{C}^{2}$ which displaces the unit sphere and $G(z_{1}, z_{2}):=H(z_{1})$ be a Hamiltonian on $\mathbb{C}^{2}\times \mathbb{C}P^{1}$. Then $G$ displaces the Lagrangian sphere and $\norm{G}_{\mathbb{C}^{2}\times \mathbb{C}P^{1}}=\norm{H}_{\mathbb{C}^{2}}$. Moreover, it is known that $\norm{H}_{\mathbb{C}^{2}}$ can be chosen to be arbitrarily close to $\pi$.

However, the Hamiltonian $H$ takes maximal and minimal values on the unit sphere hence $G$ takes maximal and minimal values on the Lagrangian sphere $S$. So we have $\norm{G}_{\mathbb{C}^{2}\times \mathbb{C}P^{1}}= \norm{G}_{S}$. Note that
$$
H_{2}(\mathbb{C}^{2}\times \mathbb{C}P^{1}, S)\cong H_{2}(\mathbb{C}^{2}\times \mathbb{C}P^{1})\cong H_{2}(\mathbb{C}P^{1})
$$
hence the minimal energy of a holomorphic disk bounding $S$ is $\pi=\int_{\mathbb{C}P^{1}}\omega_{FS}$. And our estimate gives that $3\norm{G}_{\mathbb{C}^{2}\times \mathbb{C}P^{1}}\geq 2\pi$, which is not a contradiction but not very powerful for this example.

\bibliographystyle{amsplain}

\end{document}